\newtheorem{theorem}{Theorem}[section]
\newtheorem{lemma}[theorem]{Lemma}
\theoremstyle{definition}
\newtheorem{definition}[theorem]{Definition}
\newtheorem{example}[theorem]{Example}
\newtheorem{proposition}[theorem]{Proposition}
\theoremstyle{remark}
\newtheorem{remark}[theorem]{Remark}
\numberwithin{equation}{section}
\definecolor{JungleGreen}{rgb}{0.16, 0.67, 0.53}
\def\zh{\hat{0}}
\def\oneh{\hat{1}}
\begin{document}

% \title[short text for running head]{full title}
\title[Equivalences and Distinctions in Lexicographic Shellability]{Equivalences and Distinctions in Lexicographic Shellability of Posets}

%    Only \author and \address are required; other information is
%    optional.  Remove any unused author tags.

%    author one information
% \author[short version for running head]{name for top of paper}
\author{Stephen Lacina}
\address{}
\curraddr{}
\email{slacina@truman.edu}
\thanks{}

%    author two information
\author{Grace Stadnyk$^*$
}
\address{}
\curraddr{}
\email{grace.stadnyk@furman.edu}
\thanks{$^*$Support for this research was
provided by an AMS-Simons Research Enhancement Grant for Primarily Undergraduate Institution Faculty.}

%    \subjclass is required.
%\subjclass[2020]{Primary }

\date{}

\dedicatory{}

%    "Communicated by" -- provide editor's name; required.
\commby{}

%    Abstract is required.
\begin{abstract} We present two perhaps surprisingly small 
posets, one graded and one non-graded, that are CC-shellable in the sense of Kozlov and TCL-shellable in the sense of Hersh, but not CL-shellable in the sense of Bj{\"o}rner and Wachs. 
In the spirit of Bj{\"o}rner and Wachs' recursive atom orderings (RAO) and Hersh and Stadnyk's generalized recursive atom orderings (GRAO), we also introduce a notion called recursive first atom sets (RFAS). An RFAS is a set of conditions on the atoms of each interval in a finite bounded poset $P$ that are necessary for CC-shellability of $P$ and sufficient for shellability of $P$. We also prove that under an extra condition, $P$ is CC-shellable if and only if it admits an RFAS, in the same way that RAOs provide a reformulation of CL-shellability. 
\end{abstract}

\maketitle

%    Text of article.
\section{Introduction}\label{sec:intro}

\begin{center}
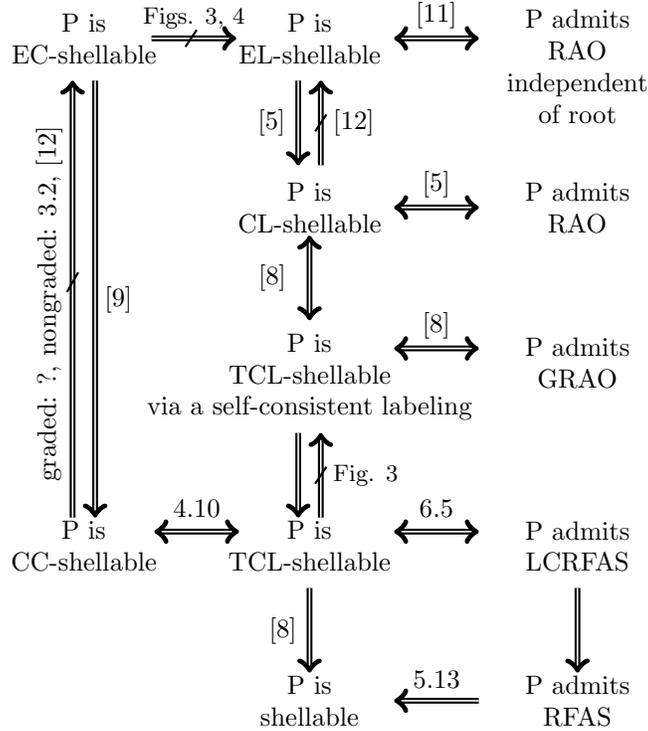
\begin{figure}
\begin{tikzpicture}[scale=0.75]
\draw (2, 0.25) node[align=center,minimum size=3cm] {P is \\ CC-shellable };
\draw (6, 0.25) node[align=center,minimum size=3cm] {P is \\ TCL-shellable };
\draw (6, 3.25) node[align=center,minimum size=3cm] {P is \\ TCL-shellable\\via a self-consistent labeling};
\draw (6, -2.5) node[align=center,minimum size=3cm] {P is \\shellable};
\draw (6, 6.25) node[align=center,minimum size=3cm] {P is \\ CL-shellable };
\draw (2, 9.25) node[align=center,minimum size=3cm] {P is \\ EC-shellable};
\draw (6, 9.25) node[align=center,minimum size=3cm] {P is \\ EL-shellable}; 
\draw (10.75, 0.25) node[align=center,minimum size=3cm] {P admits \\LCRFAS}; 
\draw (10.75, -2.5) node[align=center,minimum size=3cm] {P admits \\RFAS}; 
\draw (10.75, 3.5) node[align=center,minimum size=3cm] {P admits \\GRAO}; 
\draw (10.75, 6.25) node[align=center,minimum size=3cm] {P admits \\ RAO}; 
\draw (10.75, 8.75) node[align=center,minimum size=3cm] {P admits \\ RAO \\ independent \\ of root}; 
% \draw (15.5, 9.25) node[align=center,minimum size=3cm] {P admits \\ GRAO \\ independent \\ of root ?}; 

\draw [thick, double,<->] (3.25, 0.5)--node [align=center, minimum size=2cm, midway, above=-2em] {\ref{thm: tcl iff cc}}(4.75, 0.5); 
\draw [thick, double, ->](5.8, 8.5)--node[midway, left=0.1em]{\cite{bw}} (5.8, 7);
\draw [thick, double, ->](10.75, -0.5)--node[midway, left=0.1em]{} (10.75, -2);
\draw [thick, double, ->](6, -0.5)--node[midway, left=0.1em]{\cite{HerStad}} (6, -2);
\draw [thick, double, ->](9, -2.5)--node[midway, above=0.1em]{\ref{thm:RFAS implies shellable}} (7.5, -2.5);
\draw [thick, double, <-](6.2, 8.5)--node[midway, right=0.1em]{\cite{liclnotel}}(6.2, 7);
\draw [thick](6.1, 7.6)--(6.3, 7.9);
\draw [thick, double, <->](6, 5.75)--node[midway, left=0.5em]{\cite{HerStad}}(6, 4.25);
\draw [thick, double, ->](5.8, 2.25)--node[midway, left=0.1em]{}(5.8, 0.75);
\draw [thick, double, <-](6.2, 2.25)--node[midway, right=0.1em]{\small{Fig.} \ref{fig: ccnotclex}}(6.2, 0.75);
\draw [thick](6.1, 1.35)--(6.3,1.65);
\draw [thick, double,<->] (7.5, 9.25)--node [align=center, minimum size=2cm, midway, above=-2em] {\cite{liraoindep}}(9, 9.25); 
\draw [thick, double,<->] (7.5, 6.25)--node [align=center, minimum size=2cm, midway, above=-2em] {\cite{bw}}(9, 6.25); 
\draw [thick, double,<->] (7.5, 0.5)--node [align=center, minimum size=2cm, midway, above=-2em] {\ref{thm: tcl iff RFAS}}(9, 0.5); 
\draw [thick, double,<->] (7.5, 3.75)--node [align=center, minimum size=2cm, midway, above=-2em] {\cite{HerStad}}(9, 3.75); 
% \draw [double, thick, <->] (12.5, 9.25)--node [align=center, minimum size=2cm, midway, above=-2em] {\ref{}}(14, 9.25); 
% \draw [double, thick, ->] (4.3, 9)--node[align=center, minimum size=2cm, midway, above=-2em] {\ref{}}(1.8, 7);
\draw [double, thick, <-] (4.7, 9.25)--node[align=center, minimum size=2cm, midway, above=-2.1em] {\small{Figs.} \ref{fig: ccnotclex}, \ref{nongradedccnotclex}}(3.2, 9.25);
\draw [thick](3.8, 9.1)--(4, 9.4);

\draw [double, thick, ->] (1.8, 0.75)--node [align=center, minimum size=2cm, midway,  above=-2em, sloped] {graded: ?, nongraded: \ref{rmk:cc not ec li}, \cite{liclnotel} } (1.8, 8.5);
\draw [thick](1.7, 4.75)--(1.9, 5.1);
\draw [double, thick, ->] (2.2, 8.5)--node[align=center, minimum size=2cm, midway, right=-2em] {\cite{kozlov}}(2.2, 0.75);
%\draw [thick, double, ->] (2, 10) to [out=20, in=160] node [midway, below]{} (10.75, 10) ;
% \draw [double, thick, <-](1.2, 0.5)--node [midway, right=0.5em]{%\ref{grao2thenCC} then \ref{CCimpliesTopolCL}; \\ 
% }(4.5,4); 
% \draw (-0.5, -8) node[align=center,minimum size=3cm] {P admits \\ RAO}; 
% \draw [thick, double,<->] (-3.25, -8)--node [align=center, minimum size=2cm, midway, below=-1.5em] {}(-2, -8);
\end{tikzpicture}
\vspace{-0.5in}
\caption{Relationships between different notions of lexicographic shellability and where the relationships are proved in this paper and elsewhere in the literature.}
\label{fig:implications}
\end{figure}
\end{center}

Since the 1980s, lexicographic shellability has been a popular and powerful tool to establish poset shellability, a property with important topological, combinatorial, and algebraic implications. These implications stem largely from the relationship between a poset $P$ and an associated simplicial complex, $\Delta(P)$, called the order complex of $P$. For instance, $\Delta(P)$ is naturally associated with a commutative ring called the Stanley-Reisner ring. If $P$ is shellable, then $\Delta(P)$ has the homotopy type of a wedge of spheres and the Stanley-Reisner ring is Cohen-Macaulay. Poset shellability, particularly lexicographic shellability, also has applications in other mathematical contexts; many of these arise when considering the shellability of specific posets. For instance, the order complex of the subgroup lattice of a finite group is shellable if and only if the group is solvable (\cite{shareshian2001}), and a lexicographic shelling of a poset associated to a specific subspace arrangement was used to prove a complexity theory lower bound in \cite{lineardecistreestopboundsbjornerlovaszyao1992}. 

Lexicographic shellability is a family of methods that involves labeling the cover relations of a poset $P$ in a way that yields a shelling order of $\Delta(P)$. In particular, these labelings induce a sequence of labels for each maximal chain of $P$. As the faces of $\Delta(P)$ are the chains of $P$, when these label sequences are lexicographically ordered, the corresponding order on maximal chains gives a shelling order of $\Delta(P)$. Bj{\"o}rner introduced the first of these methods, called EL-labeling, in \cite{bjornercm}. At the heart of how EL-labelings produce shellings are pairs of adjacent cover relations where the corresponding pair of labels increase from bottom to top (ascents) or do not increase from bottom to top (descents). 

EL-labelings have been generalized in several ways in the past few decades giving other types of lexicographic shellings. Bj{\"o}rner and Wachs introduced CL-labelings, which allows for edge labels to depend on the path taken from the bottom of the poset (the root) to the edge being labeled (\cite{BW82}). These types of labelings are called chain-edge labelings. As with EL-labelings, the heart of the matter are ascents and descents in the label sequences of maximal chains. 

Another of Bj{\"o}rner and Wachs' significant contributions to the theory of lexicographic shellability was a recursive perspective first introduced in \cite{bw}. They gave a formulation of CL-shellability called recursive atom ordering (RAO), which consists of the specification of total orderings of atoms of upper intervals for a bounded poset. These atom orderings are recursive in that for nested intervals, the respective atom orderings agree with each other in a particular sense. Hersh and Stadnyk introduced generalized recursive atom orderings (GRAOs) in \cite{HerStad} as a more flexible version of RAOs that is nonetheless sufficient for CL-shellability. 

Kozlov introduced a notion of lexicographic shellability called CC-shellability in \cite{kozlov}. His method amounts to showing that actual ascents and descents are not strictly necessary to produce a shelling order from an edge or chain-edge labeling. Hersh then introduced in \cite{hersh} a slightly different version of lexicographic shellability, what has come to be known as TCL-shellability, which similarly relies on a broader notion of ascents and descents. More specifically, for a particular labeling, it is enough to be able to identify pairs of adjacent cover relation that behave like ascents (resp. descents) though the label sequences on these chains may not actually be increasing (resp. not increasing). Hersh calls these pairs of adjacent cover relations topological ascents (resp. topological descents). 

 There have been several results exploring the relationships between these different formulations of lexicographic shellability. For example, Vince and Wachs in \cite{vincewachsshellnonlexshell} and Walker in \cite{walkershellnonlexshell} constructed posets that are shellable but not lexicographically shellable. \cref{fig:implications} illustrates the known relationships among these different versions of lexicographic shellability and their different formulations, including the relationships known from previous work and those proven in this paper.

%%%%%%%%%%%%%%EQUIVALENCE OF TCL AND CC%%%%%%%%%%%%%%%%%%
In \cref{thm: tcl iff cc}, we show that Kozlov's CC-shellability is equivalent to Hersh's TCL-shellability. While it is clear that any CC-labeling is also a TCL-labeling, we construct a CC-labeling for a TCL-shellable poset by introducing a relabeling technique that is intuitive in spirit but quite technical in detail. %give an intuitive in spirit but quite technical in detail method of relabeling a TCL-shellable poset that gives a CC-labeling. 
This relabeling technique further plays a central role in the proof of our recursive formulation of CC-shellability.

%%%%%%%%%%%%%NON-EQUIVALENCE OF TCL AND CL%%%%%%%%%%%%%%%%%

Hersh and Stadnyk in \cite{HerStad} %further used the tool of GRAOs to 
 showed that certain CC-shellable posets, those that they call ``self-consistent" CC-shellable posets, are also CL-shellable. They additionally asked the question are all CC-shellable posets CL-shellable? In \cref{thm:cc but not cl posets}, we show that CC-shellability (and thus TCL-shellability) and CL-shellability, are actually distinct. In particular, we construct quite small graded and non-graded posets (see Figures \ref{fig: ccnotclex}, \ref{nongradedccnotclex}) that are CC-shellable but not CL-shellable. Our examples, together with Li's examples of posets that are CL-shellable but not EL-shellable (\cite{liclnotel}), help to establish non-equivalence of (and in fact, strict containment among) various notions of lexicographic shellability.  
 
%%%%%%%%%%%%%%%%%%%NEW RECURSIVE TECHNIQUES FOR SHELLINGS; EQUIVALENCE TO TCL%%%%%%%%%%%%%%%%%%%%%%%%%%%%%%%%%%%%%% 
Building on Bj{\"o}rner and Wachs' recursive perspective on lexicographic shellability, we introduce in \cref{def:RFAS} a new recursive method for shelling a poset called recursive first atom sets (RFAS). Notably, there exist posets (see \cref{ex:RFAS not LCRFAS}) that admit an RFAS but for which there is no labeling that could yield any of the shellings that arise from the RFAS. Thus we specify a compatibility condition in \cref{def:LCRFAS}---when an RFAS satisfies this condition we call it an LCRFAS---and show in \cref{thm: tcl iff RFAS} that a bounded poset admits an LCRFAS if and only if it is CC-shellable. 

As the names suggest, the essence of our recursive formulations is that we only need specify the first atom in every rooted interval rather than specifying a total order on all atoms of upper rooted intervals as in RAOs and GRAOs. In this way, for a particular interval, RFASs and LCRFASs require much less data than RAOs and GRAOs. There is another important contrast between RAOs, GRAOs and LCRFASs that illustrates that there is still substantial subtlety to LCRFASs. In RAOs and GRAOs, the atom ordering depends on the root to the bottom of an interval but it does not depend on the top element of the interval. On the other hand, the first atoms in an LCRFAS can depend on both the root to the bottom of an interval and the top element of the interval.

%%%%%%%%%%%%%%%%%%%%%%%OVERVIEW OF PAPER%%%%%%%%%%%%%%%
The structure of this paper is as follows: \cref{sec:background} provides the necessary background on posets and lexicographic shellability. In \cref{sec:cc not cl}, we give two examples of posets that are CC-shellable and prove in \cref{thm:cc but not cl posets} why these examples cannot be CL-shellable by showing neither can admit an RAO. In \cref{sec:TCL equiv CC}, we show that CC-shellability and TCL-shellability are equivalent in \cref{thm: tcl iff cc} by providing a method for taking a TCL-shellable poset and relabeling it with a CC-labeling. 
% This relabeling method is also used in \cref{sec:RFAS} to prove \cref{thm: tcl iff RFAS}. 
In \cref{sec:RFAS} we define RFASs and  prove that an RFAS implies shellability in \cref{thm:RFAS implies shellable}. In Section \ref{sec:lcrfas}, we introduce a LCRFASs and show that an LCRFAS is equivalent to CC-shellability in \cref{thm: tcl iff RFAS}; the proof relies on the relabeling method used in the proof of \cref{thm: tcl iff cc}. Lastly, \cref{sec:open questions} discusses some open questions.

\section{Background}\label{sec:background}
A poset $(P, \leq)$ is a set, together with an order relation $\leq$ that is reflexive, transitive, and antisymmetric. We will often refer to a poset $(P, \leq)$ as $P$ when the order relation $\leq$ is understood. If $x < y$ in $P$ and there exists no $z$ such that $x < z < y$, then we call $x < y$ a \textbf{cover relation}. We will usually denote a cover relation by $x \lessdot y$. If $x \leq y$ in $P$, then the \textbf{interval} $[x, y]$ is the poset $[x, y]=\{z: x \leq z \leq y\}$ with order relations inherited from $P$. The poset $P$ is \textbf{bounded} if there exist in $P$ a unique maximal element, denoted $\hat{1}$, and a unique minimal element, denoted $\hat{0}$. In this paper, we consider only bounded posets. The elements covering $\hat{0}$ are called the \textbf{atoms} of $P$ while the elements covered by $\hat{1}$ are call the \textbf{coatoms} of $P$. A \textbf{chain} of $P$ is a totally ordered subset of $P$. We will denote a chain both by $x_1 < x_2 < \ldots < x_k$ and by $\{x_1, x_2, \ldots, x_k\}$. The \textbf{length} of a chain $c$ is one less than the number of elements in $c$. A chain is \textbf{maximal} if it is not properly contained in any other chain of $P$. A chain is \textbf{saturated} if it is a maximal chain of an interval of $P$. If all maximal chains in $P$ have the same length, then $P$ is said to be \textbf{graded}. Otherwise, $P$ is \textbf{nongraded}. A \textbf{rooted interval} $[x, y]_r$ is the interval $[x, y]$ together with a maximal chain $r$ in $[\hat{0}, x]$; $r$ is called the \textbf{root}. A \textbf{rooted cover relation} is a rooted interval $[x, y]_r$ where $x \lessdot y$. If $m$ is a chain containing $x$ in $P$, we will denote by $m^x$ the subchain of $m$ given by $\{y: y \in m, y \leq x\}$. 

We will assume the reader has a basic understanding of simplicial complexes, though they may choose to refer to \cite{Wachs} for more details that are helpful in the context of poset shellability. We simply remind the reader that every poset $P$ can be associated with a simplicial complex $\Delta(P)$, called the \textbf{order complex of $P$}, whose $k-$faces are the chains of length $k$ in $P$. A simplicial complex $\Delta$ is said to be \textbf{shellable} if its facets can be ordered $F_1, F_2, \ldots ,F_s$ such that for $1 < j \leq s$, $\overline{F_j} \cap \left(\cup_{1 \leq i \leq {j-1}} \overline{F_i}\right)$ is pure and $(\text{dim}(F_j)-1)$-dimensional. A poset is shellable if its order complex is shellable. 

One common method for proving a poset is shellable is by labeling the cover relations of the poset in a particular way, which then induces a  total order on the maximal chains of the poset and hence the facets of the order complex; this family of methods is called \textbf{lexicographic shellability}. We now provide a brief overview of these various methods. For more complete but concise references on different types lexicographic shellability see \cite{HerStad} and \cite{Wachs}. 

The simplest version of lexicographic shellability arises from particular kinds of edge labelings called EL-labelings, which were introduced by Bj{\"o}rner in \cite{bjornercm}; a version for nongraded posets was introduced in \cite{non-pure1}. An \textbf{edge labeling} of a poset $P$ is a map $\lambda$ from the set of cover relations of $P$ to the elements of another poset $\Lambda$.
%; i.e. if $u \lessdot v$ is a cover relation in $P$, then $\lambda$ assigns an element of $Q$ to the edge $u \lessdot v$ in the Hasse diagram of $P$. 
For any edge labeling $\lambda$ of $P$ and any saturated chain $c$
%$c=x_0 \lessdot x_1 \lessdot \ldots \lessdot x_{n-1} \lessdot x_n$ 
in $P$, the \textbf{label sequence} $\lambda(c)$ associated to $c$ by $\lambda$ is the sequence obtained by reading the labels assigned by $\lambda$ to the cover relations in $c$ from the bottom of the chain to the top. An edge labeling induces a partial order on the maximal chains of the poset by taking the lexicographic order on label sequences on maximal chains. (Note when we say \textbf{lexicographic order} we mean dictionary order; in particular $aa$ comes before $aaa$ in lexicographic order.)
%In other words, the label sequence associated to $c$ is $\lambda(c)=\lambda(x_0 \lessdot x_1), \lambda(x_1 \lessdot x_2) \ldots \lambda(x_{n-1}\lessdot x_n)$. 
An \textbf{EL-labeling} of the bounded poset $P$ is an edge labeling of $P$ satisfying the requirement that in any interval, exactly one maximal chain has a strictly increasing label sequence, and this label sequence comes lexicographically earlier than the label sequence on all other maximal chains in the interval. Any poset that admits an EL-labeling is said to be \textbf{EL-shellable} and is shellable \cite{bjornercm}. 

More flexibility in labeling is afforded by instead labeling cover relations in chains, which allows one to label the same cover relation differently depending on which chain is being labeled. This more flexible labeling scheme is called a CE-labeling. A \textbf{CE-labeling} (chain-edge labeling) of a bounded poset $P$ is a map $\lambda$ from the set of rooted cover relations of $P$ to a poset $\Lambda$. For a CE-labeling $\lambda$, we will let $\lambda(r, x, u)$ be the label on the rooted cover relation $[x, u]_r$. For the rooted interval $[x, y]_r$ and a maximal chain $c$ given by $x \lessdot x_1 \lessdot \ldots \lessdot x_t=y$ in $[x, y]_r$, the \textbf{label sequence associated to $c$} is $\lambda(r, x, x_1), \lambda(r \cup \{x_1\}, x_1, x_2), \ldots, \lambda(r \cup \{x_1, \ldots, x_{t-1}\}, x_{t-1}, x_t)$. We will let $\lambda(r)$ denote this label sequence for a root $r$. A CE-labeling $\lambda$ is a \textbf{CL-labeling} if for every rooted interval $[x, y]_r$ in $P$, there is a unique maximal chain $c$ with a strictly increasing label sequence in $[x, y]_r$, and the label sequence associated to $c$ lexicographically precedes the label sequence associated to all other maximal chains in $[x, y]_r$. If the poset $P$ admits such a labeling, it is said to be \textbf{CL-shellable} and it is shellable \cite{BW82}. In \cite{bw}, Bj\"{o}rner and Wachs introduced the notion of recursive atom ordering, and showed that a poset admits a recursive atom ordering if and only if it is CL-shellable. 

\begin{definition}[\cite{bw}]
    A bounded poset $P$ admits a \textbf{recursive atom ordering} (RAO) if the length of the longest chain in $P$ is 1, or if the length of the longest chain in $P$ is greater than 1 and there is an ordering $a_1, a_2, \ldots, a_t$ of the atoms of $P$ satisfying the following: 
    \begin{enumerate}
        \item For $j=1, 2, \ldots t$, $[a_j, \hat{1}]$ admits a recursive atom ordering such that the atoms of $[a_j, \hat{1}]$ covering some $a_i$ for $i < j$ come first. 
        \item For $i < j$, if $a_i, a_j < y$ for some $y \in P$, then there exists some $k$ and some $z\in P$ where $k < j$, $a_j \lessdot z \leq y$, and $a_k < z$. 
    \end{enumerate}
\end{definition}

In \cite{hersh}, Hersh introduced the language of topological ascents and descents to describe a more general version of lexicographic shellability. We will define both CC-labelings and TCL-labelings using this language, though CC-labelings were introduced first by Kozlov in \cite{kozlov} without explicitly using the language of ascents and descents. Suppose that $\lambda$ is a CE-labeling. Then a \textbf{topological ascent} is a pair of rooted cover relations $[u, v]_r$, $[v, w]_{r \cup v}$ in $P$ for which the pair of labels $\lambda(r,u, v), \lambda(r\cup \{v\},v, w)$ lexicographically precedes all other label sequences on maximal chains in $[u, w]_r$.  If this  pair of rooted cover relations is not a topological ascent, then it is called a \textbf{topological descent}. If a chain $c$ in $[x, y]_r$ consists entirely of topological ascents, we say that $c$ is \textbf{topologically ascending}. 
%Otherwise we say it is \textbf{topologically descending}. 
\begin{definition}[\cite{kozlov}]
    A CE-labeling is a \textbf{CC-labeling} if every rooted interval $[x, y]_r$ has a unique topologically ascending maximal chain, the label sequences for the maximal chains in $[x, y]_r$ are all distinct, and no label sequence is the prefix for any other. If $P$ admits a CC-labeling, then $P$ is called \textbf{CC-shellable}.
\end{definition}

Kozlov showed that any poset that is CC-shellable is in fact shellable. He also defined EC-labelings in \cite{kozlov}, which are CC-labelings whose labels on cover relations do not vary depending on the root, in the same way that EL-labelings can be viewed as CL-labelings whose labels do not depend on roots. 

\begin{definition}[\cite{hersh}]
    A CE-labeling is a \textbf{TCL-labeling} if every rooted interval $[x, y]_r$ has a unique, topologically ascending maximal chain. If $P$ admits a TCL-labeling, $P$ is said to be \textbf{TCL-shellable}.
\end{definition} 

Hersh showed that TCL-shellable posets are shellable in \cite{hersh}. Note that there exist TCL-labelings that are not CC-labelings because of the requirement that CC-labelings have distinct label sequences. There are some important consequences of the added flexibility afforded by TCL-labelings which we highlight in the following two remarks. 

\begin{remark}
    \label{rmk: descent sets}
    Notice that if $\lambda$ is a TCL-labeling of the poset $P$ and $\lambda'$ is another CE-labeling of $P$ with a topological descent at $u \lessdot v \lessdot w$ exactly when $u \lessdot v \lessdot w$ is a topological descent with respect to $\lambda$, then $\lambda'$ must also be a TCL-labeling of $P$.
\end{remark}

\begin{remark}\label{rmk:lin ext of label set preserves tcl}
Let $P$ be a finite bounded poset with TCL-labeling $\lambda$ taking labels in a poset $\Lambda$. Observe that ordering the labels by any linear extension of $\Lambda$ preserves the property of $\lambda$ being a TCL-labeling. This is because the topologically ascending maximal chain in any rooted interval must strictly lexicographically precede any other maximal chain in the interval and this is preserved after taking a linear extension of $\Lambda$. Thus, we may assume that the labels of all TCL-labelings are integers. Kozlov observes this for CC-labelings in \cite{kozlov}. Note that this does not hold for either EL- or CL-labelings.
\end{remark}

\begin{figure}
%%%%%%% PICTURE 1%%%%%%%%%%
\begin{tikzpicture}
    \node [below] at (0,0) {}; 
\draw [fill] (0,0) circle [radius=0.05]; 
\node [left] at (-1,1) {}; 
\draw [fill] (-1,1) circle [radius=0.05]; 
\node [right] at (1,1) {}; 
\draw [fill] (1,1) circle [radius=0.05]; 
\node [left] at (-1,2) {}; 
\draw [fill] (-1,2) circle [radius=0.05]; 
\node [right] at (1,2) {}; 
\draw [fill] (1,2) circle [radius=0.05]; 
\node [above] at (0,3) {}; 
\draw [fill] (0,3) circle [radius=0.05];
\draw (1,1)--node[near start, below]{\tiny 1}(-1,2);
\draw (0,0)--node[midway, left]{\tiny 1}(-1,1)--node[midway, left]{\tiny 2}(-1,2)--node[midway, left]{\tiny 3}(0,3);
\draw (0,0)--node[midway, right]{\tiny 3}(1,1)--node[midway, right]{\tiny 2}(1,2)--node[midway, right]{\tiny 1}(0,3);
\draw (-1,1)--node[near start, below]{\tiny 3}(1,2);
% \draw (1,1)--(-1,2);
%\node at (0,-1.6) {}; 
\end{tikzpicture}
\hskip 0.5in
%%%%%%% PICTURE 2%%%%%%%%%%
\begin{tikzpicture}
 \node [below] at (0,0) {}; 
\draw [fill] (0,0) circle [radius=0.05]; 
\node [left] at (-1,1) {}; 
\draw [fill] (-1,1) circle [radius=0.05]; 
\node [right] at (1,1) {}; 
\draw [fill] (1,1) circle [radius=0.05]; 
\node [left] at (-1,2) {}; 
\draw [fill] (-1,2) circle [radius=0.05]; 
\node [right] at (1,2) {}; 
\draw [fill] (1,2) circle [radius=0.05]; 
\node [above] at (0,3) {}; 
\draw [fill] (0,3) circle [radius=0.05];
\draw (1,1)--node[draw,inner sep=1pt, near start, below=2pt]{\tiny 2}(-1,2);
\draw (0,0)--node[style={draw,circle}, inner sep=0.5pt, midway, left=3pt]{\tiny 1}(-1,1)--node[style={draw,circle}, inner sep=0.5pt, midway, left=2pt]{\tiny 2}(-1,2)--node[style={draw,circle}, inner sep=0.5pt, midway, left=3pt]{\tiny 3} node[midway, right=4pt, draw, inner sep=1pt]{\tiny 1}(0,3);
\draw (0,0)--node[draw, inner sep=1, midway, right=4pt]{\tiny 3}(1,1)--node[draw, inner sep=1pt, midway, right=3pt]{\tiny 1}(1,2)--node[draw, inner sep=1pt, midway, right=6pt]{\tiny 2} node [style={draw, circle}, midway, right=4pt]{}(0,3);
\draw (-1,1)--node[style={draw,circle}, inner sep=0.5pt, near start, below=2pt]{\tiny 3}(1,2);
\end{tikzpicture}
\hskip 0.5in
%%%%%%% PICTURE 3%%%%%%%%%%
\begin{tikzpicture}
 \node [below] at (0,0) {}; 
\draw [fill] (0,0) circle [radius=0.05]; 
\node [left] at (-1,1) {}; 
\draw [fill] (-1,1) circle [radius=0.05]; 
\node [right] at (1,1) {}; 
\draw [fill] (1,1) circle [radius=0.05]; 
\node [left] at (-1,2) {}; 
\draw [fill] (-1,2) circle [radius=0.05]; 
\node [right] at (1,2) {}; 
\draw [fill] (1,2) circle [radius=0.05]; 
\node [above] at (0,3) {}; 
\draw [fill] (0,3) circle [radius=0.05];
\draw (1,1)--node[near start, below]{\tiny 3}(-1,2);
\draw (0,0)--node[midway, left]{\tiny 1}(-1,1)--node[midway, left]{\tiny 2}(-1,2)--node[midway, left]{\tiny 4}(0,3);
\draw (0,0)--node[midway, right]{\tiny 1}(1,1)--node[midway, right]{\tiny 4}(1,2)--node[midway, right]{\tiny 5}(0,3);
\draw (-1,1)--node[near start, below]{\tiny 5}(1,2); 
\end{tikzpicture}

    \caption{A poset $P$ with an EL-labeling (left), a CL-labeling that is not an EL-labeling (middle), and a CC-labeling that is neither an EL-labeling nor a CL-labeling (right), as described in Example \ref{ex: labelings}.}
    \label{fig: labelings} 
\end{figure}
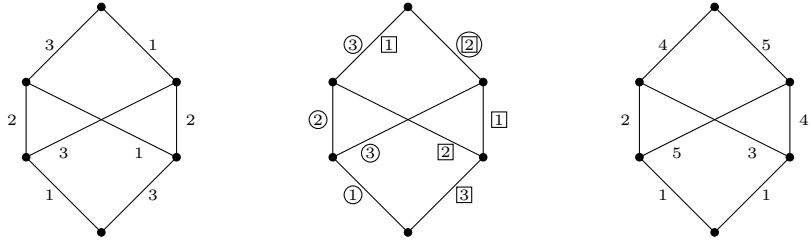

\begin{example}
\label{ex: labelings}
Figure \ref{fig: labelings} gives three different labelings on a poset $P$. The poset on the left of the figure is labeled with an EL-labeling. The poset in the middle of the figure is labeled with a CL-labeling that is not an EL-labeling, since the label on the top left cover relation in the poset depends on the root leading from the bottom of the poset to the cover relation. Specifically, the circled labels correspond to cover relations within chains that include the leftmost atom of the poset, whereas the boxed labels give the labels on cover relations within chains that include the rightmost atom of the poset. The poset on the right of the figure is labeled by a CC-labeling that is neither an EL-labeling nor a CL-labeling, since there exist intervals in the poset where several maximal chains in the interval have  increasing label sequences. Note that the lexicographic ordering on maximal chains induced by each of the labelings in the figure is different.
\end{example}

In \cite{HerStad}, Hersh and Stadnyk introduced the notion of generalized recursive atom ordering:
\begin{definition}[\cite{HerStad}] A poset $P$ admits a \textbf{generalized recursive atom ordering} (or GRAO) if the length of $P$ is 1 or if the length of $P$ is greater than 1 and there exists an ordering $a_1, a_2, \ldots a_t$ of the atoms of $P$ satisfying the following: 
\begin{enumerate}
\item 
\begin{enumerate}
    \item For $1 \leq j \leq t$, $[a_j, \hat{1}]$ admits a GRAO
    \item For atom $a_j$ and $x, w$ such that $a_j \lessdot x \lessdot w$, the following holds when the ordering from (i)(a) is restricted to $[a_j, w]$: either the first atom of $[a_j, w]$ is above some $a_i$ with $i < j$, or no atom of $[a_j, w]$ is above any $a_i$ with $i < j$.
\end{enumerate}
\item For any $y \in P$ and $a_i, a_j < y$ with $i < j$, there exists some $z \in P$ and some atom $a_k \in P$ where $a_j \lessdot z \leq y$, $a_k < z$, and $k < j$. 
\end{enumerate}
\end{definition} 

Furthermore, they showed that a poset admits a GRAO if and only if it is TCL-shellable using a TCL-labeling that is self-consistent. A \textbf{self-consistent} TCL-labeling is a TCL-labeling with the property that whenever $a$ and $b$ are atoms in a rooted interval $[x, y]_r$ and $a$ is in the lexicographically first saturated chain of $[x, y]_r$, any saturated chain containing the atom $a$ in a rooted interval $[x, y']_r$ comes lexicographically earlier than any saturated chain containing $b$ in $[x, y']_r$. In \cite{HerStad}, Hersh and Stadnyk also showed that any GRAO can be transformed into an RAO, thereby showing that any poset that admits a self-consistent TCL-labeling is also CL-shellable.  
% \textcolor{blue}{They further posed the question, ``Are all TCL-shellable posets CL-shellable?" We answer this question in the negative with Figures \ref{fig: ccnotclex} and \ref{nongradedccnotclex}.}

\section{Posets that are CC-shellable but not CL-shellable}\label{sec:cc not cl}

Here we provide two examples of posets---one graded and one nongraded---that are TCL-shellable but not CL-shellable as neither admits a recursive atom ordering.

\begin{figure}
\begin{center}
\begin{tikzpicture}[scale=0.75]

\node at (5,0){};
\draw [fill] (5,0) circle [radius=0.075]; 
%RANK 1 NODES
\node at (0,2){};
\draw [fill] (0,2) circle [radius=0.075];
\node at (5,2){};
\draw [fill] (5,2) circle [radius=0.075];
\node at (10,2){};
\draw [fill] (10,2) circle [radius=0.075];
%RANK 2 NODES
\node at (-2,8){};
\draw [fill] (-2,8) circle [radius=0.075];
\node at (1,8){};
\draw [fill] (1,8) circle [radius=0.075];
\node at (4,8){};
\draw [fill] (4,8) circle [radius=0.075];
\node at (6,8){};
\draw [fill] (6,8) circle [radius=0.075];
\node at (9.25,8){};
\draw [fill] (9.25,8) circle [radius=0.075];
\node at (12,8){};
\draw [fill] (12,8) circle [radius=0.075];
%RANK 3 NODES
\node at (-1,12){};
\draw [fill] (-1,12) circle [radius=0.075];
\node at (3, 12){};
\draw [fill] (3,12) circle [radius=0.075];
\node at (7,12){};
\draw [fill] (7,12) circle [radius=0.075];
\node at (11,12){};
\draw [fill] (11,12) circle [radius=0.075];
%RANK 4 NODES
\node at (5,14){};
\draw [fill] (5,14) circle [radius=0.075];

%FIRST EDGES

\draw (5,0)--node [near start, circle, fill=white, inner sep=0.5pt,minimum size=1pt] {\footnotesize \textbf{1}}node [midway, circle, fill=white, inner sep=0.5pt,minimum size=1pt] {\footnotesize (1)} (0,2);
\draw (5,0)--node [near start, circle, fill=white, inner sep=0.5pt,minimum size=1pt] {\footnotesize \textbf{1}} node [midway, circle, fill=white, inner sep=0.5pt,minimum size=1pt] {\footnotesize (2)}(5,2);
\draw (5,0)--node [near start, circle, fill=white, inner sep=0.5pt,minimum size=1pt] {\footnotesize \textbf{1}} node [midway, circle, fill=white, inner sep=0.5pt,minimum size=1pt] {\footnotesize (4)}(10,2);

%SECOND EDGES -- UP FROM A
\draw (0,2)--node [pos=.35, circle, fill=white, inner sep=0.5pt,minimum size=1pt] {\footnotesize \textbf{1}} node [pos=.5, circle, fill=white, inner sep=0.5pt,minimum size=1pt] {\footnotesize (2)}(-2,8);
\draw (0,2)--node [pos=.3, circle, fill=white, inner sep=0.5pt,minimum size=1pt] {\footnotesize \textbf{9}}node [pos=.45, circle, fill=white, inner sep=0.5pt,minimum size=1pt] {\footnotesize (2)}(1,8);
\draw (0,2)--node [pos=.21, circle, fill=white, inner sep=0.5pt,minimum size=1pt] {\footnotesize \textbf{1}} node [pos=.3, circle, fill=white, inner sep=0.5pt,minimum size=1pt] {\footnotesize (4)}(6,8);
\draw (0,2)--node [very near start, circle, fill=white, inner sep=0.5pt,minimum size=1pt] {\footnotesize \textbf{10}}node [pos=.2, circle, fill=white, inner sep=0.5pt,minimum size=1pt] {\footnotesize (0)}(9.25,8);

%SECOND EDGES -- UP FROM B
\draw (5,2)--node [very near start, circle, fill=white, inner sep=0.5pt,minimum size=1pt] {\footnotesize \textbf{2}}node [near start, circle, fill=white, inner sep=0.5pt,minimum size=1pt] {\footnotesize (0)}(-2,8);
\draw (5,2)--node [very near start, circle, fill=white, inner sep=0.5pt,minimum size=1pt] {\footnotesize \textbf{8}}node [near start, circle, fill=white, inner sep=0.5pt,minimum size=1pt] {\footnotesize (1)}(1,8);
\draw (5,2)--node [very near start, circle, fill=white, inner sep=0.5pt,minimum size=1pt] {\footnotesize \textbf{3}}node [near start, circle, fill=white, inner sep=0.5pt,minimum size=1pt] {\footnotesize (3)}(4,8);
\draw (5,2)--node [very near start, circle, fill=white, inner sep=0.5pt,minimum size=1pt] {\footnotesize \textbf{6}}node [near start, circle, fill=white, inner sep=0.5pt,minimum size=1pt] {\footnotesize (4)}(12,8);

%SECOND EDGES -- UP FROM C
\draw (10,2)--node [very near start, left, circle, fill=white, inner sep=0.5pt,minimum size=1pt] {\footnotesize \textbf{7}}node [near start, circle, fill=white, inner sep=0.35pt,minimum size=.75pt] {\footnotesize (1)}(4,8);
\draw (10,2)--node [pos=.25, circle, fill=white, inner sep=0.5pt,minimum size=1pt] {\footnotesize \textbf{4}}node [pos=.35, circle, fill=white, inner sep=0.35pt,minimum size=.75pt] {\footnotesize (0)}(6,8);
\draw (10,2)--node [pos=.35, circle, fill=white, inner sep=0.5pt,minimum size=1pt] {\footnotesize \textbf{11}}node [midway, circle, fill=white, inner sep=0.5pt,minimum size=1pt] {\footnotesize (5)}(9.25,8);
\draw (10,2)--node [pos=.35, circle, fill=white, inner sep=0.5pt,minimum size=1pt] {\footnotesize \textbf{5}}node [midway, circle, fill=white, inner sep=0.5pt,minimum size=1pt] {\footnotesize (3)}(12,8);

%THIRD EDGES -- UP FROM (0, 8)
\draw (-2,8)--node [near start, circle, fill=white, inner sep=0.5pt,minimum size=1pt] {\footnotesize \textbf{1}}node [midway, circle, fill=white, inner sep=0.5pt,minimum size=1pt] {\footnotesize (1)}(-1,12);
\draw (-2,8)--node [near start, circle, fill=white, inner sep=0.5pt,minimum size=1pt] {\footnotesize \textbf{2}}node [pos=.4, circle, fill=white, inner sep=0.5pt,minimum size=1pt] {\footnotesize (5)}(3,12);

%THIRD EDGES -- UP FROM (1, 8)
\draw (1,8)--node [very near start, circle, fill=white, inner sep=0.5pt,minimum size=1pt] {\footnotesize \textbf{1}} node [near start, circle, fill=white, inner sep=0.5pt, minimum size=1pt] {\footnotesize (6)}(3,12); 
\draw (1,8)--node [pos=0.1, circle, fill=white, inner sep=0.5pt,minimum size=1pt] {\footnotesize \textbf{2}}node [pos=.2, circle, fill=white, inner sep=0.5pt,minimum size=1pt] {\footnotesize (0)}(7,12);

%THIRD EDGES -- UP FROM (4, 8)
\draw (4,8)--node [pos=.08, circle, fill=white, inner sep=0.5pt,minimum size=1pt] {\footnotesize \textbf{2}}node [pos=.19, circle, fill=white, inner sep=0.5pt,minimum size=1pt] {\footnotesize (0)}(-1,12);
\draw (4,8)--node [very near start, circle, fill=white, inner sep=0.5pt,minimum size=1pt] {\footnotesize \textbf{1}}node [pos=.3, circle, fill=white, inner sep=0.5pt,minimum size=1pt] {\footnotesize (4)}(3,12);

%THIRD EDGES -- UP FROM (6, 8)
\draw (6,8)--node [very near start, circle, fill=white, inner sep=0.5pt,minimum size=1pt] {\footnotesize \textbf{3}} node [pos=.3, circle, fill=white, inner sep=0.5pt,minimum size=1pt] {\footnotesize (3)}(3,12);
\draw (6, 8)--node [pos=0.08, circle, fill=white, inner sep=0.5pt,minimum size=1pt] {\footnotesize \textbf{4}} node [pos=.2, circle, fill=white, inner sep=0.5pt,minimum size=1pt] {\footnotesize (1)}(11,12);

%THIRD EDGES -- UP FROM (8, 8)
\draw (9.25,8)--node [pos=.1, circle, fill=white, inner sep=0.5pt,minimum size=1pt] {\footnotesize \textbf{1}} node [pos=.2, circle, fill=white, inner sep=0.5pt,minimum size=1pt] {\footnotesize (7)}(3,12);
\draw (9.25,8)--node [pos=0.1, circle, fill=white, inner sep=0.5pt,minimum size=1pt] {\footnotesize \textbf{2}}node [pos=.25, circle, fill=white, inner sep=0.5pt,minimum size=1pt] {\footnotesize (1)}(7,12);

%THIRD EDGES -- UP FROM (10, 8)
\draw (12,8)--node [very near start, circle, fill=white, inner sep=0.5pt,minimum size=1pt] {\footnotesize \textbf{1}} node [pos=.25, circle, fill=white, inner sep=0.5pt,minimum size=1pt] {\footnotesize (2)}(3,12);
\draw (12,8)--node [very near start, circle, fill=white, inner sep=0.5pt,minimum size=1pt] {\footnotesize \textbf{2}} node [midway, circle, fill=white, inner sep=0.5pt,minimum size=1pt] {\footnotesize (0)}(11,12);

%TOP EDGES
\draw (-1,12)--node [near start, circle, fill=white, inner sep=0.5pt,minimum size=1pt] {\footnotesize \textbf{1}} node [midway, circle, fill=white, inner sep=0.5pt,minimum size=1pt] {\footnotesize (3)}(5,14);
\draw (3,12)--node [near start, circle, fill=white, inner sep=0.5pt,minimum size=1pt] {\footnotesize \textbf{1}} node [midway, circle, fill=white, inner sep=0.5pt,minimum size=1pt] {\footnotesize (1)}(5,14);
\draw (7,12)--node [near start, circle, fill=white, inner sep=0.5pt,minimum size=1pt] {\footnotesize \textbf{1}} node [midway, circle, fill=white, inner sep=0.5pt,minimum size=1pt] {\footnotesize (4)}(5,14);
\draw (11,12)--node [near start,circle, fill=white, inner sep=0.5pt,minimum size=1pt] {\footnotesize \textbf{1}} node [midway, circle, fill=white, inner sep=0.5pt,minimum size=1pt] {\footnotesize (2)}(5,14);

\end{tikzpicture}

\caption{A graded TCL-shellable poset that is not CL-shellable. A TCL-labeling of the poset is given by the bold labels. A dual EL-labeling of the poset is given by the labels in parentheses.
}
\label{fig: ccnotclex}
\end{center}
\end{figure}
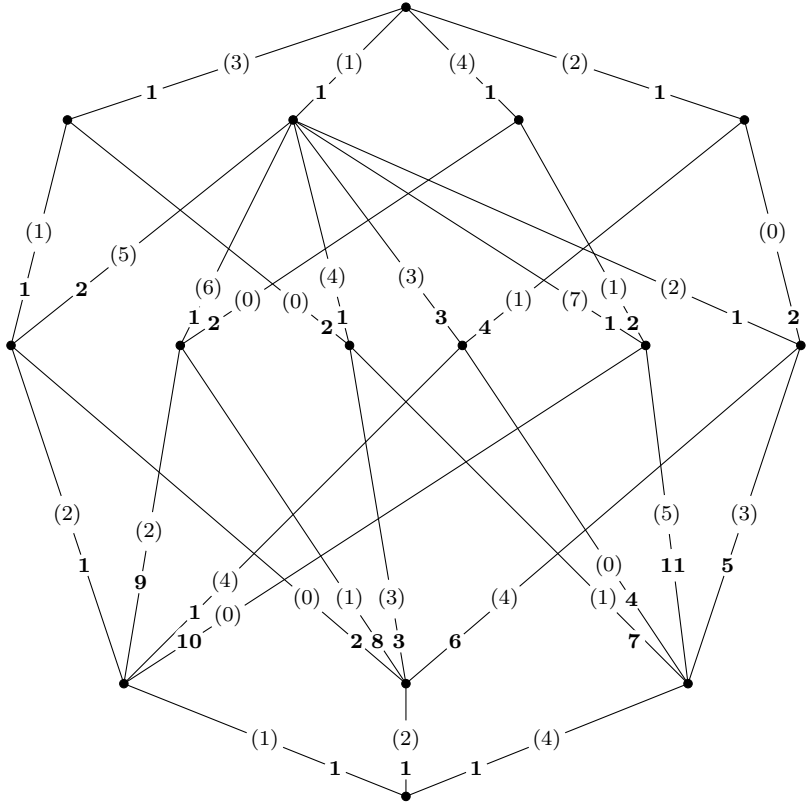

\begin{figure}
\begin{center}
\begin{tikzpicture}[scale=0.75]

\node at (5,0){};
\draw [fill] (5,0) circle [radius=0.05]; 
%RANK 1 NODES
\node at (7.4,2){$d$};
\node at (7,2){};
\draw [fill] (7,2) circle [radius=0.05];
\node at (2.6,2){$c$};
\node at (3,2){};
\draw [fill] (3,2) circle [radius=0.05];
%RANK 2 NODES
\node at (7,4){};
\draw [fill] (7,4) circle [radius=0.05];
\node at (3,3){};
\draw [fill] (3,3) circle [radius=0.05];
\node at (2.6,4) {$y'$};
\node at (3,4){};
\draw [fill] (3,4) circle [radius=0.05];
%RANK 3 NODES
\node at (3,6){};
\draw [fill] (3,6) circle [radius=0.05];
\node at (7,6){};
\draw [fill] (7,6) circle [radius=0.05];
\node at (4.6,6){$y$};
\node at (5,6){};
\draw [fill] (5,6) circle [radius=0.05];

%RANK 4 NODES
\node at (3,8){};
\draw [fill] (3,8) circle [radius=0.05];
\node at (7,8){};
\draw [fill] (7,8) circle [radius=0.05];

%RANK 5 NODES
\node at (5,10){};
\draw [fill] (5,10) circle [radius=0.05];

%FIRST EDGES

\draw (5,0)--node [midway, circle, fill=white, inner sep=0.5pt,minimum size=1pt] {\small 1}(7,2);
\draw (5,0)--node [midway, circle, fill=white, inner sep=0.5pt,minimum size=1pt] {\small 1}(3,2);

%SECOND EDGES -- UP FROM A
\draw (3,2)--node [midway, circle, fill=white, inner sep=0.5pt,minimum size=1pt] {\small 1}(3,3);
\draw (3,2)--node [near start, circle, fill=white, inner sep=0.5pt,minimum size=1pt] {\small 5}(5,6);

%SECOND EDGES -- UP FROM B
\draw (7,2)--node [midway, circle, fill=white, inner sep=0.5pt,minimum size=1pt] {\small 3}(7,4);
\draw (7,2)--node [near start, circle, fill=white, inner sep=0.5pt,minimum size=1pt] {\small 2}(3,4);

%EDGES -- UP FROM (3,3)
\draw (3,3)--node [midway, circle, fill=white, inner sep=0.5pt,minimum size=1pt] {\small 1}(3,4);

%EDGES -- UP FROM (3, 4)
\draw (3,4)--node [midway, circle, fill=white, inner sep=0.5pt,minimum size=1pt] {\small 1}(3,6);

%EDGES -- UP FROM (7, 4)
\draw (7,4)--node [near start, circle, fill=white, inner sep=0.5pt,minimum size=1pt] {\small 2}(3,6);
\draw (7,4)--node [midway, circle, fill=white, inner sep=0.5pt,minimum size=1pt] {\small 3}(7,6);
\draw (7,4)--node [midway, circle, fill=white, inner sep=0.5pt,minimum size=1pt] {\small 4}(5,6);

%EDGES -- UP FROM (3, 6)
\draw (3,6)--node [midway, circle, fill=white, inner sep=0.5pt,minimum size=1pt] {\small 1}(3,8);

%EDGES -- UP FROM (7, 6)
\draw (7,6)--node [near start, circle, fill=white, inner sep=0.5pt,minimum size=1pt] {\small 2}(3,8);
\draw (7,6)--node [midway, circle, fill=white, inner sep=0.5pt,minimum size=1pt] {\small 3}(7,8);

%EDGES -- UP FROM (3, 8)
\draw (3,8)--node [midway, circle, fill=white, inner sep=0.5pt,minimum size=1pt] {\small 1}(5,10);

%EDGES -- UP FROM (5, 6)
\draw (5,6)--node [near start, circle, fill=white, inner sep=0.5pt,minimum size=1pt] {\small 4}(3,8);
\draw (5,6)--node [very near start, circle, fill=white, inner sep=0.5pt,minimum size=1pt] {\small 5}(7,8);

%EDGES -- UP FROM (7, 8)
\draw (7,8)--node [midway, circle, fill=white, inner sep=0.5pt,minimum size=1pt] {\small 2}(5,10);

\end{tikzpicture}
\hskip 0.75in
\begin{tikzpicture}[scale=0.75]

\node at (5,0){};
\draw [fill] (5,0) circle [radius=0.05]; 
%RANK 1 NODES
\node at (7,2){};
\draw [fill] (7,2) circle [radius=0.05];
\node at (3,2){};
\draw [fill] (3,2) circle [radius=0.05];
%RANK 2 NODES
\node at (7,4){};
\draw [fill] (7,4) circle [radius=0.05];
\node at (3,3){};
\draw [fill] (3,3) circle [radius=0.05];
\node at (3,4){};
\draw [fill] (3,4) circle [radius=0.05];
%RANK 3 NODES
\node at (3,6){};
\draw [fill] (3,6) circle [radius=0.05];
\node at (7,6){};
\draw [fill] (7,6) circle [radius=0.05];
\node at (5,6){};
\draw [fill] (5,6) circle [radius=0.05];

%RANK 4 NODES
\node at (3,8){};
\draw [fill] (3,8) circle [radius=0.05];
\node at (7,8){};
\draw [fill] (7,8) circle [radius=0.05];

%RANK 5 NODES
\node at (5,10){};
\draw [fill] (5,10) circle [radius=0.05];

%FIRST EDGES

\draw (5,0)--node [midway, circle, fill=white, inner sep=0.5pt,minimum size=1pt] {\small 4}(7,2);
\draw (5,0)--node [midway, circle, fill=white, inner sep=0.5pt,minimum size=1pt] {\small 6}(3,2);

%SECOND EDGES -- UP FROM A
\draw (3,2)--node [midway, circle, fill=white, inner sep=0.5pt,minimum size=1pt] {\small 5}(3,3);
\draw (3,2)--node [near start, circle, fill=white, inner sep=0.5pt,minimum size=1pt] {\small 7}(5,6);

%SECOND EDGES -- UP FROM B
\draw (7,2)--node [midway, circle, fill=white, inner sep=0.5pt,minimum size=1pt] {\small 3}(7,4);
\draw (7,2)--node [near start, circle, fill=white, inner sep=0.5pt,minimum size=1pt] {\small 5}(3,4);

%EDGES -- UP FROM (3,3)
\draw (3,3)--node [midway, circle, fill=white, inner sep=0.5pt,minimum size=1pt] {\small 4}(3,4);

%EDGES -- UP FROM (3, 4)
\draw (3,4)--node [midway, circle, fill=white, inner sep=0.5pt,minimum size=1pt] {\small 3}(3,6);

%EDGES -- UP FROM (7, 4)
\draw (7,4)--node [near start, circle, fill=white, inner sep=0.5pt,minimum size=1pt] {\small 5}(3,6);
\draw (7,4)--node [midway, circle, fill=white, inner sep=0.5pt,minimum size=1pt] {\small 2}(7,6);
\draw (7,4)--node [midway, circle, fill=white, inner sep=0.5pt,minimum size=1pt] {\small 2}(5,6);

%EDGES -- UP FROM (3, 6)
\draw (3,6)--node [midway, circle, fill=white, inner sep=0.5pt,minimum size=1pt] {\small 2}(3,8);

%EDGES -- UP FROM (7, 6)
\draw (7,6)--node [very near start, circle, fill=white, inner sep=0.5pt,minimum size=1pt] {\small 5}(3,8);
\draw (7,6)--node [midway, circle, fill=white, inner sep=0.5pt,minimum size=1pt] {\small 1}(7,8);

%EDGES -- UP FROM (5, 6)
\draw (5,6)--node [near start, circle, fill=white, inner sep=0.5pt,minimum size=1pt] {\small 8}(3,8);
\draw (5,6)--node [very near start, circle, fill=white, inner sep=0.5pt,minimum size=1pt] {\small 3}(7,8);

%EDGES -- UP FROM (3, 8)
\draw (3,8)--node [midway, circle, fill=white, inner sep=0.5pt,minimum size=1pt] {\small 1}(5,10);

%EDGES -- UP FROM (7, 8)
\draw (7,8)--node [midway, circle, fill=white, inner sep=0.5pt,minimum size=1pt] {\small 4}(5,10);

\end{tikzpicture}

\caption{An nongraded TCL-shellable poset that is not CL-shellable. A TCL-labeling of the poset is shown on the left. A dual EL-labeling of the poset is shown on the right.}
\label{nongradedccnotclex}
\end{center}
\end{figure}

\begin{theorem}\label{thm:cc but not cl posets}
    There exist graded and non-graded posets that are CC-shellable but not CL-shellable.
\end{theorem}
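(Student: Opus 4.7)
The plan is to take the two posets displayed in Figures \ref{fig: ccnotclex} and \ref{nongradedccnotclex} as the claimed examples, confirm their CC-shellability from the labelings shown, and then rule out CL-shellability by checking that neither admits a recursive atom ordering.

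For CC-shellability, the bold labels in each figure are asserted to be TCL-labelings. I would verify this by going through every rooted interval $[x,y]_r$ and confirming that exactly one maximal chain of the interval has a label sequence that lexicographically precedes all others; that chain is then the unique topologically ascending chain. Because both posets are small and the label set is finite, this is a finite, mechanical check, best organized rank by rank (intervals of length two are immediate, and the remaining intervals are narrow enough to enumerate). Then \cref{thm: tcl iff cc}, which the excerpt allows us to invoke, upgrades TCL-shellability to CC-shellability. (Alternatively, one could verify by direct inspection that the displayed labelings are already CC-labelings in the sense of Kozlov.)

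For the non-existence of an RAO, I would argue by case analysis on the ordering of the atoms. Given a putative atom ordering $a_1, \dots, a_t$ of $P$, condition (2) of the RAO definition constrains which atom may come first whenever two atoms lie below a common element $y$: for $i < j$ there must exist some $k < j$ and some $z$ with $a_j \lessdot z \le y$ and $a_k < z$. Condition (1) then propagates recursively into each upper interval $[a_j,\hat{1}]$, forcing its atoms that cover some earlier $a_i$ to be listed first. My strategy is to start from the candidate choices for $a_1$ and then $a_2$, push the constraints from (1) upward into the rank-two and rank-three intervals, and show that in each case condition (2) is eventually violated because the required $z$ does not exist in the Hasse diagram. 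Symmetry among the atoms of each poset cuts the number of cases.

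The main obstacle will be managing the recursion cleanly: condition (1) can fail several ranks above the atoms, so the bookkeeping must track both the atom ordering chosen at the bottom and the orderings forced in upper intervals. I expect the posets were engineered so that the obstruction surfaces at particular ``bottleneck'' elements---in Figure \ref{nongradedccnotclex} the elements labeled $c, d, y', y$ appear to be placed precisely to force the contradiction---and focusing the analysis on those elements should streamline the argument. The nongraded poset requires noticeably fewer cases because it has only two atoms, whereas the graded poset requires a more elaborate treatment across its three atoms and the upper intervals above each of them.
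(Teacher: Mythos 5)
Your overall strategy is the paper's: certify CC-shellability via the displayed labelings (the paper asserts, as you propose to verify by finite inspection, that they are CC-labelings, indeed EC-labelings), and rule out CL-shellability by showing neither poset admits a recursive atom ordering. The difference is in how the RAO obstruction is located, and here your plan is substantially more elaborate than necessary. You anticipate a recursive case analysis in which condition (1) is pushed into upper intervals and condition (2) is violated only ``eventually,'' several ranks up; in fact the obstruction is visible already for the first two atoms, and no upper interval is ever entered. Condition (2) applied to $a_1,a_2$ with witness $y$ demands an element $z$ with $a_2\lessdot z\leq y$ and $a_1<z$, i.e.\ an element of $[\hat0,y]$ covering $a_2$ and lying above $a_1$. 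For the graded poset of \cref{fig: ccnotclex}, every pair of atoms $a,b$ lies below some rank-$3$ element $y$ such that no element of $[\hat0,y]$ covers both $a$ and $b$; hence no ordered pair of atoms can begin an RAO, and the case analysis terminates at once. For the nongraded poset of \cref{nongradedccnotclex} you correctly identified the bottleneck elements: $c\lessdot y$ with $d<y$ but no cover of $d$ lies in $[\hat0,y]$ above $c$, and symmetrically for $d\lessdot y'$, so neither atom can come first. So your proposal is correct and would succeed, but as written it does not yet exhibit the concrete obstruction for the graded example, and the recursive bookkeeping you budget for (tracking the orderings forced in upper intervals by condition (1)) is never needed.
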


\begin{proof}
    \cref{fig: ccnotclex} and \cref{nongradedccnotclex} show examples of a graded poset $P$ and a nongraded poset $Q$, respectively, that are each TCL-shellable but not CL-shellable. A TCL-labeling is given for each poset (for $P$, this labeling is given by bold labels in \cref{fig: ccnotclex}; for $Q$, the labeling is given on the left of \cref{nongradedccnotclex}). In fact, each of these labelings is a CC-labeling, and even more specifically, an EC-labeling. Both examples fail to be CL-shellable because no atom order satisfies condition 2 of a GRAO. For $P$, observe that for any pair of atoms $a$ and $b$, there is some rank $3$ element $y$ such that $a,b<y$ while no $z$ in $[\zh,y]$ covers both $a$ and $b$. Thus, no pair of atoms can begin a recursive atom ordering of $P$. 
    
    Similarly, for the two atoms $c$ and $d$ of the nongraded poset $Q$, there are elements $y$ and $y'$ such that $c\lessdot y$ and $d\lessdot y'$, but $y$ does not cover $d$ and $y'$ does not cover $c$. Thus, neither $c$ nor $d$ can be the first atom in a recursive atom ordering of $Q$. 
\end{proof}

     While neither poset $P$ given in \cref{fig: ccnotclex} nor poset $Q$ given in \cref{nongradedccnotclex} is CL-shellable, these posets have TCL-labelings as illustrated in their respective figures. In light of this distinction between TCL- and CL-shellable posets and the examples of graded and non-graded posets that are CL-shellable but not EL-shellable provided by Li in  \cite{liclnotel}, one might wonder whether there exist posets that are CC-shellable but not EC-shellable. We immediately see that this is in fact the case, at least for nongraded posets, by considering Li's non-graded example: 
    \begin{remark}\label{rmk:cc not ec li} Replacing ``EL" with ``EC" in the proof of Theorem 2.1 in \cite{liclnotel} shows that the poset is CC-shellable but not EC-shellable. However, the proof technique used in the proof of Theorem 3.1 in \cite{liclnotel} does not clearly go through for Li's graded example because an EC-labeling does not induce an RAO as shown by our \cref{thm:cc but not cl posets}. 
    \end{remark}

    Observe that both $P$ and $Q$ are dual CL-shellable as witnessed by the dual EL-labelings given in \cref{fig: ccnotclex}  and the labeling on the right of \cref{nongradedccnotclex}, respectively. However, we can easily construct posets that are TCL-shellable but neither CL nor dual CL-shellable. Take the ordinal sum of $P$ with its dual $P^*$. Label this ordinal sum with the given TCL-labeling of $P$, the given EL-labeling of $P^*$, and any integer labeling the cover relation between the $\oneh$ of $P$ and the $\zh$ of $P^*$. The resulting labeling is clearly a TCL-labeling (an EC-labeling more specifically) while the resulting poset is graded and neither CL nor dual CL-shellable. The same construction using $Q$, $Q^*$, and their labelings produces a nongraded example.

% \begin{remark}\label{rmk:not a cw poset}
    % Let $P$ denote the poset in \cref{fig: ccnotclex}.  
    Of further note, neither $P$ nor its dual is the face lattice of a regular CW-complex. To see this, recall the following consequence of Bj{\"o}rner's Proposition 3.1 in \cite{Bj84}. In the face poset of a regular CW-complex, every non-trivial open lower interval $(\emptyset,x)$ is homeomorphic to a sphere. Observe that for the element $x\in P$ with rank $3$ and degree $7$, $[\zh,x]$ has three topologically descending chains, namely the saturated chains labeled $(1,6,1)$, $(1,7,1)$, and $(1,9,1)$. Thus, $(\zh,x)$ is homotopy equivalent to a wedge of three $1$-spheres, so $P$ is not the face lattice of a regular CW-complex. On the other hand, for any atom $x$ of $P$ the order complex of $(x,\oneh)$ is shown in \cref{ordercmplxaboveanatom}. This is clearly not homeomorphic to a sphere despite being homotopy equivalent to a $1$-sphere. Thus, the dual of $P$ is also not the face lattice of a regular CW-complex. This shows that $P$ is not CL-shellable for a somewhat different reason than the examples due to Vince and Wachs \cite{vincewachsshellnonlexshell} and Walker \cite{walkershellnonlexshell}.

\begin{figure}[h]
\begin{center}
\begin{tikzpicture}[,scale=0.6]

%RANK 2 NODES
\node at (-2,6){};
\draw [fill] (-2,6) circle [radius=0.05];
\node at (1,6){};
\draw [fill] (1,6) circle [radius=0.05];
\node at (6,6){};
\draw [fill] (6,6) circle [radius=0.05];
\node at (9,6){};
\draw [fill] (9,6) circle [radius=0.05];
%RANK 3 NODES
\node at (0,9){};
\draw [fill] (0,9) circle [radius=0.05];
\node at (3,9){};
\draw [fill] (3,9) circle [radius=0.05];
\node at (7,9){};
\draw [fill] (7,9) circle [radius=0.05];
\node at (10,9){};
\draw [fill] (10,9) circle [radius=0.05];

%THIRD EDGES -- UP FROM (0, 6)
\draw (-2,6)--(0,9);
\draw (-2,6)--(3,9);

%THIRD EDGES -- UP FROM (2, 6)
\draw (1,6)--(3,9);
\draw (1,6)--(7,9);

%THIRD EDGES -- UP FROM (6, 6)
\draw (6,6)--(3,9);
\draw (6,6)--(10,9);

%THIRD EDGES -- UP FROM (8, 6)
\draw (9,6)--(3,9);
\draw (9,6)--(7,9);

\end{tikzpicture}

\caption{The order complex of $(x,\oneh)$.}
\label{ordercmplxaboveanatom}
\end{center}
\end{figure}
% \end{remark}

\section{TCL-shellable is Equivalent to CC-shellable}\label{sec:TCL equiv CC}

We begin by introducing a labeling technique based on any total order of the maximal chains of a poset. We use this technique later in this section to show that TCL-shellability is equivalent to CC-shellability. Then, in \cref{sec:lcrfas}, we use the same technique to build CE-labelings from certain sets that we will call recursive first atom sets.

While this labeling is somewhat technical, the essence of the labeling is more straightforward. First, we fix a total order on the maximal chains of $P$. We wish to label a rooted cover relation $(r,x\lessdot y)$ by the index of the first maximal chain $m_i$ in this total order that contains both $r$ and $y$. We do this except when a different rooted cover relation $(r,x\lessdot y')$ appears in maximal chains before $m_i$ and again after $m_i$ in the total order, that is, when the chain $m_i$ is ``sandwiched" by two other chains containing the rooted cover relation $(r, x \lessdot y')$. In this case, the distinct rooted cover relations $(r,x\lessdot y)$ and $(r,x\lessdot y')$ receive the same label. %by \cref{prop:labels of out and back roots}. 
In any case, a rooted cover relation is always labeled by the position in the total order of some maximal chain which contains the root.

\begin{definition}\label{def:cc label from tcl}
Let $P$ be a finite bounded poset. Fix a total order $\Gamma: m_1,\dots, m_t$ on the maximal chains of $P$. Define a CE-labeling $\lambda$ of $P$ in the following manner. For a root $r$ from $\zh$ to $x$, let $a_1,\dots, a_s$ be the atoms of $[x,\oneh]_r$ in the order in which they first appear with $r$ in a maximal chain in $\Gamma$. Label the rooted edge $x\lessdot a_j$ with label $\lambda(r, x, a_j)$ defined as follows: Let $m_{i_1}$ be the earliest chain of $\Gamma$ containing $r$ and $a_1$. Set $\lambda(r,x,a_1)=i_1$. Say the first $j-1$ rooted edges have been labeled. Let $m_{i_j}$ be the first chain of $\Gamma$ containing $r$ and $a_j$. If there exists an atom $a_h$ with $h<j$ such that $a_h$ and $r$ are contained in maximal chains $m_k$ and $m_l$ with $k<i_j<l$, set $\lambda(r,x,a_{j})=\lambda(r,x,a_{h})$. Otherwise, set $\lambda(r,x,a_j)=i_j$. 

\end{definition}

Throughout the remainder of this section, given a root $r$, we routinely use the notation $i_j$ to denote the index of the earliest maximal chain (in the total order on maximal chains) containing the atom $a_j$ and the root $r$. Two important consequences of Definition \ref{def:cc label from tcl}, both of which are used in proofs later in this section, are provided in the following remarks.

\begin{remark}\label{rmk:first chain label}
    Notice that using \cref{def:cc label from tcl}, a rooted cover relation $x\lessdot y$ with root $r$ can never have a label larger than the index of the first chain in $\Gamma$ that contains $r\cup \{x,y\}$. 
\end{remark}

\begin{remark}\label{rmk:not labeled by your first chain}
    Observe that by \cref{def:cc label from tcl} we have the following: if a rooted cover relation $x\lessdot y$ with root $r$ is not labeled by the position of the first chain $m_j$ in $\Gamma$ containing $r\cup \{x,y\}$, then there is an element $y'$ covering $x$ such that the first chain $m_i$ in $\Gamma$ containing $r\cup \{x,y'\}$ and some chain $m_k$ containing $r\cup \{x,y'\}$ have $i<j<k$.
\end{remark}

The following proposition is used as a key component in many of the proofs that follow, both in this section and in \cref{sec:lcrfas}. In essence, it describes characteristics of the total order $\Gamma$ that force a set of rooted cover relations to share the same label when labeling the poset using Definition \ref{def:cc label from tcl}. It also shows that Definition \ref{def:cc label from tcl} yields a CE-labeling with the property that no two maximal chains in any rooted have the same label sequence. Here and throughout the rest of the paper, we use the notation $m^x$ to refer to the the elements not above $x$ in a chain $m$ when $m$ contains $x$.

\begin{proposition}\label{prop:same cc relable label seqs}
    Let $P$ be a finite bounded poset. Fix a total order $\Gamma: m_1,\dots, m_t$ on the maximal chains of $P$. Let $\lambda$ be the CE-labeling of $P$ determined by $\Gamma$ as in \cref{def:cc label from tcl}. The following then hold: %given in \cref{def:cc label from tcl}. 
    
    \begin{enumerate}
        \item[(i)] Let $a_1,\dots, a_s$ be the atoms of the rooted interval $[x,\oneh]_r$ in the order induced by $\Gamma$ as in \cref{def:cc label from tcl}. If $\lambda(r,x,a_k)=\lambda(r,x,a_l)$ with $k\leq l$, then we have $\lambda(r,x,a_k)=\lambda(r,x,a_j)$ for all $j$ such that $k \leq j \leq l$. 
        
        \item[(ii)] If maximal chains $m$ and $m'$ of the rooted interval $[x, \hat{1}]_r$ contain distinct atoms of $[x, \oneh]$, then the label sequence associated to $m$ is distinct from the label sequence associated to $m'$. %$\lambda(m)\neq \lambda(m')$.  
        Further, if $P$ is not graded, then for any distinct maximal chains $c$ and $c'$ of $[x,y]_r$, the label sequence associated to $c$ is not a prefix of the label sequence associated to $c'$. %$\lambda(c)$ is not a prefix of $\lambda(c')$.
    \end{enumerate}
\end{proposition}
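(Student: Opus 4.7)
The plan is to establish part (i) by induction on $l-k$, with the cases $l-k\leq 1$ being immediate. For the inductive step, set $N:=\lambda(r,x,a_k)=\lambda(r,x,a_l)$ and fix $j$ with $k<j<l$. Since any label never exceeds the first-chain index of its atom, $N\leq i_k<i_l$, so $\lambda(r,x,a_l)\neq i_l$ and hence \cref{def:cc label from tcl} assigned $\lambda(r,x,a_l)$ via the sandwiching rule: some atom $a_h$ with $h<l$ and $\lambda(r,x,a_h)=N$ appears with $r$ in chains $m_{\alpha},m_{\beta}$ satisfying $\alpha<i_l<\beta$. If $h<j$, then $i_h<i_j$ and $\beta>i_l>i_j$, so $a_h$ also sandwiches $m_{i_j}$ (via $m_{i_h}$ and $m_{\beta}$), whence the sandwiching rule forces $\lambda(r,x,a_j)=\lambda(r,x,a_h)=N$. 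If $h\geq j$, then $k<h<l$ with $h-k<l-k$, and the inductive hypothesis applied to the pair $(k,h)$ (using $\lambda(r,x,a_k)=\lambda(r,x,a_h)=N$) yields $\lambda(r,x,a_j)=N$ since $k\leq j\leq h$.

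For the first assertion of (ii), I would split on whether $m$ and $m'$ have the same length. If their lengths differ, their label sequences are sequences of different lengths and hence distinct. Otherwise both have some common length $d$, and I would show that the final label $L_d$ of any maximal chain $m$ of $[x,\hat{1}]_r$ equals the index $\gamma(m)$ of $r\cup m$ in $\Gamma$. The key observation is that the penultimate element $m_{d-1}$ of $m$ is covered by $\hat{1}$, hence is a coatom of $P$, so $\hat{1}$ is the unique atom of $[m_{d-1},\hat{1}]$. In this degenerate interval \cref{def:cc label from tcl} computes the final label with $s=1$, precluding sandwiching and yielding $L_d=i_1=\gamma(m)$. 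Since distinct maximal chains of $P$ occupy distinct positions in $\Gamma$, the final labels of $m$ and $m'$ differ.

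For the second assertion, I would argue by contradiction: suppose distinct maximal chains $c$ (length $d$) and $c'$ (length $d'\geq d$) of $[x,y]_r$ have label sequences satisfying the prefix relation. When $d=d'$ (so the sequences coincide), I would proceed by induction on the height of $[x,y]$, recursing into $[c_1,y]_{r\cup c_1}$ when $c_1=c'_1$ and, when $c_1\neq c'_1$, extending both chains through a common saturated chain from $y$ to $\hat{1}$ and applying the first assertion of (ii) to produce a discrepancy. When $d<d'$, the coincidence at position $d$ forces the label of the cover $c_{d-1}\lessdot y$ ending $c$ to equal the label of an intermediate cover $c'_{d-1}\lessdot c'_d$ of $c'$ with $c'_d\neq y$; I would analyze these using \cref{rmk:first chain label} and \cref{rmk:not labeled by your first chain}, exploiting the non-gradedness of $P$ to secure appropriate extension chains, and invoke part (i) to control the sandwiching behavior forced by the label equality, ultimately producing a contradiction.

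The main obstacle is the $d<d'$ subcase of the second assertion: the clean identification ``final label equals $\gamma(m)$'' from paragraph two breaks down when $y\neq\hat{1}$ because the element $c_{d-1}$ may admit multiple covers in $P$ and hence multiple atoms in $[c_{d-1},\hat{1}]$, introducing genuine sandwiching possibilities. Bridging this gap by extending partial chains of $[x,y]_r$ to maximal chains of $P$, tracking how label values depend on the root, and combining this with the contiguity of label classes established in part (i), will be the key technical step.
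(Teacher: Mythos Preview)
Your argument for (i) follows the same inductive scheme as the paper and is essentially correct.

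For the first assertion of (ii), your approach---identifying the \emph{final} label of a maximal chain of $[x,\hat 1]_r$ with the index of $r\cup m$ in $\Gamma$---is valid and genuinely different from the paper's. The paper instead looks at the \emph{second} label in each sequence: the label on $a_i\lessdot b_i$ with root $r\cup a_i$ is, by construction, the position in $\Gamma$ of some maximal chain containing $r\cup a_i$. Since $a_1\neq a_2$ are distinct atoms, no maximal chain of $P$ contains both $r\cup a_1$ and $r\cup a_2$, so these second labels differ.

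The advantage of the paper's choice appears exactly where your proposal stalls. Your final-label device depends on the penultimate element of the chain being a coatom of $P$, which fails inside a proper interval $[x,y]$; this is why your $d<d'$ subcase becomes awkward. The paper bypasses all of this: locate the minimal $z<y$ with $c^z=(c')^z$ at which $c$ and $c'$ pass through distinct atoms $a,a'$ of $[z,y]$. Since $a,a'<y$, each chain has a further cover $a\lessdot b$, $a'\lessdot b'$, and the second-label observation (now with roots $r\cup c^z\cup\{a\}$ and $r\cup c^z\cup\{a'\}$) shows these two labels differ. They occupy the same position in the two label sequences, so neither sequence can be a prefix of the other. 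No induction on height, no separate $d<d'$ analysis, and no use of non-gradedness is required. If you swap your last-label argument for this second-label argument, both assertions of (ii) collapse to a couple of lines.
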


\begin{proof}
   We show (i) by induction on $l-k$. If %$l-k=1$, 
    $l-k=0$, there is nothing to check. Suppose that for some $n\geq 1$ whenever $l-k<n$, we have that for all $j$ such that $k \leq j \leq l$, $\lambda(r,x,a_k)=\lambda(r,x,a_j)$. Now suppose $l-k=n$.     %There is a smallest $j'$ with the following property:} 
    % \star & the first chain containing $r\cup a_l$ comes between two distinct chains containing $r\cup a_{j'}$ in $\Gamma$. 
    % \sout{There is a smallest $j'$ such that $a_{j'}$ sandwiches $a_l$} \sout{If $l-k>1$, 
    Since $n\geq 2$, there is a smallest $j'$ 
    such that $j'<l$ and $r\cup a_{j'}$ is contained in some $m_p$, where 
    $i_{j'} < i_l < p$.
    Thus, we have $\lambda(r,x,a_{j'})= \lambda(r,x,a_l)$ by Definition \ref{def:cc label from tcl}. Then since $\lambda(r,x,a_l)=\lambda(r,x,a_k)$, we have $\lambda(r,x,a_{j'})=\lambda(r,x,a_k)$ as well. 
    
    If $j'\leq k$, 
    % \textcolor{blue}{then $a_{j'}$ sandwiches every atom $a_j$ such that $k \leq j \leq l$ }
    then for all $j$ such that $k<j<l$ we have
    $i_{j'}\leq i_{k} < i_j <i_l < p$. We therefore have $\lambda(r,x,a_k)=\lambda(r,x,a_j)$ by \cref{def:cc label from tcl} for all $j$ such that $k \leq j\leq l$. 
    
    Otherwise, $k<j'<l$. Since $j'-k < l-k=n$ and $\lambda(r,x,a_{j'})=\lambda(r,x,a_k)$, the induction hypothesis says that $\lambda(r,x,a_k)=\lambda(r,x,a_j)$ for all $j$ such that $k \leq j \leq j'$. And since $l-j'<l-k=n$ and $\lambda(r,x,a_{j'}) = \lambda(r,x,a_l)$, the induction hypothesis says that $\lambda(r,x,a_j)=\lambda(r,x,a_l)$ for all $j$ such that $j' \leq j \leq l$. Therefore, $\lambda(r,x,a_k)=\lambda'(r,x,a_j)$ for all $k \leq j \leq l$. 

    For (ii), observe the following. First, by \cref{def:cc label from tcl}, a cover relation with root $r$ is labeled by the position in $\Gamma$ of some maximal chain that contains $r$. Let $x\lessdot a_1\lessdot b_1$ be contained in $m$ and $x\lessdot a_2\lessdot b_2$ be contained in $m'$ with $a_1\neq a_2$. %While $b_1$ might equal $b_2$, $a_1$ and $a_2$ are distinct. 
    The label $\lambda(r\cup a_1,a_1,b_1)$ is the position in $\Gamma$ of some maximal chain which contains $r\cup a_1$ while the label $\lambda(r\cup a_2,a_2,b_2)$ is the position in $\Gamma$ of some maximal chain which contains $r\cup a_2$. No maximal chain of $P$ contains both $r\cup a_1$ and $r \cup a_2$ because $a_1$ and $a_2$ are distinct. Therefore, $\lambda(r\cup a_1,a_1,b_1) \neq \lambda(r\cup a_2,a_2,b_2)$. This in turn implies that the label sequence associated to $m$ is distinct from the label sequence associated to $m'$. %$\lambda(m)\neq \lambda(m')$. 
    
    Now for any distinct maximal chains $c$ and $c'$ of a rooted interval $[x,y]_r$, there is some minimal $z<y$ such that $c^z=c'^z$ but $c$ and $c'$ contain distinct atoms of $[z,y]$. Then by the previous argument the label sequence associated to $c$ is not a prefix of the label sequence associated to $c'$. %$\lambda(c)$ is not a prefix of $\lambda(c')$.
\end{proof}

We now turn to showing that any poset that admits a TCL-labeling also admits a CC-labeling in \cref{lem:lex cc from tcl gives shelling}. We first make an observation about initial sections of label sequences that arise from TCL-labelings. It is used in the two upcoming technical propositions concerning topological descents and their preservation under a relabeling given by Definition \ref{def:cc label from tcl}. Both propositions are necessary for the proof of \cref{lem:lex cc from tcl gives shelling}. 
%\gsay{I think we should discuss moving this section first}
\begin{remark}\label{prop:labels of out and back roots}
    Let $P$ be a finite bounded poset with TCL-labeling $\lambda$ taking values in the integers. Fix a total order $\Gamma: m_1,\dots, m_t$ on the maximal chains of $P$ which is consistent with the lexicographic order on maximal chains induced by $\lambda$. If $r$ is a saturated chain containing $\zh$ which is contained in $m_i$ and $m_k$ with $i<k$, then for each $j$ with $i\leq j\leq k$, $\lambda(r)$ is an initial section of the label sequence $\lambda(m_j)$. 
   
\end{remark}

\begin{proposition}\label{prop:cc relabel of descent} Let $P$ be a finite bounded poset with TCL-labeling $\lambda$ taking values in the integers. Suppose that maximal chain $m$ has a topological descent at $x\lessdot y \lessdot z$ and that $c$ given by $x\lessdot x_1 \lessdot x_2 \lessdot \dots \lessdot z$ is the topologically ascending maximal chain of $[x,z]_{m^x}$. Then either $\lambda(m^x,x,x_1)<\lambda(m^x,x,y)$ or $\lambda(m^x,x,x_1)=\lambda(m^x,x,y)$ and $\lambda(m^x\cup x_1,x_1,x_2)<\lambda(m^x\cup y,y,z)$.
\end{proposition}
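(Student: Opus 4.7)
My plan is to exploit the fact (invoked in Remark~\ref{rmk:lin ext of label set preserves tcl}) that for a TCL-labeling $\lambda$, the unique topologically ascending maximal chain of any rooted interval has a label sequence that strictly lexicographically precedes the label sequences of all other maximal chains in that interval. In the setting of the proposition, this means $\lambda(c)=(\lambda(m^x,x,x_1),\lambda(m^x\cup x_1,x_1,x_2),\dots)$ strictly lexicographically precedes the length-two sequence $(\lambda(m^x,x,y),\lambda(m^x\cup y,y,z))$; indeed, since $x\lessdot y \lessdot z$ is a topological descent it is not topologically ascending, so $c\neq x\lessdot y \lessdot z$ and the two are distinct maximal chains of the rooted interval $[x,z]_{m^x}$.

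From that strict lex inequality I would proceed by a position-by-position case analysis. If the first labels differ, they must differ in $c$'s favor, i.e., $\lambda(m^x,x,x_1)<\lambda(m^x,x,y)$, which is the first alternative in the conclusion; the reverse strict inequality would contradict strict precedence. If the first labels are equal, I would turn to the second labels: the strict inequality $\lambda(m^x\cup x_1,x_1,x_2)<\lambda(m^x\cup y,y,z)$ gives the second alternative, while the reverse strict inequality again contradicts precedence.

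The only remaining subcase---and the one requiring the most care---is when the first two labels agree. In that situation the length-two descent sequence is a prefix of $\lambda(c)$, or equals $\lambda(c)$ if $c$ itself has length two. Under the dictionary convention for lexicographic order noted in Section~\ref{sec:background} (where a shorter sequence precedes a longer one that extends it), the descent sequence then weakly precedes $\lambda(c)$, contradicting strict precedence. Hence this subcase cannot occur, and one of the two stated alternatives must hold. The principal bookkeeping subtlety is that $[x,z]$ need not be graded, so $c$ can be strictly longer than the chain through $y$; the dictionary convention must therefore be invoked explicitly to close the equal-equal subcase.
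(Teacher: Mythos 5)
Your proposal is correct and follows essentially the same route as the paper: the paper's (one-line) proof likewise reduces everything to the fact that the label sequence of the topologically ascending chain $c$ strictly lexicographically precedes that of $x\lessdot y\lessdot z$ in $[x,z]_{m^x}$, from which the two alternatives follow by comparing the first two entries. Your extra care in the equal-equal subcase (ruling it out via the dictionary/prefix convention, since $[x,z]$ need not be graded) is a correct and worthwhile elaboration of a step the paper leaves implicit.
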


\begin{proof}
    This follows because the label sequence associated to $c$ with respect to $m^x$ %$\lambda(m^x,c)$ 
    strictly precedes lexicographically the label sequence associated to $x\lessdot y \lessdot z$ with respect to $m^x$ %\lambda(m^x,x\lessdot y \lessdot z)$.
\end{proof}

\begin{proposition}\label{prop: different lambda lable different lambda prime label}
    Let $P$ be a finite, bounded poset with a TCL-labeling $\lambda$. By \cref{rmk:lin ext of label set preserves tcl}, we may take $\lambda$ to be a labeling by integers. Fix a total order $\Gamma: m_1,\dots, m_t$ on the maximal chains of $P$ that agrees with the lexicographic order on maximal chains induced by $\lambda$. Let $\lambda'$ be the CE-labeling of $P$ determined by $\Gamma$ as in \cref{def:cc label from tcl}. Suppose $x\lessdot u\lessdot b$ and $x\lessdot y \lessdot d$ with $u\neq y$ and let $r$ be a root of $x$. If $\lambda(r,x,u)<\lambda(r,x,y)$, then $\lambda'(r,x,u)<\lambda'(r,x,y)$. Also, if %$\lambda'(r,x,u)=\lambda'(r,x,y)$ 
    $\lambda(r,x,u)=\lambda(r,x,y)$
    and $\lambda(r\cup u,u,b)<\lambda(r\cup y,y,d)$, then $\lambda'(r\cup u,u,b)<\lambda'(r\cup y,y,d)$. 
\end{proposition}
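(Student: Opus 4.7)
The plan is to translate each $\lambda$-inequality in the hypothesis into a statement about positions in $\Gamma$, then use the block structure from \cref{prop:same cc relable label seqs}(i) and \cref{rmk:first chain label} to pin down the $\lambda'$-labels. Enumerate the atoms of $[x,\oneh]_r$ as $a_1,\ldots,a_s$ by first appearance in $\Gamma$ with root $r$, and let $i_j$ denote the index of the first chain containing $r\cup a_j$, so that $u = a_k$ and $y = a_l$. Because $\Gamma$ refines the lex order induced by $\lambda$ and $\lambda(r,x,u) < \lambda(r,x,y)$, every maximal chain containing $r\cup u$ strictly precedes every maximal chain containing $r\cup y$ in $\Gamma$; in particular $k < l$, and $a_k$ cannot sandwich $i_l$.

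The key technical ingredient is a sandwiching lemma: if $a_h$ appears with $r$ in chains $m_{k'}, m_{l'}$ with $k' < i_j < l'$, then $\lambda(r,x,a_h) = \lambda(r,x,a_j)$. Indeed, $i_h \leq k' < i_j$ together with lex order forces $\lambda(r,x,a_h) \leq \lambda(r,x,a_j)$, while $l' > i_j$ forces the reverse inequality. The main obstacle is to use this lemma to prove $\lambda'(r,x,u) \neq \lambda'(r,x,y)$. Suppose for contradiction that they are equal. Then \cref{prop:same cc relable label seqs}(i) forces $a_k, a_{k+1},\ldots, a_l$ to share this common $\lambda'$-value, and unrolling \cref{def:cc label from tcl} produces a strictly decreasing sequence $l = h_0 > h_1 > \cdots > h_s$ lying in this block such that $a_{h_{j+1}}$ sandwiches $i_{h_j}$ for each $0 \le j < s$, and the recursion terminates at $h_s$ with $\lambda'(r,x,a_{h_s}) = i_{h_s}$ (so $a_{h_s}$ is the first atom of its block). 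Iterating the sandwiching lemma yields $\lambda(r,x,a_{h_s}) = \lambda(r,x,a_l) = \lambda(r,x,y)$. On the other hand, \cref{rmk:first chain label} gives $\lambda'(r,x,u) \leq i_k$, hence $i_{h_s} \leq i_k$ and $h_s \leq k$; then comparing $m_{i_{h_s}}$ and $m_{i_k}$ lexicographically gives $\lambda(r,x,a_{h_s}) \leq \lambda(r,x,u) < \lambda(r,x,y)$, a contradiction. Combined with \cref{prop:same cc relable label seqs}(i), this places $u$ and $y$ in distinct constant blocks, with $\lambda'(r,x,u) = i_{p_1}$ for the starting index $p_1 \leq k$ of the block containing $u$ and $\lambda'(r,x,y) = i_{p_2}$ for the starting index $p_2 > k$ of the block containing $y$. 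Since $p_1 < p_2$ we conclude $\lambda'(r,x,u) < \lambda'(r,x,y)$.

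For the second claim, I would apply the same sandwiching recursion, now to the atom ordering of $[y,\oneh]_{r\cup y}$ with $d$ playing the role of $a_l$ and lying in some block whose first atom is $a_p$. As above, the iterated sandwiching lemma gives $\lambda(r\cup y, y, a_p) = \lambda(r\cup y, y, d)$, and by construction $\lambda'(r\cup y, y, d) = j_p$, the index of the first chain in $\Gamma$ containing $r\cup y \cup a_p$. The label sequence of $m_{j_p}$ then begins $\lambda(r)\cdot\lambda(r,x,y)\cdot\lambda(r\cup y, y, d)\cdot\ldots$, while the label sequence of the first chain $m_A$ containing $r\cup u\cup b$ begins $\lambda(r)\cdot\lambda(r,x,u)\cdot\lambda(r\cup u, u, b)\cdot\ldots$. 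The hypotheses $\lambda(r,x,u) = \lambda(r,x,y)$ and $\lambda(r\cup u, u, b) < \lambda(r\cup y, y, d)$, together with the lex order, force $A < j_p$, and finally \cref{rmk:first chain label} gives $\lambda'(r\cup u, u, b) \leq A < j_p = \lambda'(r\cup y, y, d)$.
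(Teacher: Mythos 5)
Your proposal is correct and takes essentially the same approach as the paper: both rest on the block structure of \cref{prop:same cc relable label seqs}(i), the observation that the earliest atom of each constant block receives the direct index of its first chain in $\Gamma$, the sandwiching/prefix property (\cref{prop:labels of out and back roots}) to convert $\lambda'$-equalities into $\lambda$-equalities, and the compatibility of $\Gamma$ with the lexicographic order induced by $\lambda$. The only differences are organizational---you prove the first claim by contradiction where the paper directly compares the indices of the two block-initial atoms, and in the second claim you bound $\lambda'(r\cup u,u,b)$ above by $A$ via \cref{rmk:first chain label} rather than computing it exactly---neither of which changes the argument.
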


\begin{proof}
     Let $a_1,\dots, a_s$ be the atoms of $[x,\oneh]_r$ in the order in which they first appear with $r$ in a maximal chain in $\Gamma$. 

    Suppose that $\lambda(r,x,u)<\lambda(r,x,y)$. 
    %Thus, $\lambda(r\cup u)$ strictly precedes $\lambda(r\cup y)$ lexicographically. 
    Let $a_j$ be the earliest atom such that $\lambda'(r,x,a_j)=\lambda'(r,x,u)$. Notice that by \cref{def:cc label from tcl}, $\lambda'(r,x,a_j)=i_j$ where $m_{i_j}$ is the first maximal chain in $\Gamma$ containing $r\cup a_j$. Otherwise, there would be some earlier atom than $a_j$, say $a$, such that $\lambda'(r,x,a)=\lambda'(r,x,u)$. Similarly, let $a_k$ be the earliest atom such that $\lambda'(r,x,a_k)=\lambda'(r,x,y)$. So,  $\lambda'(r,x,a_k)=i_k$ where $m_{i_k}$ is the first maximal chain in $\Gamma$ containing $r\cup a_k$. By \cref{prop:same cc relable label seqs} (i) and \cref{prop:labels of out and back roots}, $\lambda(r,x,a_j)=\lambda(r,x,u)$ and $\lambda(r,x,a_k)=\lambda(r,x,y)$. Since $\lambda(r, x, u) < \lambda(r, x, y)$, we have $\lambda(r,x,a_j)< \lambda(r,x,a_k)$ and thus $i_j<i_k$. Thus, $\lambda'(r,x,u)<\lambda'(r,x,y)$. 

    Now suppose that $\lambda(r,x,u)=\lambda(r,x,y)$ %$\lambda'(r,x,u)=\lambda'(r,x,y)$ 
    and $\lambda(r\cup u,u,b)<\lambda(r\cup y,y,d)$. %By \cref{prop:same cc relable label seqs} (i), $\lambda(r,x,u)=\lambda(r,x,y)$. 
    Notice that $r\cup u\neq r\cup y$, but $\lambda(r\cup u)= \lambda(r\cup y)$, so $\lambda(r\cup u\cup b)$ strictly precedes $\lambda(r\cup y\cup d)$ lexicographically. Let $v_l$ be the earliest atom of $[u,\oneh]_{r\cup u}$ such that $\lambda'(r\cup u,u,v_l) = \lambda'(r\cup u,u,b)$. Notice that by \cref{def:cc label from tcl}, $\lambda'(r\cup u,u,v_l)=i_l$ where $m_{i_l}$ is the first maximal chain in $\Gamma$ containing $r\cup u\cup v_l$.
    Similarly, let $w_n$ be the earliest atom of $[y,\oneh]_{r\cup y}$ such that $\lambda'(r\cup y,y,w_n) = \lambda'(r\cup y,y,d) = i_n$ where $m_{i_n}$ is the first maximal chain in $\Gamma$ containing $r\cup y\cup w_n$. %Again by \cref{def:cc label from tcl}, $\lambda'(r\cup y,y,w_n)=i_n$ where $m_{i_n}$ is the first maximal chain in $\Gamma$ containing $r\cup y\cup w_n$. 
    Again by \cref{prop:same cc relable label seqs} (i) and \cref{prop:labels of out and back roots}, we have $\lambda(r\cup u,u,v_l) = \lambda(r\cup u,u,b)$ and $\lambda(r\cup y,y,w_n) = \lambda(r\cup y,y,d)$. Since $\lambda(r\cup u,u,b)<\lambda(r\cup y,y,d)$, we have $\lambda(r\cup u,u,v_l)<\lambda(r\cup y,y,w_n)$ and thus $i_l<i_n$. Thus, $\lambda'(r\cup u,u,b)<\lambda'(r\cup y,y,d)$.
\end{proof}

We are now ready to prove our main result of this section, \cref{lem:lex cc from tcl gives shelling}, which yields \cref{thm: tcl iff cc} when combined with Theorem 5.8 in \cite{HerStad}. 

\begin{lemma}\label{lem:lex cc from tcl gives shelling}
     Let $P$ be a finite bounded poset with a TCL-labeling $\lambda$. By \cref{rmk:lin ext of label set preserves tcl}, we may take $\lambda$ to be a labeling by integers. Fix a total order $\Gamma: m_1,\dots, m_t$ on the maximal chains of $P$ that agrees with the lexicographic order on maximal chains induced by $\lambda$. Let $\lambda'$ be the CE-labeling of $P$ determined by $\Gamma$ as in \cref{def:cc label from tcl}. Then $\lambda'$ is a CC-labeling. 
\end{lemma}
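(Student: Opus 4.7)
The plan is to verify in turn the three conditions defining a CC-labeling for $\lambda'$: distinctness of label sequences on distinct maximal chains in each rooted interval, the no-prefix condition, and the existence of a unique topologically ascending maximal chain in each rooted interval.

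The first two conditions follow largely from \cref{prop:same cc relable label seqs}(ii). When $P$ is nongraded, the proposition directly yields the no-prefix condition, and hence distinctness, for maximal chains of any rooted interval. When $P$ is graded, the first half of (ii) gives distinctness of $\lambda'$-label sequences for chains in $[x, \hat{1}]_r$ through distinct atoms; a short induction on interval length, handling the case of chains sharing a first atom by restricting to the sub-interval above that atom, extends distinctness to arbitrary distinct maximal chains in any rooted interval $[x, y]_r$, and no-prefix is then automatic because gradedness forces all such chains to have the same length.

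For the remaining condition, I would show that $\lambda'$ has the same topological descents as $\lambda$; by \cref{rmk: descent sets} this implies $\lambda'$ is a TCL-labeling. Fix a rooted length-$2$ interval $[u, w]_r$ and let $u \lessdot v^* \lessdot w$ be its unique $\lambda$-topologically ascending chain. For any other chain $u \lessdot v \lessdot w$ in $[u, w]_r$, \cref{prop:cc relabel of descent} yields either (a) $\lambda(r, u, v^*) < \lambda(r, u, v)$, or (b) $\lambda(r, u, v^*) = \lambda(r, u, v)$ together with $\lambda(r \cup v^*, v^*, w) < \lambda(r \cup v, v, w)$. In case (a), \cref{prop: different lambda lable different lambda prime label} immediately gives $\lambda'(r, u, v^*) < \lambda'(r, u, v)$, so the $v^*$-chain lex-precedes at the first label. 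In case (b), \cref{prop: different lambda lable different lambda prime label} yields $\lambda'(r \cup v^*, v^*, w) < \lambda'(r \cup v, v, w)$; combined with the first-label inequality $\lambda'(r, u, v^*) \leq \lambda'(r, u, v)$, this will again give lex precedence.

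The main obstacle is establishing this first-label inequality in case (b). I would first observe a monotonicity lemma: the labels $\lambda'(r, u, a_1), \ldots, \lambda'(r, u, a_s)$ are non-decreasing along the atom order of $[u, \hat{1}]_r$ induced by first appearance in $\Gamma$; this follows from \cref{prop:same cc relable label seqs}(i) since atoms sharing a $\lambda'$-value form consecutive runs, each run's common value equals the first-appearance index of its earliest atom, and these indices strictly increase across runs. Assuming for contradiction $\lambda'(r, u, v^*) > \lambda'(r, u, v)$, monotonicity places $v$ strictly before $v^*$ in atom order and in a strictly earlier run; the sandwich rule of \cref{def:cc label from tcl} then forces $v$ to occur in no chain of $\Gamma$ at position greater than $i_{v^*}$, so every chain containing $r \cup v$ lies strictly before position $i_{v^*}$ in $\Gamma$. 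Let $C_{v^*}$ and $C_v$ be the $\Gamma$-earliest maximal chains of $P$ containing $r \cup v^* \cup w$ and $r \cup v \cup w$, respectively. Since $\Gamma$ agrees with the $\lambda$-lex order, $\lambda(r, u, v^*) = \lambda(r, u, v)$, and $\lambda(r \cup v^*, v^*, w) < \lambda(r \cup v, v, w)$, the chain $C_{v^*}$ lex-precedes $C_v$ under $\lambda$ and so occurs earlier in $\Gamma$. But $C_v$ contains $r \cup v$ and therefore lies strictly before $i_{v^*}$, while $C_{v^*}$ contains $r \cup v^*$ and therefore lies at position at least $i_{v^*}$, a contradiction. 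Hence $\lambda'(r, u, v^*) \leq \lambda'(r, u, v)$, the $v^*$-chain's $\lambda'$-label sequence lex-precedes that of every other chain in $[u, w]_r$, and topological descents under $\lambda$ and $\lambda'$ coincide. By \cref{rmk: descent sets}, $\lambda'$ is a TCL-labeling, and combined with the distinctness and no-prefix conditions, $\lambda'$ is a CC-labeling.
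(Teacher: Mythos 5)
Your proposal is correct and follows essentially the same route as the paper: the same appeal to \cref{prop:same cc relable label seqs}(ii) for distinctness and the no-prefix condition, and the same strategy of showing $\lambda'$ preserves the topological descents of $\lambda$ via \cref{rmk: descent sets}, \cref{prop:cc relabel of descent}, and \cref{prop: different lambda lable different lambda prime label}, with the only real work being the first-label inequality $\lambda'(r,u,v^*)\leq\lambda'(r,u,v)$ in case (b). There you diverge in the local argument: the paper compares the earliest atoms carrying the labels of $v^*$ and $v$ and does a case analysis on their relative positions, whereas you prove that the $\lambda'$-labels are non-decreasing along the atom order and then derive a contradiction by comparing the $\Gamma$-positions of the earliest chains through $r\cup v$ and $r\cup v^*$ against the $\lambda$-lexicographic order; both arguments are valid. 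One cosmetic caveat: in a non-graded poset the interval $[u,w]_r$ determined by a pair of covers $u\lessdot v\lessdot w$ need not have length $2$, so the topologically ascending chain should be written $u\lessdot v^*\lessdot w^*\lessdot\cdots\lessdot w$ (as the paper does), with $C_{v^*}$ taken through $r\cup v^*\cup w^*$; your argument goes through verbatim with this adjustment.
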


\begin{proof}
    By \cref{prop:same cc relable label seqs} (ii), in each rooted interval, $\lambda'$ gives distinct label sequences to distinct maximal chains and no label sequence of a maximal chain is a prefix of the label sequence of any other maximal chain. Thus, it suffices to show that $\lambda'$ preserves the topological descents of $\lambda$ (see Remark \ref{rmk: descent sets}). Suppose that the maximal chain $m$ has a topological descent at $x\lessdot y \lessdot z$ with respect to $\lambda$. Let $r=m^x$ be the chain that agrees with $m$ up to $x$. As in \cref{def:cc label from tcl}, take the atoms $a_1,\dots,a_s$ of $[x,\oneh]_r$ in the order in which they first appear with $r$ in a maximal chain in $\Gamma$. So, $y=a_l$ for some $l$. Let $c: x\lessdot a_k \lessdot w \lessdot \dots \lessdot z$ be the topologically ascending maximal chain of $[x,z]_{r}$ with respect to $\lambda$. By \cref{prop:cc relabel of descent}, either \begin{enumerate}
        \item [(a)] $\lambda(r,x,a_k)<\lambda(r,x,a_l)$ or

        \item [(b)] $\lambda(r,x,a_k)=\lambda(r,x,a_l)$ and $\lambda(r\cup a_k,a_k,w)<\lambda(r\cup a_l,a_l,z)$.
    \end{enumerate}
    If (a) holds, \cref{prop: different lambda lable different lambda prime label} implies $\lambda'(r, x, a_k) < \lambda'(r, x, a_l$), so $m$ has a topological descent at $x\lessdot a_l\lessdot z$ with respect to $\lambda'$. 
    
    If (b) holds, we will show that either we have $\lambda'(r,x,a_k)< \lambda'(r,x,a_l)$ or both $\lambda'(r,x,a_k) = \lambda'(r,x,a_l)$ and $\lambda'(r\cup a_k,a_k,w)<\lambda'(r\cup a_l,a_l,z)$. In either case, $x \lessdot a_l \lessdot z$ is a topological descent with respect to $\lambda'$.

    Suppose that (b) holds. We first establish some notation. Let $a_{l'}$ be the earliest atom of $[x, \hat{1}]_r$ such that $\lambda'(r,x,a_{l'}) = \lambda'(r,x,a_l)$ and let $a_{k'}$ be the earliest atom of $[x, \hat{1}]_r$ such that $\lambda'(r,x,a_{k'}) = \lambda'(r,x,a_k)$. Let $m_{i_{l'}}$ be the first chain in $\Gamma$ containing $r\cup a_{l'}$ and let $m_{i_{k'}}$ be the first chain in $\Gamma$ containing $r\cup a_{k'}$. Thus, we have $\lambda'(r,x,a_{k'}) = \lambda'(r,x,a_k) = i_{k'}$ and $\lambda'(r,x,a_{l'}) = \lambda'(r,x,a_l) = i_{l'}$. We also necessarily have $k'\leq k$ and $l'\leq l$. We are now ready to show that $\lambda'(r, x, a_k) \leq \lambda'(r, x, a_l)$.

    If $k'\leq l'$, then $i_{k'}\leq i_{l'}$ so $\lambda'(r,x,a_k)\leq \lambda'(r,x,a_l)$. If $l'<k'$, we must consider the two cases that either $l<k$ or $k<l$. Suppose $k<l$. Observe that $r\cup c$ followed by the portion of $m$ above $z$ is some maximal chain containing $r\cup a_k$ which precedes $m$, so $m_{i_k}$, the first maximal chain containing $r\cup a_k$, precedes $m$. Since $l<k$, $m_{i_l}$ precedes $m_{i_k}$ and $m_{i_k}$ precedes $m$. Since both $m_{i_l}$ and $m$ contain $r\cup a_l$ while $m_{i_k}$ is the first maximal chain containing $r\cup a_k$, \cref{def:cc label from tcl} gives $\lambda'(r,x,a_k) = \lambda'(r,x,a_l)$. If $k < l$, then knowing that $\lambda'(r, x, a_{l'})=\lambda'(r, x, a_l)$ and $l' < k' \leq k < l$ allows us to use \cref{prop:same cc relable label seqs} (i) to conclude that $\lambda'(r,x,a_k)= \lambda'(r,x,a_l)$.

    If $\lambda'(r, x, a_{k}) < \lambda'(r, x, a_{l})$, then $x \lessdot a_l \lessdot z$ is a topological descent in $m$ with respect to $\lambda'$ and we are done. It remains to consider the case where $\lambda'(r, x, a_k)=\lambda'(r, x, a_l)$. Recall that we are in case (b), that is, $\lambda(r, x, a_k)=\lambda(r, x, a_l)$ and 
    $\lambda(r\cup a_k,a_k,w)<\lambda(r\cup a_l,a_l,z)$. 
    So, by \cref{prop: different lambda lable different lambda prime label}, $\lambda'(r\cup a_k,a_k,w)<\lambda'(r\cup a_l,a_l,z)$. Thus, $x\lessdot a_l\lessdot z$ is a topological decent in $m$ with respect to $\lambda'$.  
    
\end{proof}

\begin{remark}\label{rmk:cc and tcl are the same}
    Notice that the proof of \cref{lem:lex cc from tcl gives shelling} shows that for a given TCL-labeling of a bounded poset $P$, the corresponding CC-labeling defined by \cref{def:cc label from tcl} has topological descents in exactly the same locations as the TCL-labeling. 
\end{remark} 
    
\begin{theorem}
    A finite bounded poset $P$ is TCL-shellable if and only if $P$ is CC-shellable.
    \label{thm: tcl iff cc}
\end{theorem}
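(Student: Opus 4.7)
The plan is to prove the two implications separately, with the hard direction being essentially handled already by \cref{lem:lex cc from tcl gives shelling}.

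First I would dispatch the easy direction, CC-shellable implies TCL-shellable, by simply unpacking definitions. A CC-labeling of $P$ is by definition a CE-labeling in which every rooted interval $[x,y]_r$ possesses a unique topologically ascending maximal chain, with the additional requirements that all label sequences in the interval are distinct and that none is a prefix of another. A TCL-labeling, by contrast, requires only the unique topologically ascending maximal chain. So any CC-labeling of $P$ is already a TCL-labeling of $P$, and this direction needs no further argument.

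For the forward direction, TCL-shellable implies CC-shellable, the plan is to invoke \cref{lem:lex cc from tcl gives shelling} directly. I would start with a TCL-labeling $\lambda$ of $P$, use \cref{rmk:lin ext of label set preserves tcl} to replace $\lambda$ with an integer-valued TCL-labeling (which we continue to call $\lambda$), and then fix any total order $\Gamma: m_1,\dots,m_t$ on the maximal chains of $P$ that refines the lexicographic order induced by $\lambda$. The CE-labeling $\lambda'$ produced from $\Gamma$ by \cref{def:cc label from tcl} is a CC-labeling of $P$ by \cref{lem:lex cc from tcl gives shelling}, which is precisely what is needed.

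Combining the two directions gives the biconditional. I do not expect any real obstacle here, since all of the technical work, namely verifying that the relabeling of \cref{def:cc label from tcl} preserves both the topological descent structure (via \cref{prop:cc relabel of descent} and \cref{prop: different lambda lable different lambda prime label}) and the CC-labeling conditions on distinctness and non-prefix behavior (via \cref{prop:same cc relable label seqs}(ii)), has already been carried out in the statement and proof of \cref{lem:lex cc from tcl gives shelling}. The only remaining task is to observe that the reverse implication, CC-shellable implies TCL-shellable, is immediate from the definitions, so the theorem follows at once.
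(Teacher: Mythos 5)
Your proposal is correct and matches the paper's proof: the forward direction is exactly an invocation of \cref{lem:lex cc from tcl gives shelling} (after passing to integer labels via \cref{rmk:lin ext of label set preserves tcl}). The only cosmetic difference is that for the backward direction the paper cites Theorem 5.8 of \cite{HerStad}, whereas you observe directly that, with the definitions as stated (both phrased via topological ascents), every CC-labeling is already a TCL-labeling; this is the same observation the paper itself makes in the introduction, so your shortcut is sound.
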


\begin{proof}
    The forward direction is \cref{lem:lex cc from tcl gives shelling}. The backward direction is Theorem 5.8 in \cite{HerStad}. %\textcolor{blue}{Grace and Tricia's paper (add citation)}.
\end{proof}

\section{Recursive First Atom Sets}\label{sec:RFAS}

In the spirit of recursive atom orderings and generalized recursive atom orderings, we now introduce recursive first atom sets (RFAS) as a new method for producing shelling orders of finite bounded posets. Posets exist that admit an RFAS but that do not admit an RAO or GRAO (see Figure \ref{fig: ccnotclex} and Theorem \ref{thm: tcl iff RFAS}). These RFASs have the benefit of only requiring us to specify a first atom for each rooted interval, as opposed to requiring us to specify total orderings on atoms of upper rooted intervals.  

As the following definition is somewhat technical, the reader might find it useful to refer to Figures \ref{fig: why we need def} and \ref{fig:RFAS condition ii} and Examples \ref{ex: why we need def} and \ref{ex:not shellable but RFASi} as they read.

\begin{definition} \label{def:RFAS}
    The bounded poset $P$ admits a \textbf{recursive first atom set} (\textbf{RFAS}) if the length of $P$ is 1 or if for each $x < y$ in $P$ and each root $r$ for $[x, y]$, there exists an atom of $[x, y]$, denoted $\Omega(r, x, y)$ and called the \textbf{first atom} of $[x, y]_r$, satisfying the following whenever $a$ is an atom of $[x,y]_r$ and $b$ is the first atom of $[a,y]_{r\cup a}$:
    \begin{enumerate}
        \item [(i)] $a$ is the first atom in $[x,y]_r$ if and only if $a$ is the first atom in $[x,b]_r$
        \item [(ii)] %\gsay{can we swap the roles of $a$ and $a'$ to be more consistent with (i)? And maybe swap the order of our sequence so $a_1$ is the first atom of $[x, y]_r$?} 
        if $a$ is not the first atom in $[x,y]_r$ and $a'$ is the first atom in $[x,y]_r$, then there exist atoms $a_1, \dots, a_p$ of $[x,y]$ and elements $b_1,\dots, b_{p-1}$ such that:
        \begin{enumerate} 
            \item $a_p$ is the first atom of $[x,b]_r$, 
            \item $a_1=a'$, and 
            \item $b_i$ is the first atom of $[a_{i+1},y]_{r\cup a_i}$ and $a_{i}$ is the first atom of $[x,b_i]_r$ for all $i$ with $1 \leq i\leq p-1$
        \end{enumerate}
    \end{enumerate}

\end{definition}

\begin{figure}

\begin{tikzpicture}[scale=1.25]
\node at (-1.7,1.5) {$P=$};
\node [below] at (0,0) {$x$}; 
\draw [fill] (0,0) circle [radius=0.05]; 
\node [left] at (-1,1) { $a'$}; 
\draw [fill] (-1,1) circle [radius=0.05]; 
\node [right] at (1,1) { $a$}; 
\draw [fill] (1,1) circle [radius=0.05]; 
\node [left] at (-1,2) { $b'$}; 
\draw [fill] (-1,2) circle [radius=0.05]; 
\node [right] at (1,2) { $b$}; 
\draw [fill] (1,2) circle [radius=0.05]; 
\node [above] at (0,3) { $y$}; 
\draw [fill] (0,3) circle [radius=0.05];
\draw (1,1)--(-1,2);
\draw (0,0)--(-1,1)--(-1,2)--(0,3);
\draw (0,0)--(1,1)--(1,2)--(0,3);
\draw (-1,1)--(1,2);
\draw (1,1)--(-1,2);
%\node at (0,-1.6) {}; 
\end{tikzpicture}
 \hskip 1.3cm
  \begin{tikzpicture}[scale=1.25]
  \node at (-1.7,1.5) {$Q=$};
\node [below] at (0,0) {$x$}; 
\draw [fill] (0,0) circle [radius=0.05]; 
\node [left] at (-1,1) { $a'$}; 
\draw [fill] (-1,1) circle [radius=0.05]; 
\node [right] at (1,1) { $a$}; 
\draw [fill] (1,1) circle [radius=0.05]; 
\node [left] at (-1,2) { $b'$}; 
\draw [fill] (-1,2) circle [radius=0.05]; 
\node [right] at (1,2) { $b$}; 
\draw [fill] (1,2) circle [radius=0.05]; 
\node [above] at (0,3) { $y$}; 
\draw [fill] (0,3) circle [radius=0.05];
\node [right] at (2,1) { $a''$}; 
\draw [fill] (2,1) circle [radius=0.05];
\node [right] at (2,2) { $b''$}; 
\draw [fill] (2,2) circle [radius=0.05];

\draw (1,1)--(-1,2);
\draw (0,0)--(-1,1)--(-1,2)--(0,3);
\draw (0,0)--(1,1)--(1,2)--(0,3);
\draw (-1,1)--(1,2);
\draw (1,1)--(-1,2);
\draw (0,0)--(2,1)--(2,2)--(0,3);
%\node at (0,-1.6) {}; 
\end{tikzpicture}
% % \begin{tikzpicture}[scale=1.25]
% % \node [below] at (0,0) {$x$}; 
% % \draw [fill] (0,0) circle [radius=0.05]; 
% % \node [left] at (-1,1) { $a'$}; 
% \draw [fill] (-1,1) circle [radius=0.05]; 
% \node [right] at (1,1) { $a$}; 
% \draw [fill] (1,1) circle [radius=0.05]; 
% \node [left] at (-1,2) { $b'$}; 
% \draw [fill] (-1,2) circle [radius=0.05]; 
% \node [right] at (1,2) { $b$}; 
% \draw [fill] (1,2) circle [radius=0.05]; 
% \node [above] at (0,3) { $y$}; 
% \draw [fill] (0,3) circle [radius=0.05];
% \draw (1,1)--(-1,2);
% \draw (0,0)--(-1,1)--(-1,2)--(0,3);
% \draw (0,0)--(1,1)--(1,2)--(0,3);
% \draw (-1,1)--(1,2);
% \draw (1,1)--(-1,2);
% %\node at (0,-1.6) {}; 
% \end{tikzpicture}
% \hskip 1cm
% \begin{tikzpicture}[scale=1.25]
% \node [below] at (0,0) {$x$}; 
% \draw [fill] (0,0) circle [radius=0.05]; 
% \node [left] at (-1,1) { $a'$}; 
% \draw [fill] (-1,1) circle [radius=0.05]; 
% \node [right] at (1,1) { $a$}; 
% \draw [fill] (1,1) circle [radius=0.05]; 
% \node [left] at (-1,2) { $b'$}; 
% \draw [fill] (-1,2) circle [radius=0.05]; 
% \node [right] at (1,2) { $b$}; 
% \draw [fill] (1,2) circle [radius=0.05]; 
% \node [above] at (0,3) { $y$}; 
% \draw [fill] (0,3) circle [radius=0.05];
% \draw (1,1)--(-1,2);
% \draw (0,0)--(-1,1)--(-1,2)--(0,3);
% \draw (0,0)--(1,1)--(1,2)--(0,3);
% \draw (-1,1)--(1,2);
% \draw (1,1)--(-1,2);
% %\node at (0,-1.6) {}; 
% \end{tikzpicture}

\caption{The posets $P$ (left) and $Q$ (right) discussed in Example \ref{ex: why we need def} and Example \ref{ex:not shellable but RFASi}, respectively.}
\label{fig: why we need def}
\end{figure}
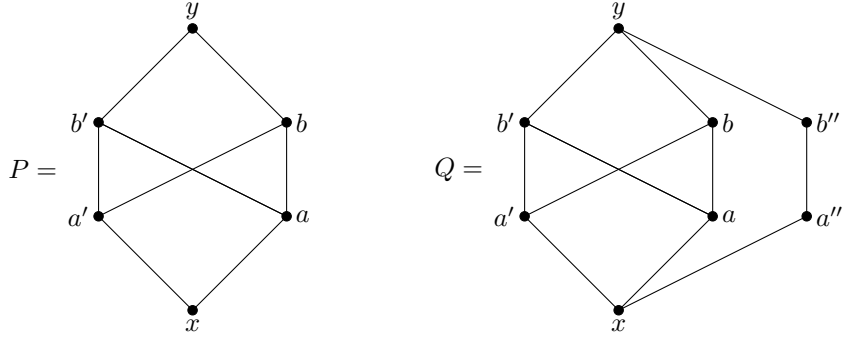

\begin{figure}
\begin{center}
\begin{tikzpicture}[scale=0.75]

\node at (6,1){${\zh}$};
%\draw [fill] (5,0) circle [radius=0.05]{\zh}; 
%RANK 1 NODES
\node at (6,3){${u}$};

%RANK 2 NODES
    \node at (0,5){${a'=a_1}$};
    \node at (2,5){$a_2$};
    \node at (4,5){$a_3$};
    \node at (6,5){$\dots$};
    \node at (8,5){$a_{p-1}$};
    \node at (10,5){$a_p$};
    \node at (12,5){${a}$};

%RANK 3 NODES
%\node at (2,5.2){${w'}$};

%RANK 4 NODES
%\node at (-1,7){\scalebox{0.8}{$\mathbf{c(r\cup \{v',w'\},w',\oneh)}$}};
\node at (0,7){$b_1$};
\node at (2,7){$b_2$};
\node at (4,7){$b_3$};

\node at (8,7){$b_{p-2}$};

\node at (10,7){$b_{p-1}$};

\node at (12,7){$b$};

%Rank 5 NODES 
\node at (6,11){${y}$};

%FIRST EDGES
\draw [dashed] (6,1.3)--node [left] {$\mathbf{r}$}(6,2.8);

%SECOND EDGES -- UP FROM u
\draw (5.8,3.1)--(0.2,4.8);
\draw (6.2,3.1)--(11.8,4.8);
\draw (5.9,3.2)--(2.1,4.8);
\draw (5.9,3.2)--(4.1,4.8);
\draw (6.1,3.2)--(7.9,4.8);
\draw (6.1,3.2)--(9.9,4.8);

%THIRD EDGES
\draw (12,5.2)--(12,6.8);
\draw (10.2,5.2)--(11.8,6.8);
\draw (10,5.2)--(10,6.8);
\draw (8.2,5.2)--(9.8,6.7);
\draw (8,5.2)--(8,6.8);
\draw (6.5,5.2)--(7.8,6.7);
\draw (5.5,5.2)--(4.2,6.8);
\draw (4,5.2)--(4,6.7);
\draw (3.8,5.2)--(2.2,6.7);
\draw (2,5.2)--(2,6.7);
\draw (1.8,5.2)--(0.2,6.7);
\draw (0,5.2)--(0,6.7);

%FOURTH EDGES -- UP TO y
\draw [dashed] (0,7.2)--(5.8,10.8);
\draw [dashed] (2.1,7.2)--(5.9,10.8);
\draw [dashed] (4,7.2)--(6,10.8);
\draw [dashed] (8,7.2)--(6,10.8);
\draw [dashed] (10,7.2)--(6.1,10.8);
\draw [dashed] (12,7.2)--(6.2,10.8);

\end{tikzpicture}

\caption{A diagram illustrating condition (ii) of \cref{def:RFAS}}
\label{fig:RFAS condition ii}
\end{center}
\end{figure}
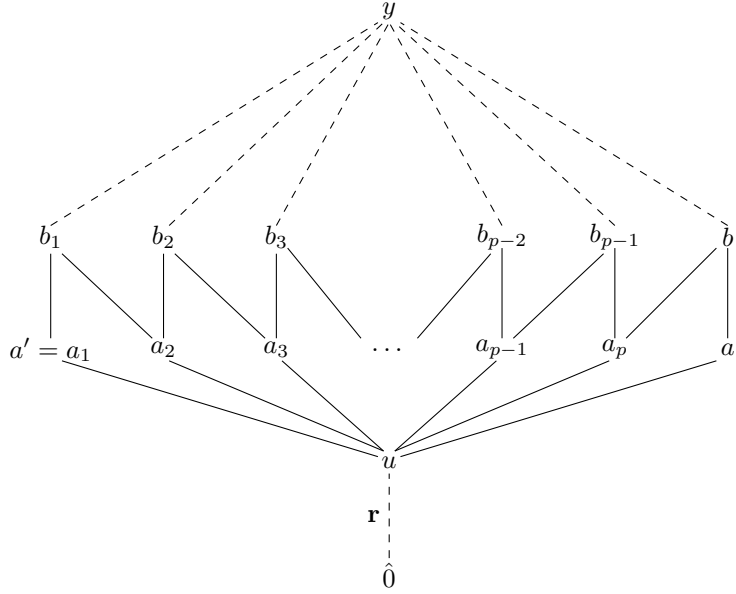

The goal of \cref{def:RFAS} is to specify an atom in every rooted interval of the bounded poset $P$ in such a way that (1) a total order on the maximal chains of $P$ is completely determined by these specified atoms, and (2) this total order on the maximal chains is a shelling order. The sense in which we determine a total order on maximal chains from a specification of atoms is given by the following notion of consistency. Let $\mathcal{C}$ be such a specification of atoms. Say that a total order on maximal chains of $P$ is consistent with $\mathcal{C}$ if, for any rooted interval $[u, v]_r$ in $P$, the earliest maximal chain in the total order that contains $u,v$, and $r$ also contains the specified atom in the interval $[u, v]_r$. Condition (i) is necessary to guarantee that there exists a total order that is consistent with a given specification of atoms. Condition (ii), which is illustrated in \cref{fig:RFAS condition ii}, is necessary to ensure that the resulting total order is a shelling order. The following two examples highlight these points. 

\begin{example}\label{ex: why we need def}
Consider the poset $P$ shown in Figure \ref{fig: why we need def} and the collection $\mathcal{C}$ of first atoms for intervals in $P$ where $a$ is first in $[x, b]$, $a'$ is first in $[x, b']$ and $[x, y]$, and $b$ is first in $[a', y]$ and $[a, y]$. Note there is only one root for each of these intervals, so we are not specifying the root here, and we are not specifying first atoms in intervals containing only one atom. Notice that $\mathcal{C}$ does not satisfy the backwards direction of Definition \ref{def:RFAS} (i). Since $a'$ is first in $[x,y]$ and $b$ is first in $[a',y]$, any total order on maximal chains that is consistent with $\mathcal{C}$ must begin with the chain $m=\{x,a',b,y\}$. Notice that $m$ is then the earliest chain containing $x$ and $b$. However, $m$ does not contain $a$, the first atom in $[x,b]$. So, any such total order is not consistent with $\mathcal{C}$.

Likewise, consider the collection $\mathcal{C'}$ of first atoms where  $a'$ is first in $[x, b]$, $a$ is first in $[x, b']$ and $[x, y]$, and $b$ is first in $[a', y]$ and $[a, y]$.  Notice $\mathcal{C'}$ does not satisfy the forward direction of Definition \ref{def:RFAS} (i). Since $a$ is first in $[x,y]$ and $b$ is first in $[a,y]$, any total order on maximal chains that is consistent with $\mathcal{C'}$ must begin with the chain $m'=\{x,a,b,y\}$. Notice that $m'$ is then the earliest chain containing $x$ and $b$. However, $m'$ does not contain $a'$, the first atom in $[x,b]$. So, any such total order is not consistent with $\mathcal{C'}$.

\end{example}

\begin{example}\label{ex:not shellable but RFASi}
    For the poset $Q$ in \cref{fig: why we need def}, define a first atom set as follows: $a''$ is first in $[x,b'']$ and $[x,y]$, $a$ is first in $[x,b]$, $a'$ is first in $[x,b']$, $b''$ is first in $[a'',y]$, $b'$ is first in $[a,y]$, and $b$ is first in $[a',y]$. Just as in \cref{ex: why we need def}, we need not specify roots for any of these intervals. It can easily be checked that this first atom set satisfies condition (i) but not condition (ii) of \cref{def:RFAS}. Further, $Q$ is clearly not shellable. 
    \end{example}

\begin{remark}\label{rmk: restriction of rfas is rfas}
    Observe that if $P$ admits an RFAS $\Omega$, then restricting $\Omega$ to any closed rooted interval $[x,y]_r$ of $P$ clearly gives an RFAS of $[x,y]$.
\end{remark}

The following several definitions and propositions are used to prove \cref{thm:RFAS implies shellable} which says that an RFAS produces a shelling order of a finite bounded poset. While these intermediate steps are quite technical, in essence they are identifying structures from an RFAS that behave like ascents and descents (or topological ascents and topological descents) in a lexicographic shelling. First we note that although we have shown through \cref{thm:cc but not cl posets} that topological ascents and topological descents are structurally different than ascents and descents, these two different notions of ascents and descents function in exactly the same way in terms of proving that the lexicographic orders from the different labeling types are shelling orders. It is for this purpose of producing shellings that we want to draw parallels between structures in an RFAS and ascents and descents, so we only use the words ``ascent" and ``descent" throughout the rest of this section, even though we could correctly say ``topological ascent" and ``topological descent" as well.

For a finite bounded poset with an RFAS, the following definition identifies a chain in each rooted interval that plays a role similar to that of an ascending chain in a lexicographic shelling.

\begin{definition}\label{def: first atom chain from RFAS}
    Let $P$ be a finite, bounded poset and let $\Omega$ be an RFAS of $P$. For any rooted interval $[x,y]_r$, define the \textbf{first atom chain of $[x,y]_r$}, denoted $c(r,x,y)$, to be the unique maximal chain $c(r,x,y)$ of $[x,y]_r$ given by $x=z_0\lessdot z_1\lessdot \dots \lessdot z_q=y$ satisfying $z_{i+1}$ is the first atom of $[z_i,y]_{r\cup\{z_0,\dots,z_i\}}$ according to $\Omega$ for all $i$ such that $0\leq i \leq q-2$.
\end{definition}

Now we use an RFAS to identify chains that function the same way that descents do in a lexicographic shelling.

\begin{definition}\label{def:pseudo descent}
    Let $P$ be a finite, bounded poset and let $\Omega$ be an RFAS of $P$. For a maximal chain $m$ of $P$ given by $\zh=x_0\lessdot x_1\lessdot \dots \lessdot x_n=\oneh$, say that $x_i\lessdot x_{i+1}\lessdot x_{i+2}$ is a \textbf{pseudo descent} of $m$ if $x_{i+1}$ is not the first atom of $[x_i,x_{i+2}]_{m^{x_i}}$ according to $\Omega$. 
\end{definition} 

The following proposition shows that for an RFAS, any chain that does not behave like an ascending chain must contain a chain that behaves like a descent.

\begin{proposition}\label{prop: not first atom then have pseudo descent}
    Let $P$ be a finite, bounded poset and let $\Omega$ be an RFAS of $P$. Let $[x,y]_r$ be a rooted interval of $P$ and let $m$ be a maximal chain of $[x,y]_r$. If $m\neq c(r,x,y)$, then $m$ contains a pseudo descent. 
\end{proposition}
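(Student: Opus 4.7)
The plan is to prove the contrapositive by strong induction on the length $n$ of $m$: if $m : x = x_0 \lessdot x_1 \lessdot \cdots \lessdot x_n = y$ is a maximal chain of $[x,y]_r$ with no pseudo descent, then $m = c(r,x,y)$. The base cases $n=1$ (trivial) and $n=2$ (where absence of a pseudo descent at position $0$ directly forces $x_1 = \Omega(r,x,y)$) are immediate from the definitions.

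For the inductive step, assume $n \geq 3$ and consider the truncation $m' : x_1 \lessdot x_2 \lessdot \cdots \lessdot x_n$, which is a maximal chain of the rooted subinterval $[x_1, y]_{r \cup x_1}$. The restriction of $\Omega$ to this subinterval is an RFAS by \cref{rmk: restriction of rfas is rfas}. Any pseudo descent of $m'$ would occur at some position $i \geq 1$ of $m$, so $m'$ also has no pseudo descent; the induction hypothesis then yields $m' = c(r \cup x_1, x_1, y)$, and in particular $x_2$ is the first atom of $[x_1, y]_{r \cup x_1}$ according to $\Omega$. With this identification of $b := x_2$, I invoke condition (i) of \cref{def:RFAS} for $[x,y]_r$ with $a = x_1$. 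The absence of a pseudo descent of $m$ at position $0$ is precisely the statement that $x_1$ is the first atom of $[x_0, x_2]_r = [x, b]_r$; the backward direction of (i) then upgrades this to $x_1 = \Omega(r,x,y)$. Combined with $m' = c(r \cup x_1, x_1, y)$, this gives $m = c(r,x,y)$.

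The main (and fairly modest) subtlety is recognizing that the correct element $b$ needed to apply condition (i) is not given a priori but is supplied by the induction hypothesis as $b = x_2$, so that the no-pseudo-descent condition at position $0$ exactly fires condition (i) of the RFAS. It is worth noting that condition (ii) of \cref{def:RFAS} plays no role in this argument; its force is reserved for the shellability conclusion in \cref{thm:RFAS implies shellable}, as illustrated by \cref{ex:not shellable but RFASi}, where (i) holds but (ii) fails and the poset is not even shellable.
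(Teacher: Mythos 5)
Your proof is correct, and it reorganizes the paper's argument rather than reproducing it. The paper proves the statement directly, inducting on the length of the longest chain in $[x,y]$: it locates the unique element $x_k$ at which $m$ first departs from $c(r,x,y)$, notes that $x_{k+1}$ is therefore not the first atom of $[x_k,y]_{m^{x_k}}$, and then splits into two cases --- if $x_{k+2}$ is not the first atom of $[x_{k+1},y]_{m^{x_{k+1}}}$ it recurses into the upper interval $[x_{k+1},y]$ to find a pseudo descent higher up, and otherwise it applies the backward direction of condition (i) of \cref{def:RFAS} in contrapositive form to exhibit a pseudo descent at $x_k\lessdot x_{k+1}\lessdot x_{k+2}$. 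You instead prove the contrapositive, truncate $m$ at the bottom, use the induction hypothesis to identify the tail with $c(r\cup x_1,x_1,y)$ (which is what supplies $b=x_2$), and then fire the backward direction of (i) once at the bottom. The engine is identical --- the backward implication in condition (i), with condition (ii) playing no role in either version --- but your decomposition avoids both the case split and the need to single out the divergence point, at the cost of passing to the contrapositive; it is arguably the tidier of the two. One step worth making fully explicit in your write-up is the claim that $m'$ inherits the no-pseudo-descent hypothesis: a pseudo descent of $m'$ at position $i\ge 1$ is literally the same condition as a pseudo descent of $m$ at position $i$ because the relevant rooted interval $[x_i,x_{i+2}]_{m^{x_i}}$ is unchanged by the truncation, since the root of the truncated chain at $x_i$ is again $m^{x_i}$.
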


\begin{proof}
    Say that $m$ is given by $x=x_0\lessdot x_1 \lessdot \dots \lessdot x_{p}=y$ and $m\neq c(r,x,y)$. %Say $c(x,y,r)$ is given by $x=y_0\lessdot y_1 \lessdot \dots \lessdot y_{q}=y$. 
    We prove that $m$ contains a pseudo descent by induction on the length of of the longest chain in $[x,y]$. It holds vacuously if the longest chain in $P$ is length 1. Suppose $[x,y]$ has some chain longer than length 1. Since $m\neq c(r,x,y)$, there exists a unique $k$ with $0\leq k\leq p-2$ such that $x_k \in c(r,x,y)$ and $c(r,x,y)^{x_k}=m^{x_k}$ while $c(r,x,y)$ and $m$ contain different atoms of $[x_k,\oneh]$. By definition, $c(r,x,y)$ contains $\Omega(m^{x_k},x_k,\oneh)$ %= y_{k+1}$,  
    the first atom of $[x_k,\oneh]_{m^{x_k}}$, while $m$ contains $x_{k+1}\neq \Omega(m^{x_k},x_k,\oneh)$. So, $x_{k+1}$ is not the first atom of $[x_k,\oneh]_{m^{x_k}}$ according to $\Omega$. If $x_{k+2}$ is not the first atom of $[x_{k+1},\oneh]_{m^{x_{k+1}}}$ according to $\Omega$, then above $x_{k+1}$, $m$ is not $c(r\cup\{x_1,\dots,x_{k+1}\},x_{k+1},\oneh)$. Thus by induction, $m$ has a pseudo descent above $x_{k+1}$. If $x_{k+2}$ is the first atom of $[x_{k+1},\oneh]_{m^{x_{k+1}}}$ according to $\Omega$, then by \cref{def:RFAS} (i) the first atom $\Omega(m^{x_k},x_k,x_{k+2})$ of $[x_k,x_{k+2}]_{m^{x_k}}$ is not $x_{k+1}$. In this case, $x_k\lessdot x_{k+1}\lessdot x_{k+2}$ is a pseudo descent of $m$.
\end{proof}

On the other hand, the next proposition shows that in an RFAS, the chains that function as ascending chains do not contain any of the chains that behave like descents.

\begin{proposition}\label{prop:RFAS first atom chain no pseudo decents}
    Let $P$ be a finite, bounded poset and let $\Omega$ be an RFAS of $P$. For any rooted interval $[x,y]_r$, the first atom chain $c(r,x,y)$ does not contain a pseudo descent. 
\end{proposition}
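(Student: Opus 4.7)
The plan is to expand the definition of $c(r,x,y)$ and apply condition (i) of \cref{def:RFAS} directly to each consecutive triple in the first atom chain. Writing $c(r,x,y)$ as $x=z_0\lessdot z_1\lessdot\cdots\lessdot z_q=y$, if $q\leq 1$ there is no triple to check, so assume $q\geq 2$. For each $i$ with $0\leq i\leq q-2$, set $s_i = r\cup\{z_1,\dots,z_i\}$, a saturated chain from $\zh$ to $z_i$ that serves as the root of $[z_i,y]$ inherited from $r$ and $c(r,x,y)$; this agrees with $m^{z_i}$ when $c(r,x,y)$ is extended below by $r$ to a maximal chain of $P$, so it is the correct root for testing whether $z_i\lessdot z_{i+1}\lessdot z_{i+2}$ is a pseudo descent.

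By \cref{def: first atom chain from RFAS}, $z_{i+1}=\Omega(s_i,z_i,y)$ and $z_{i+2}=\Omega(s_i\cup\{z_{i+1}\},z_{i+1},y)$. I then apply condition (i) of \cref{def:RFAS} to the rooted interval $[z_i,y]_{s_i}$ with $a=z_{i+1}$ and $b=z_{i+2}$: the hypothesis that $b$ is the first atom of $[a,y]_{s_i\cup a}$ holds by the second equation, and then the forward direction of (i) yields that $a=z_{i+1}$ is the first atom of $[z_i,b]_{s_i}=[z_i,z_{i+2}]_{s_i}$. Hence $z_i\lessdot z_{i+1}\lessdot z_{i+2}$ is not a pseudo descent. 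Letting $i$ range over all valid indices shows that $c(r,x,y)$ contains no pseudo descents.

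The main difficulty, to the extent there is any, is just keeping straight the roots at each recursive step and recognizing that the hypotheses of condition (i) of an RFAS are fed correctly into every triple along $c(r,x,y)$. No deeper input is needed: condition (i) was designed precisely so that a locally first atom in an upper rooted interval remains the first atom in the two-step subinterval visible inside a first atom chain, which is exactly the content of the proposition.
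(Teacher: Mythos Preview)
Your proof is correct and follows exactly the approach the paper takes. The paper's proof is a single sentence---``This follows directly from the forward direction of condition (i) in \cref{def:RFAS}''---and you have simply unpacked that sentence by writing out the first atom chain, naming the roots $s_i$, and verifying that the hypotheses of condition (i) are satisfied at each consecutive triple.
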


\begin{proof}
    This follows directly from the forward direction of condition (i) in \cref{def:RFAS}.
\end{proof}

 Now we use these ascent-like (first atom chains) and descent-like (pseudo descents) structures to construct a total order on the maximal chains of a finite bounded poset $P$ with an RFAS $\Omega$. In particular, in the following definition and the next three propositions, we use the first atom chains and pseudo descents to define a partial order on the maximal chains of the poset, denoted $\mathcal{M}_{\Omega}$. Any linear extension of $\mathcal{M}_{\Omega}$ will be shown to give a shelling order in \cref{thm:RFAS implies shellable}. Note that this partial order  
 is in essence a maximal chain descent order (MCDO) as introduced in \cite{LacinaMCDO}. \cref{thm:RFAS implies shellable} shows that $\mathcal{M}_{\Omega}$ encodes shellings of $P$ the same way that an MCDO does.

\begin{definition}\label{def:max chain order from RFAS}
    Let $P$ be a finite, bounded poset with an RFAS $\Omega$. For two maximal chains $m$ and $m'$ of $P$, define a relation $m\to m'$ exactly when there exist $x\lessdot y\lessdot z$ such that $x$ and $z$ are both in $m$ and $m'$, $m$ and $m'$ agree below $x$ and above $z$, $x\lessdot y \lessdot z$ is a pseudo descent of $m'$, and $m$ between $x$ and $z$ is the first atom chain $c(m^x,x,z)$. Denote by $\preceq$ the reflexive and transitive closure of all relations of the form $m\to m'$. This relation together with the set of maximal chains of $P$ will be denoted $\mathcal{M}_{\Omega}$. 
\end{definition}

We will show $M_{\Omega}$ is a partial order in \cref{prop:compatible max chain order from RFAS}. The next two propositions are technical results necessary for the proof of \cref{prop:compatible max chain order from RFAS}.

\begin{proposition}\label{prop:exists path to first atom chain}
    Let $P$ be a finite, bounded poset with an RFAS $\Omega$. Suppose $m$ is a maximal chain of $P$ that is not the first atom chain $c(\{\zh\},\zh,\oneh)$. Then there exists a sequence of maximal chains $c(\{\zh\},\zh,\oneh) \to m_1\to m_2\to \dots \to m$.
\end{proposition}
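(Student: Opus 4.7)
The plan is to prove this by strong induction on the length $n$ of the longest chain in $P$, with the case $n=1$ being vacuous since $P$ then has a unique maximal chain. For the inductive step, assume the statement for all finite bounded posets with shorter longest chain, and let $m: \zh = x_0 \lessdot x_1 \lessdot \cdots \lessdot x_n = \oneh$ be a maximal chain with $m \neq c_0 := c(\{\zh\},\zh,\oneh)$. Set $a' := \Omega(\{\zh\},\zh,\oneh)$, the first atom of $P$.

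In the easy case $x_1 = a'$, both $m$ and $c_0$ begin with $\zh \lessdot a'$, so I would pass to the shorter rooted sub-poset $[a', \oneh]_{\{\zh, a'\}}$, which inherits an RFAS by \cref{rmk: restriction of rfas is rfas}. The inductive hypothesis applied there yields a sequence of arrows from $c(\{\zh, a'\}, a', \oneh)$ to $m|_{[a', \oneh]}$ in the sub-poset, and this sequence lifts to $P$ by prepending $\zh \lessdot a'$ to each chain, since no arrow touches any element below $a'$.

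The substantive case is $x_1 = a \neq a'$. The plan is to construct an intermediate chain $m^*$ starting with $\zh \lessdot a'$ along with a sequence of arrows $m^* \to \cdots \to m$; the easy case applied to $m^*$ then connects it to $c_0$. To build $m^*$ and the interpolating sequence, I would invoke condition (ii) of \cref{def:RFAS} on $[\zh, \oneh]_{\{\zh\}}$ with the non-first atom $a$: let $b$ be the first atom of $[a, \oneh]_{\{\zh, a\}}$, let $a_p$ be the first atom of $[\zh, b]$, and let $a' = a_1, a_2, \ldots, a_p$ together with $b_1, \ldots, b_{p-1}$ be the atoms and bridging elements produced by condition (ii). First, the inductive hypothesis applied in $[a, \oneh]_{\{\zh, a\}}$ connects $m$ by arrows (after prepending $\zh \lessdot a$) to the chain $m^{(0)}$ that agrees with $m$ through $a$ and then follows the first atom chain of $[a, \oneh]_{\{\zh, a\}}$; in particular, $m^{(0)}$ begins $\zh \lessdot a \lessdot b$. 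Second, I would produce a chain $\tilde n_p$ beginning with $\zh \lessdot a_p \lessdot b$ and agreeing with $m^{(0)}$ above $b$, so that $\tilde n_p \to m^{(0)}$ holds because $\zh \lessdot a \lessdot b$ is a pseudo descent of $m^{(0)}$ (by condition (i) of \cref{def:RFAS}: since $a$ is not first in $[\zh, \oneh]$, it is not first in $[\zh, b]$). Third, for $i$ decreasing from $p$ to $1$, I would build chains $\tilde n_i$ beginning with $\zh \lessdot a_i \lessdot b_i$ (taking $b_p := b$), linked by a zig-zag of arrows alternating between bottom-swap arrows (pseudo descents $\zh \lessdot a_{i+1} \lessdot b_i$, valid because $a_i$, not $a_{i+1}$, is first in $[\zh, b_i]$ by condition (ii)) and second-level swap arrows (pseudo descents above $a_{i+1}$ that exchange the bridging element $b_i$ for $b_{i+1}$, justified by condition (ii)'s first-atom properties). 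Setting $m^* := \tilde n_1$ gives a chain starting with $\zh \lessdot a' \lessdot b_1$, completing the construction.

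The main obstacle is the coherent definition of the intermediate chains $\tilde n_i$: the arrow relation requires any two linked chains to agree outside the three-element interval where a pseudo descent is being fixed, so the portions of these chains above the bridging elements must be consistently chosen as the second cover relation changes along the zig-zag. Condition (ii) of \cref{def:RFAS} provides precisely the structural input that makes this coherence possible, since each $b_i$ is simultaneously first in a particular rooted upper interval above $a_i$ and a common upper cover of $a_i$ and $a_{i+1}$, allowing the inductive hypothesis to be applied within the shorter sub-posets $[b_i, \oneh]$ to align the tails of consecutive $\tilde n_i$.
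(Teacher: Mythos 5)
Your overall skeleton matches the paper's proof: induction on the length of the longest chain, the easy case where $x_1$ is the first atom handled by lifting the inductive hypothesis from $[x_1,\oneh]$, and otherwise a walk along the zig-zag $a_1,\dots,a_p$, $b_1,\dots,b_{p-1}$, $b$ supplied by condition (ii), alternating bottom swaps with transitions that change the element covering the current atom. The bottom-swap arrows are justified correctly. The gap is in your ``second-level swap arrows.'' You assert that the passage from a chain beginning $\zh\lessdot a_{i+1}\lessdot b_{i+1}$ to one beginning $\zh\lessdot a_{i+1}\lessdot b_i$ is a \emph{single} arrow arising from a pseudo descent above $a_{i+1}$. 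But the relation of \cref{def:max chain order from RFAS} only replaces a pseudo descent $x\lessdot y\lessdot z$ by the first atom chain of $[x,z]$, so one such arrow could exchange $b_{i+1}$ for $b_i$ only if some $z$ covering $b_{i+1}$ in the chain had $b_i$ on the first atom chain of $[a_{i+1},z]$. Condition (ii) supplies no such $z$: it only makes $b_i$ the first atom of the full interval $[a_{i+1},\oneh]$, and $b_i$ and $b_{i+1}$ need not even have a common cover. Your proposed repair --- applying the inductive hypothesis in $[b_i,\oneh]$ to ``align tails'' --- targets the wrong interval: nothing done above $b_i$ can change which element covers $a_{i+1}$.

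The correct move, which is what the paper does, is to apply the full inductive hypothesis in $[a_{i+1},\oneh]$ (a bounded poset with strictly shorter longest chain carrying the restricted RFAS by \cref{rmk: restriction of rfas is rfas}): it yields an entire sequence of arrows from $\zh\cup c(\{\zh,a_{i+1}\},a_{i+1},\oneh)$ to the current chain, and the source of that sequence automatically contains $b_i$ because $b_i$ is the first atom of $[a_{i+1},\oneh]_{\{\zh,a_{i+1}\}}$; a single bottom swap then replaces $\zh\lessdot a_{i+1}\lessdot b_i$ by $c(\{\zh\},\zh,b_i)$, which contains $a_i$. This also dissolves the ``coherence'' worry in your final paragraph: since the inductive hypothesis connects the first atom chain of $[a_{i+1},\oneh]$ to an \emph{arbitrary} maximal chain of that interval, no advance consistency between the tails of consecutive $\tilde n_i$ is needed. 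With the second-level transitions replaced in this way, your argument becomes the paper's.
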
 

\begin{proof}
Let $m$ be a maximal chain of $P$ given by $\zh=x_0\lessdot x_1\lessdot \dots \lessdot x_s=\oneh$ such that $m$ is not the first atom chain $c(\{\zh\},\zh,\oneh)$. We show there is a sequence of maximal chains $c(\{\zh\},\zh,\oneh) \to m_1\to m_2\to \dots \to m$ by induction on the length of the longest chain in $P$. If the length of the longest chain in $P$ is 1, this is vacuously true. If the length of the longest chain in $P$ is 2, this is clear by \cref{prop: not first atom then have pseudo descent}. Suppose that for some $n\geq 3$, whenever the length of the longest chain of $P$ is at most $n-1$, there is a sequence of maximal chains $c(\{\zh\},\zh,\oneh) \to m_1\to m_2\to \dots \to m$. 

Say the length of the longest maximal chain in $P$ is $n$. Since the longest maximal chain of $[x_1,\oneh]$ has length less than $n$, by induction there is a sequence of maximal chains $\zh \cup c(\{\zh,x_1\},x_1,\oneh)\to m_1'\to m_2' \to \dots \to m$. Now there are two cases: either $x_1$ is the first atom in $[\zh,\oneh]$ according to $\Omega$ or not. Suppose $x_1$ is the first atom in $[\zh,\oneh]$ according to $\Omega$. Then $\zh \cup c(\{\zh,x_1\},x_1,\oneh) = c(\{\zh\},\zh,\oneh)$, so we have the desired sequence of maximal chains. 

Suppose that $x_1$ is not the first atom in $[\zh,\oneh]$ according to $\Omega$. Let $b$ be the first atom of $[x_1,\oneh]_{\{\zh,x_1\}}$, that is, $b$ is the element of $c(\{\zh,x_1\},x_1,\oneh)$ covering $x_1$. Then by \cref{def:RFAS} (ii), there exist atoms $a_1,\dots, a_p$ of $P$ and elements $b_1,\dots, b_{p-1}$ such that $a_p$ is the first atom of $[x,b]_r$, $a_1$ is the first atom of $[\zh,\oneh]$ according to $\Omega$, $b_i$ is the first atom of $[a_{i+1},y]_{r\cup a_i}$, and $a_{i}$ is the first atom of $[x,b_i]_r$ for all $i$ with $1 \leq i\leq p-1$. Then we have $c(\{\zh\},\zh,b)\cup c(\{\zh,x_1\},x_1,\oneh)_b  
\to \zh \cup c(\{\zh,x_1\},x_1,\oneh)$ and $c(\{\zh\},\zh,b)$ contains $a_p$. Then again by induction we have a sequence of maximal chains $\zh\cup c(\{\zh,a_p\},a_p,\oneh)\to \tilde{m_1}\to \tilde{m_2}\to \dots \to c(\{\zh\},\zh,b)\cup c(\{\zh,x_1\},x_1,\oneh)_b 
$. Note that $\zh\cup c(\{\zh,a_p\},a_p,\oneh)$ contains $b_{p-1}$, so we can repeat this process with $b_{p-1}$ and $a_{p-1}$ and so on until reaching $\zh \cup c(\{\zh,a_1\},a_1,\oneh)$ which is the first atom chain $c(\{\zh\},\zh,\oneh)$. Together, this gives a sequence of maximal chains $c(\{\zh\},\zh,\oneh) \to m_1\to m_2\to \dots \to m$. This shows that there exists such a sequence which begins with $c(\{\zh\},\zh,\oneh)$.
\end{proof}

\begin{proposition}\label{prop:all paths lead to first atom chain}
    Let $P$ be a finite, bounded poset with an RFAS $\Omega$. Then any sequence of maximal chains obtained by replacing pseudo descents in maximal chains of $P$ with the corresponding first atom chains must eventually reach $c(\{\zh\},\zh,\oneh)$. 
\end{proposition}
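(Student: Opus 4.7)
By Proposition \ref{prop: not first atom then have pseudo descent}, the first atom chain $c(\{\zh\},\zh,\oneh)$ is the unique maximal chain of $P$ containing no pseudo descent; hence any terminal chain of the replacement process must equal $c(\{\zh\},\zh,\oneh)$, and it suffices to show that every such sequence terminates. Because $P$ has only finitely many maximal chains, termination reduces to showing that the replacement process is acyclic.

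The plan is to establish acyclicity by constructing a well-founded measure $\mu$ on the set of maximal chains of $P$ that strictly decreases under each replacement, with the construction proceeding by induction on the length of the longest chain in $P$. For a maximal chain $m = \zh \lessdot x_1 \lessdot \cdots \lessdot x_n$, I would take $\mu(m)$ to be a lexicographically compared pair whose first coordinate records the rank of the atom $x_1$ in a hierarchy on the atoms of $[\zh,\oneh]$ derived from condition (ii) of Definition \ref{def:RFAS} (with $\Omega(\{\zh\},\zh,\oneh)$ assigned the smallest rank via the distinguished sequence $a_1 = \Omega(\{\zh\},\zh,\oneh), a_2, \ldots, a_p = x_1$ guaranteed by that condition), and whose second coordinate is $\mu$ recursively applied to the tail of $m$ viewed as a maximal chain of the strictly shorter rooted interval $[x_1,\oneh]_{\{\zh,x_1\}}$, using the restricted RFAS guaranteed by Remark \ref{rmk: restriction of rfas is rfas}. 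The base cases of longest chain length at most $2$ are immediate.

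Condition (i) of Definition \ref{def:RFAS} is central to verifying monotonicity of $\mu$ for replacements away from the initial cover: if $x_{j+1}$ and $x_{j+2}$ are both first atoms of the corresponding rooted upper intervals to $\oneh$, then condition (i) (applied with $a = x_{j+1}$, $y = \oneh$, so that $b = \Omega(m^{x_{j+1}}, x_{j+1}, \oneh) = x_{j+2}$) forces $x_{j+1}$ to be the first atom of $[x_j, x_{j+2}]_{m^{x_j}}$, which precludes any pseudo descent strictly within a segment of $m$ that matches $c(\{\zh\},\zh,\oneh)$. Consequently every pseudo descent occurs at or above the first deviation point from the first atom chain, and for $j \geq 1$ the replacement falls inside the tail rooted interval $[x_1,\oneh]_{\{\zh,x_1\}}$, where the inductive hypothesis gives the desired strict decrease in the second coordinate of $\mu$.

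The main obstacle is handling a pseudo descent at the initial cover ($j=0$), where the replacement changes $x_1$ itself and the first coordinate of $\mu$ must be shown to strictly decrease. Here condition (ii) of Definition \ref{def:RFAS} is essential: one must argue that the new atom $x_1' = \Omega(\{\zh\}, \zh, x_2)$ obtained after replacement occupies a strictly smaller rank in the atom hierarchy than $x_1$. Organizing the hierarchy on atoms carefully, so that condition (ii) witnesses such a strict rank decrease under every position-zero replacement, is the technical heart of the argument; once this is done, lexicographic comparison of $\mu$ forbids cycles, which combined with the reduction above completes the proof.
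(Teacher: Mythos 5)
There is a genuine gap, and it sits exactly where you defer the ``technical heart'': no hierarchy on the atoms of $[\zh,\oneh]$ can strictly decrease under every position-zero replacement, so the leading coordinate of your measure $\mu$ cannot be defined. Concretely, take the poset $P$ of \cref{fig: why we need def} (bottom $\zh$, atoms $a_1,a_2$, elements $w_1,w_2$ each covering both atoms, top $\oneh$) with the first atom set $\Omega(\{\zh\},\zh,\oneh)=a_1$, $\Omega(\{\zh\},\zh,w_1)=a_2$, $\Omega(\{\zh\},\zh,w_2)=a_1$, and $\Omega(\{\zh,a_i\},a_i,\oneh)=w_2$ for $i=1,2$. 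One checks directly that this satisfies both conditions of \cref{def:RFAS} (unlike the assignments $\mathcal{C},\mathcal{C}'$ of \cref{ex: why we need def}); for instance, for the atom $a_2$ of $[\zh,\oneh]$ one has $b=w_2$ and $a_2$ is first in neither $[\zh,\oneh]$ nor $[\zh,w_2]$, as (i) requires. Now one admissible replacement sequence is $\zh\, a_1\, w_1\, \oneh \Rightarrow \zh\, a_2\, w_1\, \oneh \Rightarrow \zh\, a_2\, w_2\, \oneh \Rightarrow \zh\, a_1\, w_2\, \oneh = c(\{\zh\},\zh,\oneh)$. The first and third steps are both position-zero replacements, sending the atom $a_1\mapsto a_2$ and then $a_2\mapsto a_1$, so any rank function decreasing strictly at both would need $\mathrm{rank}(a_2)<\mathrm{rank}(a_1)<\mathrm{rank}(a_2)$. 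The underlying point is that condition (ii) only guarantees that from each atom \emph{some} chain of position-zero moves reaches $\Omega(\{\zh\},\zh,\oneh)$ (this is precisely what \cref{prop:exists path to first atom chain} extracts from it); it does not make the set of \emph{all} position-zero moves acyclic on atoms, and the example shows it genuinely is not: within a single terminating sequence the bottom atom can leave and return. So termination cannot be certified by any lexicographic measure whose leading term depends only on the atom of the chain.

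The outer shell of your argument is fine --- the reduction to acyclicity via \cref{prop: not first atom then have pseudo descent} and \cref{prop:RFAS first atom chain no pseudo decents}, and the handling of replacements at positions $j\ge 1$ by induction inside $[x_1,\oneh]$ --- but the decreasing quantity has to be global rather than built atom-by-atom. The paper's proof instead inducts on the length of the longest sequence $c(\{\zh\},\zh,\oneh)\to m_1\to\dots\to m$ furnished by \cref{prop:exists path to first atom chain}: if $m'\to m$, then any such sequence ending at $m'$ extends by one step to a sequence ending at $m$, so the longest sequence reaching $m'$ is strictly shorter than the longest one reaching $m$, and every replacement step strictly decreases this global statistic. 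Replacing your first coordinate by a quantity of that flavor (or adopting the paper's induction outright) is what the argument needs.
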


\begin{proof}
First, note that every maximal chain $m$ has some sequence of maximal chains $c(\{\zh\},\zh,\oneh) \to m_1\to m_2\to \dots \to m_k = m$ by \cref{prop:exists path to first atom chain}. Let $k$ be maximal among such sequences. We will show the desired result by induction on $k$, that is, on the length of the longest such sequence. We first show the base case of $k=1$. We do this by showing that if $c(\{\zh\},\zh,\oneh) \to m$ is the longest such sequence, then $m$ has exactly one pseudo descent. Suppose $x\lessdot y\lessdot z$ is the pseudo descent of $m$ which gives $c(\{\zh\},\zh,\oneh) \to m$, so $y$ is not in $c(\{\zh\},\zh,\oneh)$. Thus, we have $m^x=c(\{\zh\},\zh,x)$. Then by 
\cref{prop:RFAS first atom chain no pseudo decents}, $m^x$ has no pseudo descents below $x$. Assume seeking contradiction that $m$ has a pseudo descent $x'\lessdot y' \lessdot z'$ with $y\leq x'$. Let $m'$ be the maximal chain obtained from $m$ by replacing the pseudo descent above $y$ with the corresponding first atom chain. Then we have $m'\to m$ while $m'\neq c(\{\zh\},\zh,\oneh)$ since $m'$ contains $y$. By \cref{prop:exists path to first atom chain}, there is a sequence of maximal chains $c(\{\zh\},\zh,\oneh)\to m_1'\to m_2'\to \dots \to m'$. But this contradicts that the longest sequence from $c(\{\zh\},\zh,\oneh)$ to $m$ has length one, since $c(\{\zh\},\zh,\oneh)\to m_1'\to m_2'\to \dots \to m' \to m$ has length at least two. Similarly, assuming that $m$ contains a pseudo descent $x'\lessdot x\lessdot y$, contradicts the fact that the longest sequence from $c(\{\zh\},\zh,\oneh)$ to $m$ has length one. Hence, $m$ has a single pseudo descent which means that $c(\{\zh\},\zh,\oneh)\to m$ is the only such sequence for $m$ proving the base case.

Now suppose that for some $n\geq 2$, whenever a maximal chain $m$ has $k<n$, any sequence of maximal chains obtained by beginning with $m$ and replacing pseudo descents with the corresponding first atom chains must eventually reach $c(\{\zh\},\zh,\oneh)$. Say $m$ is a maximal chain with $k=n$. Since $m\neq c(\{\zh\},\zh,\oneh)$, $m$ has at least one pseudo descent by \cref{prop: not first atom then have pseudo descent}. For any maximal chain $m'$ such that $m'\to m$, that is, for any maximal chain obtained from $m$ by replacing a pseudo descent with the corresponding first atom chain, the longest sequence $c(\{\zh\},\zh,\oneh)\to \tilde{m_1}\to \tilde{m_2}\to \dots \to m' $ is at most length $n-1$. Otherwise, there would be such a sequence from $c(\{\zh\},\zh,\oneh)$ to $m$ of length at least $n+1$. Then by the induction hypothesis, any sequence of maximal chains obtained by beginning with $m'$ and replacing pseudo descents with the corresponding first atom chains must eventually reach $c(\{\zh\},\zh,\oneh)$. Then since $m'$ was obtained from an arbitrary pseudo descent of $m$, any sequence of maximal chains obtained by beginning with $m$ and replacing pseudo descents with the corresponding first atom chains must eventually reach $c(\{\zh\},\zh,\oneh)$. 
\end{proof}

\begin{proposition}\label{prop:compatible max chain order from RFAS}
Let $P$ be a finite, bounded poset with an RFAS $\Omega$. Let $\mathcal{M}_{\Omega}$ be as defined in \cref{def:max chain order from RFAS}. Then $\mathcal{M}_{\Omega}$ is a partial order and has a unique minimal element given by the first atom chain $c(\{\zh\},\zh,\oneh)$.    
\end{proposition}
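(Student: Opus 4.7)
The relation $\preceq$ is reflexive and transitive by construction, so the main task is to establish antisymmetry; the unique minimum will then follow easily. My plan is to introduce a rank function on maximal chains that strictly increases along the arrow relation $\to$, which will immediately rule out any non-trivial cycles in $\preceq$.

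Specifically, I would define $\rho(m)$ to be the length of the longest sequence $c(\{\zh\},\zh,\oneh) \to m_1 \to \cdots \to m_k = m$ of arrows terminating at $m$. By \cref{prop:exists path to first atom chain}, such a sequence exists for every maximal chain $m$, so $\rho(m)$ is a well-defined nonnegative integer. Finiteness of $\rho(m)$ is forced because $P$ has only finitely many maximal chains and (as shown below) the members of such a sequence are pairwise distinct. The key observation is that whenever $m \to m'$, we may extend any sequence of length $\rho(m)$ ending at $m$ by the single arrow $m \to m'$ to obtain a sequence of length $\rho(m)+1$ ending at $m'$, so $\rho(m') \geq \rho(m)+1$. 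Note here that $m \neq m'$ whenever $m \to m'$, because the pseudo descent in $m'$ is replaced by the first atom chain of the corresponding subinterval, which by \cref{prop:RFAS first atom chain no pseudo decents} has no pseudo descent and is therefore different from the pseudo descent it replaces.

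Strict increase of $\rho$ along arrows immediately precludes directed cycles in $\to$. So if $m \preceq m'$ and $m' \preceq m$ with $m \neq m'$, concatenating the two corresponding sequences of arrows would yield a sequence $m \to \cdots \to m' \to \cdots \to m$, forcing $\rho(m) < \rho(m)$, a contradiction. Hence $\preceq$ is antisymmetric, and $\mathcal{M}_\Omega$ is a partial order.

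Finally, \cref{prop:exists path to first atom chain} directly yields $c(\{\zh\},\zh,\oneh) \preceq m$ for every maximal chain $m$ of $P$, so $c(\{\zh\},\zh,\oneh)$ is a minimum, and uniqueness follows from antisymmetry. I do not anticipate any real obstacle here, as the substantial work has already been done in \cref{prop:exists path to first atom chain} and \cref{prop:all paths lead to first atom chain}; the rank-function argument simply packages their content into a proof of antisymmetry.
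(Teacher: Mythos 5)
There is a genuine gap, and it sits exactly at the point you wave past: the well-definedness of $\rho$. You define $\rho(m)$ as the length of the \emph{longest} sequence $c(\{\zh\},\zh,\oneh)\to m_1\to\cdots\to m$, and justify finiteness by saying $P$ has finitely many maximal chains and ``the members of such a sequence are pairwise distinct.'' But what you actually establish below is only that \emph{consecutive} members are distinct ($m\neq m'$ whenever $m\to m'$), which does not rule out a sequence such as $m_1\to m_2\to m_1\to m_2\to\cdots$. Pairwise distinctness of the members of an arbitrary arrow-sequence is equivalent to acyclicity of $\to$, which in turn is equivalent to the antisymmetry you are trying to prove; so the finiteness of $\rho$ presupposes the conclusion. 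Without acyclicity in hand, $\rho$ may be $+\infty$ and the rank-function argument collapses. (Defining $\rho$ via longest \emph{simple} paths does not rescue it, since then the step $\rho(m')\geq\rho(m)+1$ fails.)

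The paper closes this gap by using \cref{prop:all paths lead to first atom chain} directly: if there were a cycle $m_1\to\cdots\to m_k=m_1$, then $c(\{\zh\},\zh,\oneh)$ cannot appear on it (it has no pseudo descents, so it is never the target of an arrow), and traversing the cycle backwards forever yields an infinite sequence of pseudo-descent replacements that never reaches $c(\{\zh\},\zh,\oneh)$, contradicting that proposition. You should either adopt this argument or, equivalently, first deduce acyclicity of $\to$ from \cref{prop:all paths lead to first atom chain} --- but once acyclicity is known, antisymmetry of the reflexive--transitive closure is immediate and the rank function is superfluous. Your final paragraph on the unique minimum via \cref{prop:exists path to first atom chain} is fine and matches the paper.
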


\begin{proof}
To show that $\preceq$ is a partial order, it suffices to show that $\preceq$ is antisymmetric. We prove this by showing that no sequence of maximal chains $m_1\to m_2\to \dots \to m$ of at least length one has $m_1=m$. Recall that $c(\{\zh\},\zh,\oneh)$ contains no pseudo descents by \cref{prop:RFAS first atom chain no pseudo decents}. Thus, $m_i\neq c(\{\zh\},\zh,\oneh)$ for any $i$ such that $1<i$. Assume that there is some sequence $m_1\to m_2\to \dots \to m$ of at least length one that has $m_1=m$. Then $c(\{\zh\},\zh,\oneh)$ is not one of the chains in this sequence and the sequence can be repeated forever without reaching $c(\{\zh\},\zh,\oneh)$. This contradicts \cref{prop:all paths lead to first atom chain}. Hence, $\preceq$ is antisymmetric and thus is a partial order. Moreover, \cref{prop:exists path to first atom chain} implies that $c(\{\zh\},\zh,\oneh) \preceq m$ for all maximal chains $m$ of $P$, so $c(\{\zh\},\zh,\oneh)$ is the unique minimal element with respect to $\preceq$.
\end{proof}

Now we come to the main result of this section, namely that a finite bounded poset that admits an RFAS is shellable. The proof of this theorem essentially follows the structure of Bj{\"o}rner and Wachs' proof of lexicographic shellability in \cite{non-pure1}.

\begin{theorem}\label{thm:RFAS implies shellable}
    Suppose finite, bounded poset $P$ admits an RFAS $\Omega$. Let $M_{\Omega}$ be as defined in \cref{def:max chain order from RFAS}. Then any linear extension of $M_{\Omega}$ gives a shelling of the order complex $\Delta(P)$.
\end{theorem}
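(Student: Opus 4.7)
The approach will be to verify the shelling condition directly for any linear extension $\leq_L$ of $M_\Omega$, using the pseudo descents of each maximal chain to identify the required codimension-1 ``restriction'' faces. For each maximal chain $m$ of $P$ other than $c(\{\zh\}, \zh, \oneh)$, I set $\mathcal{R}(m) := \{y \in m : y \text{ is the middle of a pseudo descent of } m\}$, so that each $y \in \mathcal{R}(m)$ gives a codimension-1 face $m \setminus \{y\}$ of $m$. The chain $c(\{\zh\},\zh,\oneh)$ is the unique minimum of $M_\Omega$ by \cref{prop:compatible max chain order from RFAS}, so it appears first in $\leq_L$ and requires no verification.

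The easy direction is as follows. For each $y \in \mathcal{R}(m)$ associated to a pseudo descent $x \lessdot y \lessdot z$, the chain $m_y$ obtained from $m$ by replacing $x \lessdot y \lessdot z$ with $c(m^x, x, z)$ satisfies $m_y \to m$ by \cref{def:max chain order from RFAS}. Thus $m_y \preceq m$ in $M_\Omega$ and so $m_y <_L m$, and $m_y$ clearly contains $m \setminus \{y\}$. This realizes $m \setminus \{y\}$ inside an earlier facet.

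The main obstacle will be the converse: showing that for every earlier chain $m' <_L m$ one can locate some $y \in \mathcal{R}(m)$ with $y \notin m'$, so that $m \cap m' \subseteq m \setminus \{y\}$. The plan is to examine pairs of consecutive elements $a < b$ in the (set-theoretic) intersection $m \cap m'$. In each such ``gap'' $[a,b]$, the restrictions $m|_{[a,b]}$ and $m'|_{[a,b]}$ are distinct maximal chains of $[a,b]$ that meet only at $a$ and $b$. I will argue by contradiction that some gap must satisfy $m|_{[a,b]} \neq c(m^a, a, b)$: if instead $m|_{[a,b]}$ coincides with the first atom chain in every gap, then $m'|_{[a,b]}$ is distinct from that first atom chain in every gap and therefore admits a pseudo descent by \cref{prop: not first atom then have pseudo descent}. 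Iterating pseudo descent replacements inside each gap and applying \cref{prop:all paths lead to first atom chain} to the restricted RFAS on $[a,b]_{m^a}$ (which is an RFAS by \cref{rmk: restriction of rfas is rfas}) forces termination at $c(m^a, a, b) = m|_{[a,b]}$. Concatenating these replacement sequences across all gaps produces a chain of $\to$-relations witnessing $m \preceq m'$, which gives $m <_L m'$ and contradicts $m' <_L m$.

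Once such a gap $[a,b]$ with $m|_{[a,b]} \neq c(m^a, a, b)$ is in hand, one more application of \cref{prop: not first atom then have pseudo descent} yields a pseudo descent $u \lessdot v \lessdot w$ of $m$ inside $[a,b]$ with $v \in (a,b)$. Since $a, b$ are consecutive in $m \cap m'$, no element of $(a,b)$ lies in $m'$; in particular $v \notin m'$. Therefore $v \in \mathcal{R}(m)$ and $m \cap m' \subseteq m \setminus \{v\}$, completing the verification of the shelling condition.
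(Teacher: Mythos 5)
Your overall strategy is the paper's: use pseudo descents as the restriction faces, reduce to the ``gaps'' where $m$ and an earlier chain $m'$ disagree, and use \cref{rmk: restriction of rfas is rfas} together with the minimality of first atom chains (\cref{prop:compatible max chain order from RFAS}, \cref{prop:exists path to first atom chain}, \cref{prop:all paths lead to first atom chain}) to show that $m$ must fail to be a first atom chain in some gap. The easy direction is fine, as is the final step (a pseudo descent of $m$ whose middle $v$ lies in the open interior of a gap has $v \notin m'$).

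The step that is not yet justified is the concatenation in your contradiction argument. The first atom chain $c(r,a,b)$ depends on the root $r$, and the relation $\to$ of \cref{def:max chain order from RFAS} tests the segment being replaced against the first atom chain with respect to the root supplied by the \emph{current} maximal chain. List the gaps as $[a_1,b_1],[a_2,b_2],\dots$ in increasing order and suppose you rewrite $m$ into $m'$ gap by gap from the bottom up. After the first gap has been rewritten, the chain in hand agrees with $m'$ below $a_2$, so its root at $a_2$ is $m'^{a_2}$ rather than $m^{a_2}$; your hypothesis only gives $m|_{[a_2,b_2]}=c(m^{a_2},a_2,b_2)$, and this segment need not be the first atom chain of $[a_2,b_2]$ rooted at $m'^{a_2}$. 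So the next batch of $\to$ relations is not licensed and the chain of relations witnessing $m\preceq m'$ breaks. The repair is to process the gaps from the top down, so that the portion of the current chain below the gap being rewritten is always that of $m$ and the relevant roots never change; the paper instead avoids the issue by inducting on the number of gaps and always rewriting the \emph{earlier} chain on the lowest gap to agree with the later one there (pseudo descents of the later chain are intrinsic to it, so they survive that substitution). With the ordering of the concatenation made explicit, your argument goes through and is essentially the paper's.
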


\begin{proof}
    Let the total order on maximal chains $\Gamma:m_1,\dots, m_t$ be a linear extension of $\preceq$. Consider maximal chains $m_i$ and $m_j$ such that $i<j$. We will find a maximal chain $m_{i'}$ such that $i'<j$ and $m_i\cap m_j\subseteq m_{i'}\cap m_j$ with $m_{i'}\cap m_j=m_j\setminus\{y\}$ for some $y\in m_j$. This then shows that $\Gamma$ gives a shelling order of $\Delta(P)$. 
    
    We first identify all of the intervals in which $m_i$ and $m_j$ disagree. Say that $[u,v]$ is an interval where $m_i$ and $m_j$ differ when $u$ and $v$ are contained in both $m_i$ and $m_j$ but $m_i$ and $m_j$ share no common elements in $(u,v)$. Let $s$ be the number of intervals where $m_i$ and $m_j$ differ. 
    We show by induction on $s$ that $m_j$ contains a pseudo descent in at least one of the intervals where $m_i$ and $m_j$ differ. 
    Suppose $s=1$ and the one interval where $m_i$ and $m_j$ disagree is $[u,v]$. 
    Assume seeking contradiction that $m_j$ does not contain a pseudo descent in the interval $[u,v]$. Then $m_j$ restricted to $[u,v]$ is the first atom chain of $[u,v]_{m_j^{u}}$. Then by \cref{rmk: restriction of rfas is rfas} and \cref{prop:compatible max chain order from RFAS}, we have $m_j\prec m_i$. But this contradicts that $\Gamma$ is a linear extension of $\preceq$ since $m_i$ precedes $m_j$ in $\Gamma$. Thus, $m_j$ contains a pseudo descent in the interval $[u,v]$. Now suppose $s>1$. Let $[u,v]$ be the the first interval on which $m_i$ and $m_j$ differ in the sense that $m_i$ and $m_j$ agree everywhere below $u$. If $m_j$ contains a pseudo descent in $[u,v]$, then we are done. Otherwise $m_j$ restricted to $[u,v]$ is the first atom chain of $[u,v]_{m_j^{u}}$. Let $d'$ be the portion of $m_i$ above $v$. Then again by \cref{rmk: restriction of rfas is rfas} and \cref{prop:compatible max chain order from RFAS}, we have $m_j^{v}\cup d'\prec m_i$. This implies that $m_j^{v}\cup d'$ precedes $m_i$ in $\Gamma$, so $m_j^{v}\cup d'$ precedes $m_j$ in $\Gamma$. Further, $m_j^{v}\cup d'$ and $m_j$ disagree in only $s-1$ intervals and those intervals are among the intervals in which $m_i$ and $m_j$ differ.  
    Thus, by induction $m_j$ has a pseudo descent in an interval where $m_i$ and $m_j$ disagree.

    Now let $[u,v]$ be an interval where $m_i$ and $m_j$ differ such that $m_j$ has a pseudo descent in $[u,v]$. Let $x\lessdot y\lessdot z$ be this pseudo descent of $m_j$ in $[u,v]$. Let $c(m_j^{x},x,z)$ be the first atom chain of $[x,z]_{m_j^{x}}$ and let $d$ be the portion of $m_j$ above $z$. Then again by \cref{rmk: restriction of rfas is rfas} and \cref{prop:compatible max chain order from RFAS}, we have $m_j^{x} \cup c(m_j^{x},x,z) \cup d \prec m_j$. Thus, $m_j^{x} \cup c(m_j^{x},x,z) \cup d = m_{i'}$ for some $i'<j$ since $\Gamma$ is a linear extension of $\preceq$. Finally, observe that $m_i\cap m_j\subseteq m_{i'}\cap m_j$ with $m_{i'}\cap m_j=m_j\setminus\{y\}$ by construction. Therefore, $\Gamma$ gives a shelling of $\Delta(P)$.
\end{proof}

\begin{remark}
    The proof of \cref{thm:RFAS implies shellable} also shows that the restriction map of any of these linear extension shellings from an RFAS is given by the pseudo descents of the RFAS. This is the analog of the fact that in a lexicographic shelling, the restriction map is given by the (topological) descents.
\end{remark}

\section{Labeling-Compatible Recursive First Atom Sets (LCRFAS) and Equivalence to TCL-Shellability}\label{sec:lcrfas}

In this section, we add a condition to the definition of an RFAS, producing what we call labeling-compatible recursive first atom sets (LCRFAS) in \cref{def:LCRFAS}. In \cref{thm: tcl iff RFAS}, we show that posets have sets of atoms satisfying this definition if and only if they are TCL-shellable. We also provide in \cref{ex:RFAS not LCRFAS} a poset with an RFAS for which there is no labeling ``compatible" with the given RFAS. This shows that in \cref{thm: tcl iff RFAS}, an LCRFAS cannot be replaced with an RFAS.

\begin{definition}\label{def:LCRFAS}
    The finite bounded poset $P$ admits a \textbf{labeling compatible recursive first atom set (LCRFAS)} if $P$ admits an RFAS $\Omega$ (as in \cref{def:RFAS}) and $\Omega$ satisfies the following condition on the partial order $\preceq$ on the maximal chains of $P$ induced by $\Omega$ (as in \cref{def:max chain order from RFAS}). 
    \begin{itemize}
        \item [(LC)]  There exists a linear extension $\Gamma:m_1,\dots, m_t$ of $\preceq$ such that for maximal chains $m_i, m_j$, and $m_k$ with $i < j < k$, whenever $m_i$ and $m_k$ contain $r\cup\{x \lessdot y \lessdot z\}$, and $m_j$ contains $r \cup\{x \lessdot y'\}$, then $y'=y$  
        where $x, y, y', z$ are in $P$ and $r$ is a root of $x$ in $P$.
    \end{itemize}
    
\end{definition}

The following lemma is one direction of \cref{thm: tcl iff RFAS}, that is, a poset that is TCL-shellable admits an LCRFAS. The main idea is to let the first atom in a rooted interval be the atom contained in the topologically ascending chain of the rooted interval. This is similar in spirit to the main idea of Bj{\"o}rner and Wachs' proof that a poset is CL-shellable if and only if it admits an RAO.

\begin{lemma} \label{lem:TCL implies RFAS} 
If a finite, bounded poset $P$ admits a TCL-labeling, then $P$ admits an LCRFAS.
\end{lemma}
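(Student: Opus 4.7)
The plan is to construct the LCRFAS directly from the TCL-labeling $\lambda$ by declaring the first atom of each rooted interval to be the atom lying on its topologically ascending maximal chain. By \cref{rmk:lin ext of label set preserves tcl} I may assume $\lambda$ takes integer values. I fix a total order $\Gamma$ on the maximal chains of $P$ consistent with the lex order from $\lambda$, apply \cref{def:cc label from tcl} to obtain the CC-labeling $\lambda'$ (a CC-labeling by \cref{lem:lex cc from tcl gives shelling}), and use that the topological descents of $\lambda'$ agree with those of $\lambda$ by \cref{rmk:cc and tcl are the same}. Define $\Omega(r, x, y)$ to be the atom of $[x, y]_r$ on its unique topologically ascending chain with respect to $\lambda$ (equivalently $\lambda'$).

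For condition (i) of \cref{def:RFAS}, I would use that restricting the topologically ascending chain of $[x, y]_r$ to any subinterval $[x, z]_r$ with $z$ on this chain again yields its topologically ascending chain, and conversely, concatenating the topologically ascending chain of $[x, b]_r$ (which, if $a$ is its first atom, ends $a \lessdot b$) with the topologically ascending chain of $[a, y]_{r \cup a}$ (which begins $a \lessdot b$ by hypothesis) produces a chain consisting entirely of topological ascents in $[x, y]_r$; by uniqueness, this is the topologically ascending chain there. For condition (ii), starting from $\alpha_0 := a$ and $\beta_0 := b$, I would iterate $\alpha_{k+1} := \Omega(r, x, \beta_k)$ and $\beta_{k+1} := \Omega(r \cup \alpha_{k+1}, \alpha_{k+1}, y)$. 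The heart of the argument is to show that the lex-first maximal chain of $[x, y]_r$ containing $\alpha_k$ strictly decreases (with respect to $\Gamma$) as $k$ grows: the topologically ascending chain of $[x, \beta_k]_r$ passes through $\alpha_{k+1}$ rather than $\alpha_k$, and the resulting lex inequality propagates via the topologically ascending chain of $[\alpha_{k+1}, y]_{r \cup \alpha_{k+1}}$ (which begins with $\beta_{k+1}$) to yield strictly lex-earlier labels at $\alpha_{k+1}$ than at $\alpha_k$. A strictly decreasing finite sequence must terminate, and by (i) the only fixed point of the iteration $\alpha \mapsto \Omega(r, x, \Omega(r \cup \alpha, \alpha, y))$ is $a'$; hence the sequence necessarily reaches $a'$, and setting $a_p := \alpha_1, \ldots, a_1 := \alpha_p = a'$ and $b_i := \beta_{p-i}$ satisfies all the required conditions.

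For the LC condition I would take $\Gamma$ itself as the witness linear extension. That $\Gamma$ extends $\preceq$ holds because each generating relation $m \to m'$ of \cref{def:max chain order from RFAS} replaces a topological descent of $m'$ by the topologically ascending chain, yielding a lex-earlier chain. For the sandwich condition, suppose for contradiction that $m_i, m_k$ both contain $r \cup \{x \lessdot y \lessdot z\}$ with $i < k$ and that $m_j$, with $i < j < k$, contains $r \cup \{x \lessdot y'\}$ for some $y' \neq y$. Lex-betweenness forces both $\lambda'(r, x, y) = \lambda'(r, x, y')$ and $\lambda'(r \cup y, y, z) = \lambda'(r \cup y', y', y'')$, where $y''$ is the element above $y'$ in $m_j$. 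By the recursive structure of \cref{def:cc label from tcl}, each such label value $c$ is the index of a specific chain $m_c \in \Gamma$ containing the respective root together with a specific atom above its bottom; this forces $m_c$ to contain both $r \cup y$ and $r \cup y'$, contradicting $y \neq y'$.

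The main obstacle is condition (ii): the recursive construction and its termination at the specified atom $a'$ require a careful monotonicity argument based on strict decrease of chain indices in $\Gamma$, which must be developed with care in the non-graded setting and uses the prefix-free property of the CC-labeling $\lambda'$. Conditions (i) and (LC), by contrast, follow more directly from the nesting of topologically ascending chains and the chain-index interpretation of labels in $\lambda'$.
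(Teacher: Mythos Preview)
Your approach is essentially the same as the paper's: define $\Omega(r,x,y)$ to be the atom on the topologically ascending chain of $[x,y]_r$, verify (i) via the nesting/concatenation of ascending chains, verify (ii) via the iteration $\alpha_{k+1}=\Omega(r,x,\beta_k)$, $\beta_{k+1}=\Omega(r\cup\alpha_{k+1},\alpha_{k+1},y)$ together with a strict-decrease argument on the lex-first chain through $\alpha_k$, and verify (LC) by exhibiting a suitable linear extension and deriving a contradiction from \cref{prop:same cc relable label seqs}(ii).

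One small imprecision in your (LC) step: you invoke ``lex-betweenness'' to conclude $\lambda'(r\cup y,y,z)=\lambda'(r\cup y',y',y'')$, but your $\Gamma$ was chosen consistent with the lex order from $\lambda$, not from $\lambda'$, so \cref{prop:labels of out and back roots} gives you $\lambda$-equalities rather than $\lambda'$-equalities directly. The paper handles this by passing to a (possibly different) linear extension of $\preceq$ that is compatible with the lex order induced by the CC-labeling $\lambda'$; then \cref{prop:labels of out and back roots} applies to $\lambda'$ and the contradiction with \cref{prop:same cc relable label seqs}(ii) goes through cleanly. This is a minor adjustment to your argument, not a structural issue.
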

\begin{proof}

Suppose that the finite, bounded poset $P$ admits a TCL-labeling $\lambda$. By \cref{thm: tcl iff cc}, $P$ admits a CC-labeling $\lambda'$ as in \cref{def:cc label from tcl}. Note that $\lambda'$ takes values in the integers by definition. Define $\Omega$ as follows: for each rooted interval $[x,y]_r$, set $\Omega(r,x,y) = u$ where $u$ is the atom of $[x,y]$ contained in the unique topologically ascending chain of $[x,y]_r$ with respect to $\lambda'$. We show that $\Omega$ is an LCRFAS (satisfies \cref{def:LCRFAS}). 

Consider any atom $a$ of $[x,y]_r$. Suppose $\Omega(r\cup a,a,y)= b$, that is, $b$ is the atom of $[a,y]_{r\cup a}$ contained in the topologically ascending chain $c$ of $[a,y]_{r\cup a}$ with respect to $\lambda'$. First we show that \cref{def:RFAS} (i) holds by considering the following two cases: either $a$ is the first atom of $[x,y]_r$ or $a$ is not the first atom of $[x,y]_r$.

First suppose that $a$ is the first atom of $[x,y]_r$ according to $\Omega$. Since $b$ is the atom of $[a,y]_{r\cup u}$ contained in the topologically ascending chain $c$ of $[a,y]_{r\cup u}$ with respect to $\lambda'$ and $a$ is the atom contained in the topologically ascending chain of $[x,y]_{r}$ with respect to $\lambda'$, $a\cup c$ is the topologically ascending chain of $[x,y]_{r}$. Thus, $x\lessdot a \lessdot b$ is a topological ascent with respect to $r$ and $\lambda'$, so $a=\Omega(r,x,b)$.

Next suppose that $a$ is not the first atom of $[x,y]_r$ with respect $\Omega$. 
Observe that $x\cup c$ is not the topologically ascending chain of $[x,y]_r$ because $a$ is in $c$. Thus, $x\cup c$ must have a topological descent with respect to $\lambda'$. Since $c$ is topologically ascending in $[a,y]_{r\cup a}$, $x\lessdot a \lessdot b$ must be the topological descent in $x\cup c$ with respect to $r$ and 
$\lambda'$. Let $d$ be the topologically ascending chain of $[x,b]_r$ with respect to $\lambda'$ and let $u'$ be the atom of $[x,b]$ in $d$. Then $\Omega(r,x,b)=u'$ by definition of $\Omega$ and $u'\neq a$. 

Next we show that \cref{def:RFAS} (ii) holds. Let us suppose that $a$ is not the first atom of $[x,y]_r$ and $a'$ is the first atom of $[x,y]_r$.
Then, as previously observed, $x\lessdot a \lessdot b$ must be a topological descent with respect to $r$ and $\lambda'$. Let $a_p$ be the first atom of $[x,b]_r$ and $b_{p-1}$ be the first atom $[a_p,y]_{r\cup a_p}$. 
If $a_p\neq a'$, then we can repeat the same argument with $a_p$ and $b_{p-1}$ to produce $a_{p-1}$ and $b_{p-2}$ where $a_{p-1}$ is the first atom in $[x,b_{p-1}]_{r}$ and $b_{p-2}$ is the first atom in $[a_{p-1},y]_{r\cup a_{p-1}}$. If $a_{p-1}\neq a'$, then we repeat this process to produce $a_{p-2}$ and $b_{p-3}$, and so on. Notice that for any $i \leq p$, 
the lexicographically first chain in $[x,y]_r$ with respect to $\lambda'$ containing $a_{i-1}$ and $b_{i-2}$ comes strictly earlier lexicographically than the lexicographically first chain in $[x,y]_r$ containing $a_i$ and $b_{i-1}$. 
This process must eventually reach $a'$ since $a'$ is contained in the lexicographically first chain in $[x,y]_r$. Thus, \cref{def:RFAS} (ii) holds and $\Omega$ is an RFAS of $P$.

Lastly we show that condition (LC) of \cref{def:LCRFAS} holds so that $\Omega$ is an LCRFAS. Let $\preceq$ be the partial order on the maximal chains of $P$ induced by $\Omega$ as in \cref{def:max chain order from RFAS}. Observe that any total order $\Gamma:m_1,
\dots, m_t$ on the maximal chains of $P$ that is compatible with the lexicographic order induced by $\lambda'$ is a linear extension of $\preceq$. Assume seeking contradiction that condition (LC) of \cref{def:LCRFAS} does not hold. Then there are maximal chains $m_i, m_j, m_k$ in $\Gamma$ with $i < j < k$ where $m_i$ and $m_k$ contain $r \cup \{x \lessdot y \lessdot z\}$, while $m_j$ contains $r \cup \{x \lessdot y' \lessdot z'\}$ where $y \neq y'$. Then by \cref{prop:labels of out and back roots}, $\lambda'(r,x, y)=\lambda'(r,x, y')$ and $\lambda'(r\cup y, y, z)=\lambda'(r\cup y', y', z')$. However, this contradicts \cref{prop:same cc relable label seqs} (ii) since $y\neq y'$. Therefore, $\Omega$ is an LCRFAS.
\end{proof}

Now we precisely define what we mean when we say that a CE-labeling is ``compatible" with an RFAS. This definition and the following lemma are necessary for the opposite direction of \cref{thm: tcl iff RFAS}. We show in the subsequent, quite technical lemma that a poset that admits an LCRFAS also admits a CE-labeling that is compatible with the LCRFAS. It is then somewhat straightforward to prove that a CE-labeling that is compatible with an RFAS is a TCL-labeling, which we do in the proof of \cref{thm: tcl iff RFAS}.

\begin{definition} \label{def:compatible RFAS}
    Let $P$ be a finite, bounded poset. Let $\lambda$ be a CE-labeling on $P$ and let $\Omega$ be an RFAS for $P$. We say $\lambda$ is \textbf{compatible} with $\Omega$ if the following holds for all
    rooted intervals $[x,y]_r$: whenever $\Omega$ gives $a$ as the first atom of $[x, y]_r$, $a$ is contained in a maximal chain in $[x, y]_r$ that has the lexicographically smallest label sequence with respect to $\lambda$.  
\end{definition}

\begin{figure}
\begin{center}
\begin{tikzpicture}[scale=0.75]

\node at (5,1){${\zh}$};

%RANK 1 NODES
\node at (5,3){${u}$};

%RANK 2 NODES
    \node at (2,4){${v'}$};

    \node at (8,4){${v}$};

%RANK 3 NODES
\node at (2,5.2){${w'}$};

%RANK 4 NODES

\node at (2,7){${w}$};

\node at (4,7){${\hat{w}}$};

\node at (6,7){${\tilde{w}}$};

\node at (8,7){};
\draw [fill] (8,7) circle [radius=0.05];

%Rank 5 NODES 
\node at (5,10){${\oneh}$};

%FIRST EDGES
\draw [dashed] (5,1.3)--(5,2.8);

%SECOND EDGES -- UP FROM u
\draw (4.8,3.1)--(2.1,3.8);
\draw (5.2,3.1)--(7.9,3.8);

%THIRD EDGES -- UP FROM v'
\draw (2,4.2)--(2,4.9);

%THIRD EDGES -- UP FROM v
\draw (7.8,4.2)--(2.2,6.8);
\draw (7.9,4.2)--(4.2,6.8);
\draw (8,4.2)--(6.1,6.8);
\draw (8,4.2)--(8,7);

%FOURTH EDGES -- UP FROM w'
\draw [dashed] (2,5.4)--(2,6.8);
\draw [dashed] (1.8,5.4) to [bend left=20] (1.2,7);
\draw [dashed] (1.2,7) to [bend left=20] node [align=center, minimum size=2cm, midway,  above=-2em, sloped] {\scalebox{1}{$\mathbf{c}(\mathbf{r}\cup \{{v'},{w'}\},{w'},{\hat{1}})$}} (4.8,9.8);

%FOURTH EDGES -- UP FROM w
\draw [dashed] (2.1,7.2)--(4.9,9.7);

%FOURTH EDGES -- UP FROM w hat
\draw [dashed] (4.1,7.2)--(5,9.7);

%FOURTH EDGES -- UP FROM w tilde
\draw[dashed] (6.2,7.2) to [bend right=20] (5,9.7);
\draw[dashed] (5.8,7.2) to [bend left=20] (5,9.7);

%FOURTH EDGES -- UP FROM mh dot
\draw [dashed] (8,7)--(5.2,9.8);

%Label chain names
\node at (4.8,2) {$\mathbf{r}$};
\node at (3.1,4.6) {\scalebox{1}{$\mathbf{c}(\mathbf{r},u,w)$}};
\node at (1,5.7) {$\mathbf{m_j}$};
\node at (2.6,8) {$\mathbf{d}$};
\node at (3.1,6.6) {$\mathbf{m}$};
\node at (5.1,6.6) {$\mathbf{m'}$};
\node at (5.1,7.5) {$\mathbf{m_i}$};
\node at (6.6,7.5) {$\mathbf{m_k}$};
\node at (8.5,6.4) {$\mathbf{m_h}$};

\end{tikzpicture}

\caption{A schematic illustrating the argument in the proof of \cref{lem: RFAS has compatible labeling} in which $m_h,m_i,m_j,m',m_k,m$ is the relative order of the exhibited maximal chains. Chains are denoted with bold text, while elements are denoted with italic text.}
\label{fig:LCRFAS argument pic}
\end{center}
\end{figure}

\begin{lemma}\label{lem: RFAS has compatible labeling}
    Suppose finite, bounded poset $P$ admits an LCRFAS $\Omega$. Then $P$ admits a CE-labeling $\lambda$ such that $\lambda$ and $\Omega$ are compatible. 
\end{lemma}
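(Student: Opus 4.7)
The plan is to let $\lambda$ be the CE-labeling produced by \cref{def:cc label from tcl} applied to a linear extension $\Gamma: m_1, \ldots, m_t$ of $\preceq$ that satisfies condition~(LC), and then verify that this $\lambda$ is compatible with $\Omega$. The verification proceeds in three steps.

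First, I will show that for every rooted interval $[x,y]_r$, the earliest chain $m^*$ of $\Gamma$ containing both $r$ and $y$ restricts on $[x,y]$ to the first atom chain $c(r,x,y)$. If it did not, \cref{prop: not first atom then have pseudo descent} would supply a pseudo descent of $m^*$ inside $[x,y]$; replacing that pseudo descent with the corresponding first atom chain yields a maximal chain of $P$ that is strictly earlier in $\preceq$ (and hence in $\Gamma$) while still containing $r$ and $y$, contradicting the minimality of $m^*$. In particular, $m^*$ passes through the atom $\Omega(r,x,y)$.

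Second, I will establish the analog of \cref{prop:labels of out and back roots} for this $\lambda$: if a root $r$ lies in both $m_i$ and $m_k$ with $i \leq k$, then $\lambda(r)$ is an initial section of $\lambda(m_j)$ for every $i \leq j \leq k$. The proof is by induction on the length of $r$. In the inductive step, condition~(LC) supplies exactly the non-oscillation property required: whenever an intermediate chain $m_j$ takes a different cover than $r$ just above the top of an initial sub-root, the occurrences of that cover in $m_i$ and $m_k$ sandwich the divergent cover in the $\Gamma$-ordering, so the equality clause of \cref{def:cc label from tcl} forces the corresponding labels to match.

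Finally, I combine these two steps to conclude compatibility. For any chain $c$ of $[x,y]_r$ other than $c^* := m^* \cap [x,y]$, pick any extension $\tilde{c}$ of $c$ to a maximal chain of $P$, so that $\tilde{c}$ follows $m^*$ in $\Gamma$. By Step~2 applied to $r$, both $\lambda(m^*)$ and $\lambda(\tilde{c})$ share the initial segment $\lambda(r)$, reducing the comparison to the labels at the first cover where $c^*$ and $c$ diverge; a second application of Step~2 together with \cref{def:cc label from tcl}'s sandwich rule gives $\lambda(c^*) \preceq_{\mathrm{lex}} \lambda(c)$, and \cref{prop:same cc relable label seqs}(ii) rules out equality. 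Hence $\Omega(r,x,y) \in c^*$ lies in the lexicographically smallest chain of $[x,y]_r$, as required.

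The main obstacle I expect is Step~2: carefully extracting from condition~(LC) the exact sandwich configurations needed by \cref{def:cc label from tcl} is delicate, and closely parallels the relabeling analysis developed in the proof of \cref{lem:lex cc from tcl gives shelling}.
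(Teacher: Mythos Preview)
Your Step~1 is correct and is in fact used (in a special case) in the paper's own proof. Your Step~2 is also correct: using (LC) one shows inductively that any $m_j$ sandwiched between $m_i$ and $m_k$ must contain the penultimate element of $r$, and then the sandwich clause of \cref{def:cc label from tcl} forces the last label to match. So far so good.

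The gap is in Step~3, precisely at the point you gloss over as ``a second application of Step~2.'' At the divergence point $z$ (with root $r'$), the inequality $\lambda(r',z,a^*)\le \lambda(r',z,a)$ follows directly from the monotonicity built into \cref{def:cc label from tcl} (no Step~2 needed). The hard case is when equality holds: you then need $\lambda(r'\cup a^*,a^*,b^*)<\lambda(r'\cup a,a,b)$. But these two labels are computed relative to \emph{different} roots $r'\cup a^*$ and $r'\cup a$, so Step~2 (which only controls chains sandwiched by two chains through a \emph{common} root) gives no leverage here. This second-label comparison is exactly the crux of the paper's proof. The paper handles it only in the pseudo-descent case $[x,y]=[u,w]$ with $u\lessdot v\lessdot w$ (which suffices, since \cref{prop:exists path to first atom chain} reduces the general case to a chain of such comparisons): assuming the wrong inequality, it uses the $\preceq$-minimality of first-atom chains (\cref{prop:compatible max chain order from RFAS}) to locate the first chain $m_j$ through $r\cup\{v',w'\}$ strictly between a chain $m_h$ through $r\cup v$ and the first chain $m$ through $r\cup\{v,w\}$, and then carefully extracts a triple of chains violating (LC). That contradiction argument---not another invocation of Step~2---is the missing ingredient in your Step~3.
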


\begin{proof}
    Let $\preceq$ be the partial order on the maximal chains induced by $\Omega$ as in \cref{def:max chain order from RFAS} and let $\Gamma$ be a linear extension of $\preceq$ satisfying (LC) of \cref{def:LCRFAS}. Let $\lambda$ be the CE-labeling of $P$ induced by $\Gamma$ as in \cref{def:cc label from tcl}. We will show that $\lambda$ is compatible with $\Omega$. 
    
Say that $\Gamma$ is given by $m_1,\dots, m_t$. We show that $\lambda$ is compatible with $\Omega$ by first showing that if, according to $\Omega$, $u\lessdot v\lessdot w$ is a pseudo descent with respect to the root $r$, then the label sequence of the first atom chain $c(r,u,w)$ with respect to $r$ and $\lambda$ strictly lexicographically precedes the label sequence of $u\lessdot v \lessdot w$ with respect to $r$ and $\lambda$. We will then show that $\lambda$ and $\Omega$ are compatible for an arbitrary rooted interval $[x,y]_r$ using \cref{prop:exists path to first atom chain}. 

    Let $u\lessdot v\lessdot w$ be a pseudo descent with respect to the root $r$ and $\Omega$. Let $v'$ be the element of $c(r,u,w)$ covering $u$, that is, $v'$ is the first atom of $[u,w]_r$. Note that $v'\neq v$. Just as in the proof of \cref{lem:lex cc from tcl gives shelling}, \cref{prop:same cc relable label seqs} (i) implies that $\lambda(r,u,v')\leq \lambda(r,u,v)$. Also, let $w'$ be the element of $c(r,u,w)$ covering $v'$. 

    If $\lambda(r,u,v')< \lambda(r,u,v)$, then the label sequence of the first atom chain $c(r,u,w)$ with respect to $r$ and $\lambda$ 
    strictly lexicographically precedes the label sequence of $u\lessdot v \lessdot w$ with respect to $r$ and $\lambda$. 
    Suppose then that $\lambda(r,u,v') = \lambda(r,u,v)$. It suffices to show that $\lambda(r\cup v',v',w')<\lambda(r\cup v,v,w)$. By the proof of \cref{prop:same cc relable label seqs} (ii), $\lambda(r\cup v',v',w')\neq \lambda(r\cup v,v,w)$ since $v'\neq v$. Assume seeking contradiction that $\lambda(r\cup v,v,w)<\lambda(r\cup v',v',w')$. We will produce three chains that violate (LC) of \cref{def:LCRFAS}, which contradicts the fact that $\Omega$ is an LCRFAS. \cref{fig:LCRFAS argument pic} shows a schematic of these chains that can be followed for the remainder of the argument. 
    Let $m_j$ be the earliest chain in $\Gamma$ that contains $r\cup \{v',w'\}$. Observe that since $\Gamma$ is a linear extension of $\preceq$ and $c(r\cup \{v',w'\}, w',\oneh)$ is the unique minimal element of $\preceq$ restricted to $[w',\oneh]_{r\cup \{v',w'\}}$ (by \cref{prop:compatible max chain order from RFAS}), we have that the portion of $m_j$ above $w'$ is $c(r\cup \{v',w'\}, w',\oneh)$. Further observe that for the same reasons, any maximal chain containing $r\cup  \{v',w'\}$ 
    follows $m_j$ in $\Gamma$. We will produce two maximal chains $m_i$ and $m_k$ that both contain $u\lessdot v\lessdot \tilde{w}$ for some $\tilde{w}$ such that $i<j<k$.
    
    Recall that by \cref{def:cc label from tcl}, $\lambda(r\cup v,v,w)=h$ where $h$ is the position of some maximal chain $m_h$ in $\Gamma$ that contains $r\cup v$. Notice that the label of the cover relation above $v$ in $m_h$ is $h$, otherwise $\lambda(r\cup v,v,w)$ would be the position of some earlier chain in $\Gamma$ than $m_h$. Observe that since $m_j$ is the first chain in $\Gamma$ containing $r\cup \{v',w'\}$, \cref{rmk:first chain label} implies that $\lambda(r\cup v',v',w')\leq j$. Since we have assumed $\lambda(r\cup v,v,w)<\lambda(r\cup v',v',w')$, we have $h=\lambda(r\cup v,v,w)<\lambda(r\cup v',v',w')\leq j$. Thus $m_h$ precedes $m_j$ in $\Gamma$. Now let $m$ be the earliest chain in $\Gamma$ that contains $r\cup \{v,w\}$ and let $d$ be the portion of $m$ above $w$. Observe that since $r\cup c(r,u,w) \cup d$ contains $r\cup \{v',w'\}$, we have $m_j\preceq r\cup c(r,u,w) \cup d$. Then since $r\cup c(r,u,w) \cup d \to m$, we have that $m_j$ precedes $m$ in $\Gamma$. Notice that in $\Gamma$, $m_h$ is followed by $m_j$ which is followed by $m$. Furthermore, $r\cup v$ is contained in both $m_h$ and $m$ and the labels of the cover relations above $v$ in both $m_h$ and $m$ are $h$. 
    
    Now let $m'$ be the first maximal chain after $m_j$ in $\Gamma$ that contains both $r \cup v$ and an element $\hat{w}$ covering $v$ such that $\lambda(r \cup v, v, \hat{w})=h$. We know $m'$ exists because $m$ satisfies these conditions. We consider two cases: either $m'$ is not the first chain in 
    $\Gamma$ containing $r\cup \{v,\hat{w}\}$ or $m'$ is the first chain containing $r\cup \{v,\hat{w}\}$. If $m'$ is not the first chain in 
    $\Gamma$ containing $r\cup \{v,\hat{w}\}$, then the first chain $m_i$ containing $r\cup \{v,\hat{w}\}$ precedes $m_j$ in $\Gamma$ by choice of $m'$. Consider $m_i$, $m_j$, and $m_k=m'$. We have $i<j<k$ and these three chains violate (LC) of \cref{def:LCRFAS} since $m_i$ and $m_k=m'$ both contain $r\cup \{v,\hat{w}\}$ while $m_j$ contains $r\cup \{v',w'\}$ and $v\neq v'$.

    On the other hand, suppose $m'$ is the first chain in 
    $\Gamma$ containing $r\cup \{v,\hat{w}\}$. We will find new maximal chains $m_i$ and $m_k$ that together with $m_j$ violate (LC). Since $\lambda(r\cup v,v,\hat{w}) = \lambda(r\cup v,v,w)=h$ and $m'\neq m_h$,
    \cref{rmk:not labeled by your first chain} implies that there is an element $\tilde{w}$ that covers $v$ and satisfies the following properties: there exist maximal chains $m_i$ and $m_k$, both containing $r\cup \{v,\tilde{w}\}$, such that $m_{i}$ precedes $m'$ and $m'$ precedes $m_k$ in $\Gamma$. This means that $\lambda(r\cup v,v,\tilde{w})=\lambda(r\cup v,v,\hat{w}) = \lambda(r\cup v,v,w)=h$. By choice of $m'$, we have that $m_{i}$ precedes $m_j$ in $\Gamma$. We also have that $m_j$ precedes $m'$ which precedes $m_k$. So, we have $i<j<k$ and $m_i$, $m_j$, $m_k$ violate (LC) of \cref{def:LCRFAS} since $m_i$ and $m_k$ both contain $r\cup \{v,\tilde{w}\}$ while $m_j$ contains $r\cup \{v',w'\}$ and $v\neq v'$. Therefore, $\lambda(r\cup v',v',w')<\lambda(r\cup v,v,w)$.

    Now we show that $\lambda$ is compatible with $\Omega$. Suppose $m'$ is a maximal chain in the rooted interval $[x,y]_{r}$ that is not the first atom chain. By \cref{rmk: restriction of rfas is rfas} and \cref{prop:exists path to first atom chain}, there is a sequence $d_1,d_2,\dots, d_s$ of maximal chains in $[x,y]_{r}$ such that $r\cup c(r,x,y)\to r\cup d_1\to r\cup d_2\to \dots \to r\cup d_s \to r\cup m'$. Recall from \cref{def:max chain order from RFAS} that each step in the sequence corresponds to replacing a first atom chain with a single pseudo descent. By the argument above the label sequences strictly increase lexicographically at each step.
\end{proof}

We are now ready to prove the main result of the section. 

\begin{theorem}\label{thm: tcl iff RFAS}
  A bounded poset $P$ admits an LCRFAS if and only if $P$ is TCL-shellable. 
\end{theorem}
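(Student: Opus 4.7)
The forward direction is immediate from \cref{lem:TCL implies RFAS}, which already shows that any TCL-shellable poset admits an LCRFAS. For the backward direction, suppose $P$ admits an LCRFAS $\Omega$ with a linear extension $\Gamma$ of $\preceq$ witnessing condition (LC). By \cref{lem: RFAS has compatible labeling}, the CE-labeling $\lambda$ obtained from $\Gamma$ via \cref{def:cc label from tcl} is compatible with $\Omega$. The plan is to show that this $\lambda$ is in fact a TCL-labeling, with the first atom chain $c(r,x,y)$ of \cref{def: first atom chain from RFAS} serving as the unique topologically ascending maximal chain of every rooted interval $[x,y]_r$.

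To show $c(r,x,y)$ is topologically ascending, I would argue locally on each consecutive pair of cover relations $z_i \lessdot z_{i+1} \lessdot z_{i+2}$ appearing in $c(r,x,y)$. By repeated application of condition (i) of \cref{def:RFAS}, $z_{i+1}$ is the first atom not only of $[z_i,\oneh]_{r\cup\{z_0,\dots,z_i\}}$ but also of the sub-interval $[z_i, z_{i+2}]_{r\cup\{z_0,\dots,z_i\}}$. By compatibility, $z_{i+1}$ lies in a maximal chain of $[z_i, z_{i+2}]_{r\cup\{z_0,\dots,z_i\}}$ with lexicographically smallest label sequence; since there is only one such maximal chain through $z_{i+1}$, namely $z_i \lessdot z_{i+1} \lessdot z_{i+2}$, this is the lex-smallest chain there. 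Then \cref{prop:same cc relable label seqs}(ii) ensures the smallness is strict, which is exactly the defining condition of a topological ascent for $[z_i,z_{i+1}]_{r\cup\{z_0,\dots,z_i\}}, [z_{i+1},z_{i+2}]_{r\cup\{z_0,\dots,z_{i+1}\}}$.

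For uniqueness, let $m$ be any maximal chain of $[x,y]_r$ with $m\neq c(r,x,y)$. By \cref{prop: not first atom then have pseudo descent}, $m$ contains a pseudo descent $u \lessdot v \lessdot w$ with respect to $\Omega$. The final paragraph of the proof of \cref{lem: RFAS has compatible labeling} already records exactly what is needed: in any rooted interval, every non-first-atom chain has a label sequence strictly lexicographically greater than that of the first atom chain. Applying this to $[u,w]_{m^u}$, the chain $u \lessdot v \lessdot w$ does not attain the lex-smallest label sequence in $[u,w]_{m^u}$, so this pair of cover relations is a topological descent, forcing $m$ to fail topological ascendingness. Combining existence and uniqueness shows that $\lambda$ is a TCL-labeling of $P$.

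The heavy technical work has already been absorbed into \cref{lem: RFAS has compatible labeling}; the only step requiring care in the proof of the theorem itself is matching up the pseudo descents/first atoms of the RFAS with the topological descents/ascents of the labeling. The main potential obstacle is ensuring that the compatibility of $\lambda$ with $\Omega$ (which only specifies that the first atom sits in \emph{some} lex-smallest chain) transfers to the strict lex-smallest property needed to certify topological ascents at individual pairs of cover relations, but this is precisely handled by the fact that labelings built from \cref{def:cc label from tcl} give distinct label sequences to distinct chains in every rooted interval (\cref{prop:same cc relable label seqs}(ii)).
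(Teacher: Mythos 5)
Your proposal is correct and follows essentially the same route as the paper: both directions rest on \cref{lem:TCL implies RFAS} and \cref{lem: RFAS has compatible labeling}, with the first atom chain identified as the unique topologically ascending chain via \cref{prop: not first atom then have pseudo descent} and the strict lexicographic minimality established in the final paragraph of the proof of \cref{lem: RFAS has compatible labeling}, together with the distinctness of label sequences from \cref{prop:same cc relable label seqs}(ii). Your local, pair-by-pair verification of topological ascents in length-two subintervals is just a slightly more explicit rendering of the paper's appeal to the absence of pseudo descents in first atom chains.
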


\begin{proof} 
We begin by proving that if $P$ admits an LCRFAS $\Omega$, then $P$ is TCL-shellable. Let $\lambda$ be a CE-labeling of $P$ that is compatible (see Definition \ref{def:compatible RFAS}) with $\Omega$; such a labeling exists by Lemma \ref{lem: RFAS has compatible labeling}.

We have that $\lambda$ is a CE-labeling of $P$ by integers, and no two maximal chains have the same label sequence by \cref{prop:same cc relable label seqs} (ii). We must show that for each interval $[x, y]_r$, there is a unique, topologically ascending chain and it is lexicographically first. 

By the final paragraph of the proof of \cref{lem: RFAS has compatible labeling}, the first atom chain $c(r,x,y)$ has the lexicographically smallest label sequence with respect to $r$ and $\lambda$ among all maximal chains of $[x,y]_r$. Thus, $c(r,x,y)$ is topologically ascending with respect to $\lambda$. Further, this implies that every pseudo descent with respect to $\Omega$ (see \cref{def:pseudo descent}) is a topological descent with respect to $\lambda$. Now by \cref{prop: not first atom then have pseudo descent}, any maximal chain $m$ of $[x,y]_r$ that is distinct from $c(r,x,y)$ contains a pseudo descent, and so contains a topological descent with respect to $\lambda$. Hence, $c(r,x,y)$ is the unique topologically ascending chain in the rooted interval and has the lexicographically smallest label sequence. Therefore, $\lambda$ is a TCL-labeling.

Lemma \ref{lem:TCL implies RFAS} is the reverse direction.  
\end{proof}

\begin{remark}\label{rmk:only need compatible with RFAS to get TCL}
Observe that in the proof of \cref{thm: tcl iff RFAS} the only place where condition (LC) of an LCRFAS (\cref{def:LCRFAS}) is used is to produce a labeling that is compatible with the LCRFAS. Once we know there exists a labeling $\lambda$ that is compatible with the LCRFAS, only the RFAS conditions (\cref{def:RFAS}) are necessary to show that $\lambda$ is a TCL-labeling. In particular, only the RFAS conditions are necessary to show that every pseudo descent is a topological descent with respect to $\lambda$. Thus, any labeling that is compatible with an RFAS, in the sense of \cref{def:compatible RFAS}, is a TCL-labeling. However, without condition (LC), there may not exist a labeling that is compatible with an RFAS as the following \cref{ex:RFAS not LCRFAS} shows.
\end{remark}

\begin{example}\label{ex:RFAS not LCRFAS}
    Consider the poset $P$ shown in \cref{fig:RFAS not LCRFAS}. There is an RFAS $\Omega$ of $P$ for which there does not exist a labeling compatible with $\Omega$. This shows an LCRFAS rather than an RFAS is necessary in \cref{thm: tcl iff RFAS}. Define $\Omega$ as follows. In each rooted interval $[x,y]_r$, the first atom $\Omega(r,x,y)$ is the leftmost atom in the interval when $P$ is drawn as in \cref{fig:RFAS not LCRFAS} except for the following: $\Omega(\{\zh\},\zh,e)=b$, $\Omega(\{\zh\},\zh,i)=c$, $\Omega(\{\zh\},\zh,n)=b$, $\Omega(\{\zh,b\},b,j)=i$, $\Omega(\{\zh,c\},c,k)=g$, $\Omega(\{\zh,c\},c,\oneh)=g$, and $\Omega(r,f,\oneh)=k$ for both roots $r$ of $f$. 

    While it is somewhat tedious to verify that this is an RFAS, it is fairly straightforward to verify that the partial order $\preceq$ on the maximal chains of $P$ induced by $\Omega$ as in \cref{def:max chain order from RFAS} contains the chain $\{\zh,b,d,r,\oneh\}\to \dots \to \{\zh,c,i,k,\oneh\} \to \{\zh,c,i,j,\oneh\} \to \{\zh,b,i,j,\oneh\} \to \{\zh,b,d,j,\oneh\}$. Observe that $\Omega$ violates (LC) of \cref{def:LCRFAS} since any linear extension of $\preceq$ has some chain containing $\{\zh,b,d\}$ coming before a chain containing $\{\zh,c,i\}$ which comes before another chain containing $\{\zh,b,d\}$. Further, by \cref{rmk:only need compatible with RFAS to get TCL}, any CE-labeling $\lambda$ that is consistent with $\Omega$ is a TCL-labeling. Also, as observed in the proof of \cref{thm: tcl iff RFAS}, every pseudo descent with respect to $\Omega$ is a topological descent with respect to $\lambda$. Thus, the lexicographic order on maximal chains of $P$ induced by $\lambda$ is at least as fine as $\preceq$. Then by the proof of \cref{thm: tcl iff cc}, there is a CC-labeling $\lambda'$ of $P$ by integers obtained from $\lambda$ via \cref{def:cc label from tcl} such that the topological descents with respect to $\lambda'$ are the same as the topological descents with respect to $\lambda$. Thus, the lexicographic order induced by $\lambda'$ is also at least as fine as $\preceq$. Now by \cref{prop:labels of out and back roots}, the label sequences of $\{\zh,b,d\}$, $\{\zh,b,i\}$ and $\{\zh,c,i\}$ assigned by $\lambda'$ are the same. This contradicts that $\lambda'$ is a CC-labeling since $\{\zh,b,i\}$ and $\{\zh,c,i\}$ are the only maximal chains in the interval $[\zh,i]$, so $[\zh,i]$ does not have a unique topologically ascending maximal chain. (Notice that this also contradicts \cref{prop:same cc relable label seqs} (ii).) Therefore, there is no CE-labeling that is consistent with $\Omega$. This implies that (LC) of \cref{def:LCRFAS} is necessary for the existence of a CE-labeling that is compatible with an RFAS. 
    \end{example}

\begin{figure}
\begin{center}
\begin{tikzpicture}[scale=0.5]

\node at (5,0){$\mathbf{\zh}$}; 
%RANK 1 NODES
\node at (0,3){$\mathbf{a}$};

\node at (5,3){$\mathbf{b}$};

\node at (10,3){$\mathbf{c}$};

%RANK 2 NODES
    \node at (-2,6){$\mathbf{d}$};

    \node at (1,6){$\mathbf{e}$};

\node at (4,6){$\mathbf{f}$};

\node at (6,6){$\mathbf{g}$};

    \node at (9,6){$\mathbf{h}$};

\node at (12,6){$\mathbf{i}$};

%RANK 3 NODES
\node at (0,9){$\mathbf{r}$};

\node at (3,9){$\mathbf{k}$};

\node at (7,9){$\mathbf{n}$};

\node at (10,9){$\mathbf{j}$};

%RANK 4 NODES
\node at (5,12){$\mathbf{\oneh}$};

%FIRST EDGES

\draw (4.8,0.2)--(0,2.8);
\draw (5,0.3)--(5,2.8);
\draw (5.2,0.2)--(10,2.8);

%SECOND EDGES -- UP FROM A
\draw (-0.2,3.1)--(-2,5.8);
\draw (0,3.2)--(0.9,5.8);
%\draw (0.2,3.1)--(6,6);
\draw (0.2,3)--(8.8,5.9);

%SECOND EDGES -- UP FROM B
\draw (4.8,3)--(-1.9,5.8);
\draw (4.8,3.1)--(1.1,5.8);
\draw (5,3.2)--(4,5.7);
\draw (5.1,3.2)--(5.9,5.8);
\draw (5.2,3)--(11.9,5.9);

%SECOND EDGES -- UP FROM C
\draw (9.8,3)--(4.2,6);
\draw (9.8,3.1)--(6.2,5.9);
\draw (10,3.2)--(9.1,5.8);
\draw (10.2,3)--(12,5.8);

%THIRD EDGES -- UP FROM (0, 6)
\draw (-2,6.2)--(-0.2,8.9);
\draw (-1.8,6.1)--(2.8,8.9);
\draw (-1.8,6)--(9.8,9);

%THIRD EDGES -- UP FROM (2, 6)
\draw (1,6.2)--(2.9,8.8);
\draw (1.2,6.1)--(6.8,8.9);

%THIRD EDGES -- UP FROM (4, 6)
\draw (3.8,6.1)--(0.1,8.9);
\draw (4,6.3)--(3,8.8);

%THIRD EDGES -- UP FROM (6, 6)
\draw (5.8,6.1)--(3.1,8.8);
%\draw (6.2,6.1)--(9.8,8.8);

%THIRD EDGES -- UP FROM (8, 6)
\draw (8.8,6.1)--(3.2,8.8);
\draw (8.9,6.2)--(7.1,8.8);
\draw (9.1,6.2)--(10,8.7);

%THIRD EDGES -- UP FROM (10, 6)
\draw (11.8,6.1)--(3.2,8.9);
\draw (11.9,6.2)--(10.1,8.9);

%TOP EDGES
\draw (0.1,9.2)--(4.8,11.9);
\draw (3,9.2)--(4.9,11.7);
\draw (7,9.2)--(5.1,11.7);
\draw (9.9,9.2)--(5.2,11.9);

\end{tikzpicture}

\caption{A poset that admits an RFAS $\Omega$ such that there is no CE-labeling compatible with $\Omega$.}
\label{fig:RFAS not LCRFAS}
\end{center}
\end{figure}
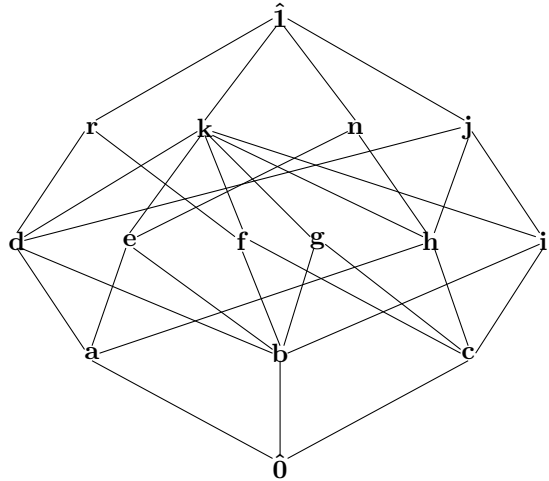

\section{Open Questions}\label{sec:open questions}
A natural question that arises in light of this work is whether there exist posets that admit RFASs but that do not admit LCRFASs. To this end, one may consider looking at the examples presented in \cite{vincewachsshellnonlexshell} and \cite{walkershellnonlexshell}, which are shellable but not CL-shellable and may or may not be CC-shellable. Another question that remains is whether there exists a graded poset that is CC-shellable but not EC-shellable. It is possible that the graded example presented in \cite{liclnotel} is one such example, but the method used there for proving this poset is not EL-shellable cannot be easily adapted, at least as far as we can see, to show that the poset is not EC-shellable.
        
%    Bibliographies can be prepared with BibTeX using amsplain,
%    amsalpha, or (for "historical" overviews) natbib style.
\bibliographystyle{amsplain}
%    Insert the bibliography data here.
\bibliography{ct-hershstadnyk}

\providecommand{\bysame}{\leavevmode\hbox to3em{\hrulefill}\thinspace}
\providecommand{\MR}{\relax\ifhmode\unskip\space\fi MR }
% \MRhref is called by the amsart/book/proc definition of \MR.
\providecommand{\MRhref}[2]{%
  \href{http://www.ams.org/mathscinet-getitem?mr=#1}{#2}
}
\providecommand{\href}[2]{#2}
\begin{thebibliography}{10}

\bibitem{bjornercm}
Anders Bj{\"{o}}rner, \emph{Shellable and {C}ohen-{M}acaulay partially ordered sets}, Trans. Amer. Math. Soc. \textbf{260} (1980), no.~1, 159--183.

\bibitem{Bj84}
\bysame, \emph{Posets, regular {CW} complexes and {B}ruhat order}, European J. Combin. \textbf{5} (1984), no.~1, 7--16.

\bibitem{lineardecistreestopboundsbjornerlovaszyao1992}
Anders Bj{\"o}rner, L{\'a}szl{\'o} Lov{\'a}sz, and Andrew Yao, \emph{Linear decision trees: volume estimates and topological bounds}, Proc. 24th ACM Symp. on Theory of Computing (May 1992), ACM Press , New York (1992), 170–177.

\bibitem{BW82}
Anders Bj{\"{o}}rner and Michelle Wachs, \emph{Bruhat order of {C}oxeter groups and shellability}, Adv. in Math. \textbf{43} (1982), no.~1, 87--100.

\bibitem{bw}
\bysame, \emph{On lexicographically shellable posets}, Trans. Amer. Math. Soc. \textbf{277} (1983), no.~1, 323--341.

\bibitem{non-pure1}
\bysame, \emph{Shellable nonpure complexes and posets. {I}}, Trans. Amer. Math. Soc. \textbf{348} (1996), no.~4, 1299--1327.

\bibitem{hersh}
Patricia Hersh, \emph{Lexicographic shellability for balanced complexes}, J. Algebraic Combin. \textbf{17} (2003), no.~3, 225--254.

\bibitem{HerStad}
Patricia Hersh and Grace Stadnyk, \emph{Generalized recursive atom ordering and equivalence to {CL}-shellability}, Comb. Theory \textbf{4} (2024), no.~1, Paper No. 18, 44. \MR{4770597}

\bibitem{kozlov}
Dmitry~N. Kozlov, \emph{General lexicographic shellability and orbit arrangements}, Ann. Comb. \textbf{1} (1997), no.~1, 67--90.

\bibitem{LacinaMCDO}
Stephen Lacina, \emph{Maximal chain descent orders}, arXiv: 2209.15142 (2022).

\bibitem{liraoindep}
Tiansi Li, \emph{E{L}-shelling on comodernistic lattices}, J. Combin. Theory Ser. A \textbf{177} (2021), Paper No. 105334, 8. \MR{4151554}

\bibitem{liclnotel}
\bysame, \emph{C{L}-shellable posets with no {EL}-shellings}, Proc. Amer. Math. Soc. \textbf{152} (2024), no.~5, 1821--1830. \MR{4728454}

\bibitem{shareshian2001}
John Shareshian, \emph{On the shellability of the order complex of the subgroup lattice of a finite group}, Transactions of the American Mathematical Society \textbf{353} (2001), no.~7, 2689–2703.

\bibitem{vincewachsshellnonlexshell}
Andrew Vince and Michelle Wachs, \emph{A shellable poset that is not lexicographically shellable}, Combinatorica \textbf{5} (1985), no.~3, 257--260. \MR{837069}

\bibitem{Wachs}
Michelle~L. Wachs, \emph{Poset topology: tools and applications}, Geometric combinatorics, IAS/Park City Math. Ser., vol.~13, Amer. Math. Soc., Providence, RI, 2007, pp.~497--615.

\bibitem{walkershellnonlexshell}
James~W. Walker, \emph{A poset which is shellable but not lexicographically shellable}, European J. Combin. \textbf{6} (1985), no.~3, 287--288. \MR{818604}

\end{thebibliography}
\end{document}